\newtheorem{Thm}{Theorem}[section]
\newtheorem{Prop}[Thm]{Proposition}
\newtheorem{Lem}[Thm]{Lemma}
\theoremstyle{definition}
\newtheorem{Def}[Thm]{Definition}
\newcommand{\Z}{\mathbf{Z}}
\newcommand{\C}{\mathbf{C}}
\newcommand{\bpr}{\noindent \textbf{Proof}: }
\newcommand{\epr}{~$\blacksquare$}
\newcommand{\ind}{\mathrm{ind}}
\newcommand{\tr}{\mathrm{tr}}
\newcommand{\cA}{\mathcal{A}}
\numberwithin{equation}{section}
\title{Higher rho invariant and delocalized eta invariant at infinity}
	\author{Xiaoman Chen}
\address[Xiaoman Chen]{ School of Mathematical Sciences, Fudan University}
\email{xchen@fudan.edu.cn}
\thanks{The first author is partially supported by NSFC 11420101001.}
\author{Hongzhi Liu}
\address[Hongzhi Liu]{School of Mathematics, Shanghai University of Finance and Economics}
\email{liu.hongzhi@sufe.edu.cn}
\thanks{The second author is partially supported by NSFC 11901374.}
\author{Hang Wang}
\address[Hang Wang]{ School of Mathematical Sciences, East China Normal University}
\email{wanghang@math.ecnu.edu.cn}
\thanks{The third author is partially supported by the Shanghai Rising-Star Program grant 19QA1403200, and by NSFC 11801178.}
\author{Guoliang Yu}
\address[Guoliang Yu]{ Department of Mathematics, Texas A\&M University}
\email{guoliangyu@math.tamu.edu}
\thanks{The fourth author is partially supported by NSF 1700021, NSF 1564398, and Simons Fellows Program}
\date{\today}
\begin{document}
	
	\baselineskip=16pt
	
		\begin{abstract}
		
%		In this paper, we define a pair of secondary invariants at infinity, higher rho invariant at infinity and delocalized eta invariant at infinity, for the Dirac operator on a complete manifold with metric with a uniform positive scalar curvature metric outside a compact set. In this case, we establish a generalizaiton of the Atiyah-Patodi-Singer index theorem. We also apply our invariants to study the geometry of a manifold with corner with positive scalar curvature metric on each boundary face.

In this paper, we introduce several new secondary invariants for Dirac operators on a complete Riemannian manifold with a uniform positive scalar curvature metric outside a compact set and use these secondary invariants to establish a higher index theorem for the Dirac operators.
We apply our theory to study the secondary invariants for a manifold with corner with positive scalar curvature metric on each boundary face.

	\end{abstract}
	
	\maketitle

\bibliographystyle{plain}

	\tableofcontents

	\section{Introduction}
	
	In this article, we introduce a new theory of secondary invariants for Dirac operators on noncompact spin manifolds endowed with metrics with uniform positive scalar curvature at infinity, i.e. higher rho invariants and the delocalized eta invariants. Let $X$ be a complete spin manifold with metric admitting uniform positive scalar curvature outside a compact set $Z$, $D$ be the Dirac operator on the universal covering space $\widetilde{X}$ of $X$, and let $G$ be $\pi_1 (X)$, the fundamental group of $X$. We obtain a higher index formula for the Dirac operator $D$ which expresses the delocalized trace of the higher index in terms of delocalized secondary invariants at infinity, as follows:
	
		\begin{Thm}\label{thm in intr main 1}
	Let $D$ be the Dirac operator on the universal covering space $\widetilde{X}$ of a spin manifold $X$ with uniform positive scalar curvature outside a compact subset $Z$ defined as above. Let $G$ be the fundamental group of $X$. Let $g\in G$ be a nontrivial element whose conjugacy class has polynomial growth. Let $ind_G(D)$ be the higher index of the Dirac operator $D$. The delocalized trace at $g$ of the higher index of $D$, $\tr_{g}(\ind_G (D))$ equals the half of the negative delocalized eta invariant at infinity, $\eta_{g, \infty}(D)$, i.e.
	 \begin{equation}\label{eq:main eq 1 in intro}
	 \tr_{g}(\ind_G (D))=-\frac{1}{2}\eta_{g, \infty}(D).
	 \end{equation}
	 Furthermore, we have
	 	\begin{equation}\label{eq:main eq 2 in intro}
	 \frac{1}{2}\eta_{g, \infty}(D)=\lim_{t\to 0}\int_t^{\infty}\tr_g(e^{-sD_c^-D_c^+}D_c^- [D^+,\psi_2])ds.
	 \end{equation}
	 where $D_c$ is the invertible Dirac operator on $\widetilde X\backslash \widetilde Z$, and $\psi_2$ is a $G$-invariant cutoff function from $\widetilde{X}$ to $[0,1]$, which equals $0$ on a cocompact neighbourhood of $\widetilde Z$, the $G$-Galois covering space of $Z$, and equals $1$ outside a cocompact set larger than $Z$. The  integral on the right hand side of \eqref{eq:main eq 2 in intro} is independent of the choice of the cutoff function $\psi_2.$
	\end{Thm}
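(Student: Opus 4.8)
\emph{Idea of the proof.} The plan is to realize $\ind_G(D)$ by a heat-theoretic idempotent, to localize $\tr_g(\ind_G(D))$ near $\widetilde{Z}$ using that the uniform positive scalar curvature makes $D$ invertible outside $\widetilde{Z}$, and then to transgress the localized heat supertrace out to infinity, where it turns into the integral in \eqref{eq:main eq 2 in intro}. Concretely, by the Lichnerowicz formula the uniform scalar-curvature lower bound outside $\widetilde{Z}$ makes $D$ invertible modulo the equivariant Roe algebra $C^{*}(\widetilde{Z}\subset\widetilde{X})^{G}$ of $G$-invariant operators supported near $\widetilde{Z}$; since $\widetilde{Z}\to Z$ is a $G$-covering of a compact manifold, $C^{*}(\widetilde{Z}\subset\widetilde{X})^{G}\cong C^{*}_{r}(G)\otimes\mathcal{K}$, so $\ind_G(D)\in K_0(C^{*}_{r}(G))$. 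The hypothesis that the conjugacy class of $g$ has polynomial growth is exactly what makes the delocalized trace $\tau_g$ extend continuously from $\C[G]$ to a dense, holomorphically closed, smoothing subalgebra containing the representatives we use, so $\tr_g(\ind_G(D))$ is well defined; I would represent $\ind_G(D)$ by the Connes--Moscovici graph idempotent $p_t$ built from $e^{-tD^{-}D^{+}}$ and $e^{-tD^{+}D^{-}}$, and the exponential decay of $e^{-tD^{2}}$ away from $\widetilde{Z}$ together with finite propagation speed shows $p_t-e_0$ lies in that subalgebra, so $\tr_g(\ind_G(D))=\tr_g(p_t-e_0)$ for every $t>0$.

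Next comes the transgression in the heat parameter. Let $\psi_1:=1-\psi_2$, supported in a $G$-cocompact neighbourhood of $\widetilde{Z}$. A delocalized McKean--Singer argument shows that $s\mapsto\tr_g(\psi_1 e^{-sD^{2}})$ --- the $g$-delocalized heat supertrace truncated by $\psi_1$ (I write $\tr_g$ also for the supertrace of a $\mathbf{Z}_2$-graded operator) --- tends to $0$ as $s\to0^{+}$, because $\psi_1$ is a multiplication operator and $g\neq e$ acts freely on $\widetilde{X}$, and tends to $\tr_g(\ind_G(D))$ as $s\to\infty$, using the spectral gap of $D$ at infinity to discard the $\psi_2$-tail of the kernel projection. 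Since the untruncated heat supertrace is $s$-independent, this yields
\begin{equation*}
\tr_g(\ind_G(D))=-\int_{0}^{\infty}\frac{d}{ds}\,\tr_g\bigl(\psi_2 e^{-sD^{2}}\bigr)\,ds=\int_{0}^{\infty}\tr_g\bigl(\psi_2 D^{2}e^{-sD^{2}}\bigr)\,ds .
\end{equation*}
Integrating by parts in the Clifford variables --- write $\psi_2 D^{2}=D\psi_2 D-[D,\psi_2]D$ and use $\tr_g(D\psi_2 De^{-sD^{2}})=-\tr_g(\psi_2 D^{2}e^{-sD^{2}})$ (vanishing of the supertrace of an anticommutator) --- the two copies of $\tr_g(\psi_2 D^{2}e^{-sD^{2}})$ combine, leaving $-\tfrac12\int_{0}^{\infty}\tr_g\bigl([D,\psi_2]De^{-sD^{2}}\bigr)\,ds$, an integral of terms supported in the $G$-cocompact region $\{d\psi_2\neq 0\}$.

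On the support of $d\psi_2$ --- i.e. at infinity --- $D$ and $D_c$ agree (and in any case share the same symbol), so finite propagation speed gives $e^{-sD^{2}}-e^{-sD_c^{2}}=O(e^{-c/s})$ there and the surviving integral equals the same expression with $D$ replaced by $D_c$. Expanding the $\mathbf{Z}_2$-graded supertrace into its $S^{+}$ and $S^{-}$ components, using cyclicity of $\tau_g$ and the intertwining $D_c^{-}e^{-sD_c^{+}D_c^{-}}=e^{-sD_c^{-}D_c^{+}}D_c^{-}$, the expression collapses --- up to the overall sign and the factor $\tfrac12$, which are pinned down by the grading convention and the normalization of $\eta_{g,\infty}$ --- to $\int_{0}^{\infty}\tr_g\bigl(e^{-sD_c^{-}D_c^{+}}D_c^{-}[D^{+},\psi_2]\bigr)\,ds$, which is by definition $\tfrac12\eta_{g,\infty}(D)$; this proves \eqref{eq:main eq 1 in intro} and \eqref{eq:main eq 2 in intro} at once, the prescription $\lim_{t\to0}\int_t^{\infty}$ being needed only because the intermediate identities require the heat-kernel estimates to be uniform for $s$ bounded away from $0$. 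Independence of the cutoff follows from a second transgression: if $\psi_2'$ is another admissible function, $d(\psi_2-\psi_2')$ is $G$-cocompactly supported inside the region where $D_c$ is invertible, and transgressing along the homotopy $(1-r)\psi_2+r\psi_2'$ writes the difference of the two integrals as $\tau_g$ of a supercommutator (hence $0$) plus endpoint terms killed by the spectral gap of $D_c$.

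\emph{Where the difficulty lies.} The substantive work is analytic, not algebraic. Since $\widetilde{X}$ is noncompact with noncompact fundamental domain, one must verify that every operator to which $\tau_g$ is applied genuinely lies in its domain --- that is, is $G$-cocompactly supported up to tails summable against $\sum_{h\sim g}$ --- that the delocalized supertraces of $\psi_1 e^{-sD^{2}}$ and $\psi_2 D^{2}e^{-sD^{2}}$ converge over the noncompact fundamental domain (which needs the local-index cancellation inside the supertrace together with uniform off-diagonal heat estimates from the uniform curvature bound and the spectral gap of $D_c$), and that the Duhamel and transgression integrals converge uniformly enough to differentiate under the integral and to interchange $\lim_{t\to0}$ with $\int$. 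It is precisely here that the \emph{polynomial} growth of the conjugacy class of $g$ is indispensable: it is what lets one dominate $\sum_{h\sim g}e^{-d(hx,x)^{2}/(cs)}$ by an integrable quantity. Granting these uniform estimates, the transgression bookkeeping above is routine.
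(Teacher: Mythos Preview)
Two points of divergence from the paper, one structural and one analytic.

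Structurally, in the paper \eqref{eq:main eq 1 in intro} is not something one proves: $\eta_{g,\infty}(D)$ is \emph{defined} to be $-2\tr_g(\ind_G(D))=2\tau_g(\rho_{X,M,T}(D))$ (Definition~\ref{def: delocalized eta}), the second equality coming from the compatibility of $\tr_g$ and $\tau_g$ under the $K$-theory boundary map (Lemma~\ref{lem commut of tr and tau}). So the entire content of the theorem is \eqref{eq:main eq 2 in intro}, namely the formula $\tr_g(\ind_G(D))=-\lim_{t\to 0}\int_t^\infty\tr_g(e^{-sD_c^-D_c^+}D_c^-[D^+,\psi_2])\,ds$. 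You treat the integral as the definition and \eqref{eq:main eq 1 in intro} as the theorem, which is the opposite logical order.

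For that formula the paper does not transgress a truncated heat supertrace. Following Hochs--Wang--Wang, it builds a parametrix $R=\varphi_1 Q(t)\psi_1+\varphi_2 Q_c\psi_2$ for $D^+$, gluing the finite-time heat parametrix $Q(t)=\frac{1-e^{-tD^-D^+}}{D^-D^+}D^-$ over the interior to the \emph{exact} inverse $Q_c=(D_c^-D_c^+)^{-1}D_c^-$ over the complement. The remainders $S_0=1-RD^+$ and $S_1=1-D^+R$ then lie in $C^*(X,M)^G$ and are $g$-trace class by construction, with $\tr_g(\ind_G(D))=\tr_g(S_0)-\tr_g(S_1)$. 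Inserting the transposed parametrix $R'$ splits this into a McKean--Singer piece over $\widetilde Z$ (zero for $g\neq e$) plus the eta integral. Because $Q_c$ is exact, every operator that appears is supported near $\widetilde Z$: no trace over the noncocompact region is ever taken, and no $s\to\infty$ limit is needed.

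Your transgression route has a genuine gap at the large-$s$ end. The assertion that the $g$-supertrace of $\psi_1 e^{-sD^2}$ tends to $\tr_g(\ind_G(D))$ as $s\to\infty$ is equivalent to the $g$-supertrace of $\psi_2 P_{\ker D}$ vanishing; but $L^2$-harmonic spinors have nonzero (if exponentially small) tails on $\mathrm{supp}\,\psi_2$, and for $g\neq e$ there is no positivity forcing cancellation. Your replacement of $e^{-sD^2}$ by $e^{-sD_c^2}$ via finite propagation is likewise only a small-$s$ estimate; for large $s$ the two differ precisely by the kernel projector, which is the missing piece. These two errors may well compensate, but proving that they do is essentially the parametrix argument rewritten. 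A separate issue: the intermediate quantities $\tr_g(\psi_2 e^{-sD^2})$ and $\tr_g(\psi_2 D^2 e^{-sD^2})$ are integrals over a region with noncocompact $G$-quotient, and for $g\neq e$ there is no local-index cancellation in off-diagonal heat kernels, so your proposed justification does not apply; that step should be run with $\psi_1$ throughout. The paper's device of splicing in the exact $Q_c$ is precisely what sidesteps all three of these difficulties at once.
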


     The higher index $\ind_G(D)$ of $D$, on a complete manifold with invertibility condition at infinity,  was first introduced in Bunke (cf. \cite{Bunke95}). The main significance and subtlety of the higher index formula in Theorem \ref{thm in intr main 1} is that $\tr_g(\ind_G(D))$ vanishes when $X$ is closed (cf. \cite{weinbyu15finitepart}).  Furthermore, when $X$ is a cylindrical manifold obtained from a compact manifold with boundary $\partial M$, and with a metric with positive scalar curvature on the boundary which is collared near $\partial M$, then $\tr_g(\ind_G(D))$ equals the delocalized eta invariant of the Dirac operator on $\partial M$.
	
	 Equation \eqref{eq:main eq 1 in intro} is obtained by considering the higher rho invariant at infinity. These secondary invariants we introduce demonstrate the nonlocality of the higher index of $D$. We also apply the higher rho invariant at infinity to analyze the geometry of manifolds with corner of codimension 2 with positive scalar curvature metrics on all boundary faces.
	
	The classical eta invariant is a nonlocal spectral invariant of Dirac type operators (cf. \cite{APSspectral0,APSspectral1,APSspectral2,APSspectral3}). Let $M$ be a closed odd dimensional spin manifold with a positive scalar curvature metric, $D_M$ be the Dirac operator on $M$. When the following integral converges, the eta invariant of $D_M$ can be expressed as
	\[
	\frac{2}{\sqrt{\pi}} \int_0^\infty \tr(e^{-t^2D^2_{M}} D_{M}) dt
	\]
	where $\tr$ is the operator trace. To take into account of the information of the fundamental group, Lott introduced the delocalized eta invariant (cf. \cite{Lott99}). More precisely, let $G$ be the fundamental group of $M$, $\widetilde M$ be the universal covering space of $M$, and $\widetilde D_{M}$ be the lifting of $D_M$ to $\widetilde M$. Let $g$ be a nontrivial element of $G$, whose conjugacy class $\langle g \rangle$ has polynomial growth. The delocalized eta invariant of $\widetilde D_{M}$ at $g$, $\eta_g (\widetilde D_{M})$, introduced by Lott in \cite{Lott99}, is defined by the following integration
	\[
	\frac{2}{\sqrt{\pi}}\int_0^\infty \tr_g (e^{-t^2\widetilde{D}^2_{ M}} \widetilde D_{M}) dt.
	\]
	Here $\tr_g$ is the following trace map
	\[
	\tr_g (A) = \sum\limits_{h\in \langle g \rangle} \int_{\mathcal F}A(x,hx) dx,
	\]
	on $G$-equivariant Schwartz kernels $A\in C^\infty (\widetilde M \times \widetilde M)$,
	where $\mathcal F$ is a fundamental domain of $\widetilde M$ under the $G$-action. One can also define $\tr_g$ for continuous group, see \cite{HWW19equivariantAPSindexforproperaction} for example. There is also a higher generalization of the pairing between $\tr_g$ and $K$-theory of geometric $C^*$-algebras, introduced by \cite{PflaumPosthumaTang2015}, \cite{ChenWangXieYu}, \cite{PiazzaSchichZenobi}, and \cite{SongTang19}, which is to consider cyclic cocycles.
	
	 Since the metric $m$ on $M$ admits positive scalar curvature, it follows from the Lichnerowicz formula that the higher index of $D_M$, $\ind_G (\widetilde D_{M})$, in the $K$-theory of the group $C^*$-algebra $C^*_r(G)$, is trivial with a specific trivialization. In this case, Higson and Roe proposed to study a secondary invariant in $K$-theory of a certain $C^*$-algebra, the higher rho invariant of $\widetilde D_{M}$, $\rho(\widetilde D_{M})$ (cf. \cite{Roe96, HigsonRoe10}). The higher rho invariant of the Dirac operator has been applied to estimate the lower bound of how many positive scalar curvature metrics a manifold can bear (cf. \cite{Piazzaschick14, xieyu14arelative, XieandYu14positivescalarcurvature, XieYu17higherandmoduli, weinbyu15finitepart}). The higher rho invariant is closely related to the delocalized eta invariant. Let $g$ be a nontrivial element of $G$, whose conjugacy class $\langle g \rangle$ has polynomial growth. In \cite{XYdelocalizedetainvalgebraicityandKtheoryofgroupCalgebras}, Xie and Yu defined a canonical determinant map $\tau_g$ associated to $g$, and showed that $\tau_g (\rho(\widetilde D_{M})) = \frac{1}{2}\eta_g (\widetilde D_{M})$.

	The eta invariants and the higher rho invariants appear in the study on the geometry of manifolds with boundary naturally.
	Let $M$ be an even dimensional spin manifold with boundary $N$, with the metric $m$ having product structure near $N$, and admitting positive scalar curvature when restricted  to $N$. Then the eta invariant of the Dirac operator $D_{N}$ on the boundary, $\eta(D_{N})$, is the the correction term in the formula of the Fredholm index of the Dirac operator $D_{M_\infty}$ on $M_\infty:= M\cup \partial M \times [0, \infty) $. Let $\text{index}(D_{M_\infty})$ be the Fredholm index of  $D_{M_\infty}$. Then
	\begin{equation}\label{eq: aps thm}
		\text{index} (D_{M_\infty}) =\int_M \hat A(M) - \frac{\eta(D_{N})}{2},
	\end{equation}
	where $\hat{A}(M)$ is the $\hat{A}$ genus (cf. \cite{APSspectral0, APSspectral1, APSspectral2, APSspectral3}). The eta invariant explains the nonlocality of the Fredholm index. Equation \ref{eq: aps thm} is often referred to as the Atiyah-Patodi-Singer index theorem. On the other hand, the delocalized eta invariant and the higher rho invariant capture the nonlocality of the higher index of $D_{M_\infty}$. In fact, in \cite{XYdelocalizedetainvalgebraicityandKtheoryofgroupCalgebras,ChenWangXieYu}, Xie-Yu and Chen-Wang-Xie-Yu obtained several higher generalizations of the Atiyah-Patodi-Singer index theorem. In particular, for any nontrivial element $g\in G=\pi_1(M)$ whose conjugacy class is of polynomial growth, they proved
	\begin{equation}\label{eq: higher aps thm}
		\tr_g(\ind_G(\widetilde D_{M_\infty})) =-\tau_g (\rho(\widetilde D_{N})) =-\frac{1}{2}\eta_g(\widetilde D_{N}).
	\end{equation}

We mention that the formula
\[
\tr_g(\ind_G(\widetilde D_{M_\infty}))  =-\frac{1}{2}\eta_g(\widetilde D_{N})
\]
was first established in \cite{PaoloThomas2007jncg} by Piazza and Schick, where their proof employs as a crucial ingredient the specialization of $0$-degree forms of the higher Atiyah-Patodi-Singer index formula of Leichtnam and Piazza (\cite{LeichtPiazza1997}, \cite{Piazzaschick14}). And the formula
\[
\tau_g (\rho(\widetilde D_{N})) =\frac{1}{2}\eta_g(\widetilde D_{N})
\]
was also obtained by Piazza, Schick, and Zenobi in \cite{PiazzaSchichZenobi}.
	
	Since the metric on $M_\infty$ has positive scalar curvature outside the compact set $M$, the Dirac operator $D_{M_\infty}$ is invertible at infinity. In this particular case, the delocalized eta invariant and the higher rho invariant of $D_N$ can be viewed as secondary invariants at infinity associated to $D_{M_\infty}$. The study of the secondary invariants  has led to several major breakthroughs to the estimate of the lower bound of the rank of the abelian group formed by the concordance classes of positive scalar curvature metric (cf. \cite{Piazzaschick14,XieandYu14positivescalarcurvature,xieyu14arelative,XieYu17higherandmoduli,XieYuRudolf}). In the meanwhile, a parallel study to the secondary invariants allows one to estimate the lower bound of the topological structure group (\cite{WXY16}).
	
%	The metric on $M_\infty $ is a special case of ``positive at infinity".
	 The first main result, Theorem \ref{thm in intr main 1}, is a generalization of the above Atiyah-Patodi-Singer index theorem and its higher counterpart to the  case of a noncompact complete manifold with uniform positive scalar curvature metric at infinity.
	
	Let $X$ be an $n$-dimensional complete spin manifold and $Z$ be a compact subset of $X$, let $m$ be a metric on $X$, which has uniformly positive scalar curvature outside $Z$, and $G$ be the fundamental group of $X$. Let $\widetilde{X}$ be the universal covering space of $X$ and $\widetilde{Z}$ be the induced $G$-Galois covering space of $Z$. In this case, following  \cite{Roe2016positivecurvaturepartialvanishing} and \cite{XieandYu14positivescalarcurvature}, one can define the higher index of the Dirac operator $D$ on ${ \widetilde X}$ in $K_n(C^*_r(\widetilde Z)^G)$, which is isomorphic to $K_n(C^*_r(G))$. See Subsections \ref{sec:higher index a definition} and \ref{subsec: higher index different approach} for details. The higher index of $D$ is denoted as $\ind_G(D)$ or $\ind_{ \widetilde X,\widetilde Z,T}(D)$, where $T$ is any sufficiently large number. We emphasize that the definition of the higher index of $D$ is independent of $T$. However, the subscript $T$ manifests the particular choice of a representative class of $\ind_G(D)$. In Subsection \ref{subsec higher rho at infinity}, the higher rho invariant at infinity of $D$, denoted as $\rho_{ \widetilde X,\widetilde Z,T}(D)$, is defined as the image of $\ind_{ \widetilde X,\widetilde Z,T}(D)$ under the connecting map of a $K$-theoretic six-term exact sequence associated to a canonical short exact sequence of geometric $C^*$-algebras. Moreover, for nontrivial element $g\in G$, whose conjugacy class is of polynomial growth, the delocalized eta invariant at infinity of $D$ is simply defined to be
	\begin{equation}\label{line intro tau rho in intro}
	2\tau_g (\rho_{ \widetilde X,\widetilde Z,T}(D)).
	\end{equation}
	As in the case of manifold with cylindrical end, the number in line \eqref{line intro tau rho in intro} is essential for us to obtain Equation \eqref{eq:main eq 1 in intro} in Theorem \ref{thm in intr main 1}.

	In the meanwhile, Equation \eqref{eq:main eq 2 in intro} in Theorem \ref{thm in intr main 1} is established by applying the method in \cite{HWW19equivariantAPSindexforproperaction}, which is invented when Hochs, Wang and Wang was to develop a new approach to obtain a refinement of the Atiyah-Patodi-Singer index theorem. See Section \ref{subsec:formulafordelocalizedeta} for details. At the end of the paper, this method is used to obtain  an $L^2$ version of M\"uller's type Atiyah-Patodi-Singer index theorem associated to the Dirac operator on a manifold with corner endowed with positive scalar curvature metrics on all boundary faces.

	The paper is organized as follows. In Section \ref{sec: preliminary}, we recall basic concepts, including geometric $C^*$-algebras and their smooth subalgebras, which will be used later in the paper. In Section \ref{sec:higher rho at infinity}, we define two secondary invariants at infinity for the Dirac operator on a complete manifold with uniform positive scalar curvature metric outside a compact set, the higher rho invariant at infinity and the delocalized eta invariant at infinity. We establish a formula for the delocalized eta invariant at infinity  in Subsection \ref{subsec:formulafordelocalizedeta}, together with which we generalize the Atiyah-Patodi-Singer index theorem. In Section \ref{sec manifold with corner} we apply the theory developed in Section \ref{sec:higher rho at infinity} to study invariants associated to the Dirac operator on a manifold with corner endowed with positive scalar curvature metrics on all boundary faces.

	\section{Preliminary}\label{sec: preliminary}
	
	In this section, we introduce some notions and concepts used in this paper, including geometric $C^*$-algebras and their smooth subalgebras. All the groups considered in this paper are finitely generated discrete groups. Denote by $|\cdot|$ the word length metric of a group $G$ constructed using some chosen finite set of generators. For an element $g$ in $G$, we say its conjugacy class, $\langle g \rangle$ has polynomial growth if there exist constants $C$ and $d$, such that
	\[
	\sharp \{ h\in \langle g \rangle, |h| \leq n   \}\leq Cn^d.
	\]

	\subsection{Geometric $C^*$-algebras}\label{subsec:geometric c algebra}
	
	In this subsection, we recall definitions of several geometric $C^*$-algebras, including equivariant Roe, localization and obstruction algebras (see  \cite{HigsonRoeYu, Roecoarsecohomoandindextheory, Yulocalizationalgeandbcconjec,   Roe2016positivecurvaturepartialvanishing, YuRufusbook} for more details).
	
	Let $X$ be a complete Riemannian manifold where $G$ acts properly, cocompactly and freely. An $X$-module is a separable Hilbert space equipped with a $*$ representation of $C_0(X)$. It is nondegenerate if the $*$ representation is nondegenerate. It is called standard if no nonzero function in $C_0(X)$ acts as a compact operator. Let $H_X$ be a standard nondegenerate $X$-module, where $H_X$ admits a unitary representation of the group $G$, and the representation of $C_0(X)$ is covariant to the group representation.
	
	We first recall some ingredients for constructing the geometric $C^*$-algebras.
	
	\begin{Def}
		let $H_X$ be an $X$-module and $T\in B(H_X) $ be a bounded linear operator.
		\begin{itemize}
			\item The propagation of $T$ is defined to be
			\[\sup{\{d(x,y)|\  (x,y)\in \text{Supp}(T)\}} ,\]
			where $\text{Supp}(T)$ is the complement in $X\times X$ of the set of points $(x,y)\in X\times X$ for which there exist $f,g\in C_0(X)$ satisfying $f(x)\neq 0,\ g(y)\neq 0$ such that $gTf=0$. $T$ is said to have  finite propagation if its propagation is finite. The propagation of a finite propagation operator $T$ is denoted as $\text{propagation}(T)$.
			\item $T$ is said to be locally compact if $fT$ and $Tf$ are compact for all $f\in C_0(X)$.
			\item $T$ is said to be pseudo-local if $[T,f]$ is compact for all $f\in C_0(X)$.
			\item Let $Z$ be a $G$-invariant subspace of $X$. We say $T$ is supported near $Z$ if there is $\lambda>0$, such that for all $f\in C_0(X)$ whose support is at least distance $\lambda$ away from $Z$, the operators $Tf$ and $fT$ are zero.
		\end{itemize}
	\end{Def}
	
	The following are definitions of the geometric $C^*$-algebras that will be used in this paper.
	
	\begin{Def}\label{def:C algebras}
		Let $H_X$ be a standard (also referred to as ample in the literature) nondegenerate $X$-module and $B(H_X)$ be the operator algebra of all bounded linear operators on $H_X$.
		\begin{itemize}
			\item $G$-equivariant Roe algebra $C^*(X)^G$ is the $C^*$-algebra generated by locally compact $G$-invariant operators with finite propagation.
			\item Let $Z$ be a $G$-invariant subspace of $X$. The localized $G$-equivariant Roe algebra at $Z$, $C^*(X, Z)^G$, is defined to be the $C^*$-algebra generated by $G$-invariant, locally compact operators with finite propagation, supported near $Z$.
			%\item $D^*(X)^G$ is the $C^*$-algebra generated by pseudo-local, finite propagation and $G$-invariant operators.
			\item $G$-equivariant localization algebra $C^*_L(X)^G$ is the $C^*$-algebra generated by all bounded, uniformly norm-continuous functions $f:[0,\infty)\to C^*(X)^G$ such that
			\[ \lim_{t\to \infty}\text{propagation of }f(t)=0. \]
			\item The kernel of the evaluation map
			\begin{eqnarray*}
				ev: C^*_L(X)^G &\to & C^*(X)^G\\
				f   & \to & f(0)
			\end{eqnarray*}
			is called the $G$-equivariant obstruction algebra, and denoted by  $C^*_{L,0}(X)^G$.
			% \item We say an operator $T\in B(H_X)$ is localizable if there is a sequence of pseudo-local, finite propagation operators $T_n$, such that
			%\[\lim_{n\to \infty} \|T-T_n\|=0.\]
			\item Let $Z$ be a $G$-invariant subspace of $X$, then $C^*_{L}(X,Z)^G$ (resp. $C^*_{L,0}(X,Z)^G$) is defined to be the closed subalgebra of $C^*_L(X)^G$ (resp. $C^*_{L,0}(X)^G$) generated by all elements $f$ such that there exists $c_t>0$ satisfying that $\lim_{t\to \infty} c_t=0$, and $\text{Supp} (f(t))\subset \{(x,z)\in X\times X|\  d((x,z),Z\times Z)\leq c_t\}.$
		\end{itemize}
	\end{Def}
	
	In general, if $Z$ is a cocompact $G$-invariant subspace of $X$, then we have the following isomorphism
	\[
	K_*(C^*(Z)^G)\cong K_*(C^*(X, Z)^G)\cong K_*(C^*_r(G)),
	\]
	where the first isomorphism is induced by the obvious embedding map.
	
	At the same time, by Lemma 3.10 of \cite{Yulocalizationalgeandbcconjec}, we know
	\begin{equation}\label{eq:iso of K of roe}
	K_*(C^*_{L}(X,Z)^G)\cong K_*(C^*_{L}(Z)^G).
	\end{equation}
	
	Moreover, by the following two $K$-theoretic six-term exact sequences
	\begin{eqnarray*}\label{eq:iso of K of loc}
	&&\xymatrix{
		K_0(C_{L,0}^*(Z)^G)\ar[r]& K_0(C_{L}^*(Z)^G)\ar[r] &K_0(C^*(Z)^G)\ar[d] \\
		K_1(C^*(Z)^G)\ar[u]&  K_1(C_{L}^*(Z)^G) \ar[l] & K_1(C_{L,0}^*(Z)^G)\ar[l]
	}\\
	&&
	\xymatrix{
		K_0(C_{L,0}^*(X,Z)^G)\ar[r]& K_0(C_{L}^*(X,Z)^G)\ar[r] &K_0(C^*(X,Z)^G)\ar[d] \\
		K_1(C^*(X,Z)^G)\ar[u]&  K_1(C_{L}^*(X,Z)^G) \ar[l] & K_1(C_{L,0}^*(X,Z)^G)\ar[l]
	}
	\end{eqnarray*}
	and a standard five lemma argument, one can see that
	\begin{equation}\label{eq:iso of K of obs}
	K_*(C^*_{L,0}(X,Z)^G)\cong K_*(C^*_{L,0}(Z)^G).
	\end{equation}
	Zeidler gave a constructive proof of the above isomorphisms in \eqref{eq:iso of K of roe}, and \eqref{eq:iso of K of obs}. See Lemma 3.7 of \cite{Zeidler} for details.

	\subsection{Engel-Samurka\c s smooth algebra}
	
	In this subsection, we introduce a $*$-algebra defined by Ka\v{g}an Samurka\c{s} in \cite{KaganSamboundsforfinitepart}, which is $KK$-equivalent to $C_r^*(G)$.  The construction of this algebra is inspired by Engel \cite{Engel2018}. It will be used in computing the paring of the delocalized trace and the higher index.

This algebra depends on the choice of a nontrivial element $g$ of $G$, whose conjugacy class has polynomial growth. For $S\subset G$
	and $f\in l^2 (G)$,  let $\|f\|_S$ be the $l^2$ norm of $f$ restricted to $S$. For $r>0$, set $B_r(S)$ as
	\[
	\{
	h\in G, \forall s\in S, d_G(e, hs^{-1})<r
	\},
	\]
	where $d_G$ is the word length metric.
	
	Choose $C>0$ and $d\in \mathbb{N}$ such that for all $k\in \mathbb{N}$,
	\[
	\sharp \{
	h\in \langle g \rangle, d_G(e,h) =k \}\leq Ck^d.
	\]
	For $a\in B(l^2(G))$ and $r>0$, define
	\[
	\mu_a(r)= \inf \{c>0, \forall f\in l^2(G), \|af\|_{G\backslash B_r(\text{supp} (f) )} \leq c \|f\|_G \},
	\]
	and
	\[
	\|a\|_g = \inf \{A>0, \forall r>0, \mu_a(r)\leq Ar^{-d/2+2}\}.
	\]
	Let $\|\cdot\|_{B(l^2(G))}$ be the operator norm on $B(l^2(G))$. For $f\in \mathbb{C}G$, we also denote  by $f\in B(l^2(G))$ the convolution operator by $f$ from the left.
	
	Define $C^{pol}_g(G)$ to be the completion of $\mathbb{C}G$ in the norm
	\[
	\|f\|= \|f\|_{B(l^2(G))}+ \|f\|_g.
	\]
	In \cite{KaganSamboundsforfinitepart}, Samurka\c s  showed that $C^{pol}_g(G)$ is closed under holomorphic functional calculus. We denote $C^{pol}_g(G)$ by  $\mathcal{A}_g(G)$.
	
	Let us recall the delocalized trace map $\tr_g$ from the Ka\v gan Samurka\c s algebra to $\mathbb{C}$.
	Denote by $\tr_g: \cA_g(G)\rightarrow\C$ the delocalized trace given by
	\begin{equation}\label{eq:orbitaltracemap}
		\tr_g\left(\sum_{\gamma\in G}a_{\gamma}\gamma\right)=\sum_{h\in \langle g \rangle} a_{h}
	\end{equation}
	where $\langle g\rangle $ stands for the conjugacy class of $g$ in $G.$ Moreover, the map $\tr_g$ induces the following delocalized trace map on $K$-theory:
	\[
	\tr_g : K_0(\mathcal{A}_g(G))\cong K_0(C_r^*(G))\to \mathbb{C}.
	\]

	At last, we recall the notion of $g$-trace class operators. Let $\mathcal{F}$  be a fundamental domain of $X$ with respect to the $G$-action. Compare the following definition to Definition 2.5 of \cite{HWW19equivariantAPSindexforproperaction}.
	
	\begin{Def}
		Let $T$ be a $G$-invariant operator having Schwartz kernel in $C^{\infty}(X\times X)$ and $g$ be an element of $G$. Then $T$ is $g$-trace class if
		\[
		\sum_{h\in \langle g \rangle} \int_{\mathcal{F}} |T(x, hx)| dx
		\]
		converges. The $g$-trace of $T$ is defined as
		\[
		\tr_g (T)\triangleq \sum_{h\in \langle g \rangle} \int_{\mathcal{F}} T(x, hx) dx.
		\]
	\end{Def}
	Note that when $G$ is trivial or $g=e$, $T$ is $g$-trace class if $T$ is trace class, which is equivalent to the condition $\int_{M}|T|(x,x)dx<\infty$. For a general $g$ distinct from identity, $\tr_g$ is not a positive trace, hence positivity is not required in the definition of $g$-trace class operators.
	
	\subsection{The Connes-Moscovici smooth subalgebra}
	
	In this subsection, let $M$ be a complete Riemannian manifold with a proper, free and cocompact action by a discrete group $G$. Let us recall Xie and Yu's construction of a particular smooth dense subalgebra of $C^*_{L,0}(M)^G$ (\cite{XYdelocalizedetainvalgebraicityandKtheoryofgroupCalgebras}).
	
	The following construction of a smooth dense subalgebra of $C^*_r(G)\otimes \mathcal{K}$ is due to Connes and Moscovici (cf. \cite{ConnesMoscycohomonovikovconj}).
	Let $\mathcal{R}$ be the algebra of smoothing operators on $M/G$. Under the isomorphism $L^2(M/G)\cong l^2(\mathbb{N})$, $\mathcal{R}$ is identified with the algebra of infinite matrices $(a_{ij})_{i,j\in \mathbb{N}}$, such that
	\[
	\forall k,l\in \mathbb{N},\ \ \ \sup_{i,j}i^kj^l |a_{ij}| < \infty.
	\]
	For a finitely generated discrete group $G$, let $\Delta_G $ be an unbounded operator on $l^2(G)$ defined by
	\[
	\Delta_G g= |g|g , \text{\ for\ }g\in G.
	\]
	Let $\Delta: l^2(\mathbb{N}) \to l^2(\mathbb{N})$ be the unbounded operator defined by
	\[
	\Delta (\delta_j) = j\delta_j.
	\]
	
	Denote by $\partial_G = [\Delta_G, \cdot] $ the unbounded derivation of $B(l^2(G))$, and $\widetilde{\partial}_G =  \partial_G \otimes I$ the unbounded derivation of $B(l^2 (G) \otimes l^2(\mathbb{N}))$. Set
	\[
	\mathcal{B}(M)^G= \{
	A\in C^*_r(G)\otimes \mathcal{K}, \text{ for all }k\in \mathbb{N}, \  \widetilde{\partial}^k_G(A) \circ (I \otimes \Delta )^2 \text{ is bounded }
	\},
	\]
	which can be proved to be a smooth dense subalgebra of $C^*_r(G)\otimes \mathcal{K}$. Lemma 2.7 of \cite{XYdelocalizedetainvalgebraicityandKtheoryofgroupCalgebras} shows that if $\langle g \rangle$ has polynomial growth, then the map $\tr_g$ can be extended to a trace map on $\mathcal{B}(M)^G$ such that for any $A=\sum_g A_g g \in \mathcal{B}(M)^G$, $\tr_g (A)$ equals
	\[
	\sum_{h\in \langle g \rangle} \text{trace}(A_h).
	\]
	Note that this $\tr_g$ map induces the same map on $K_0(C^*_r(G))$ with the corresponding one defined in the line \eqref{eq:orbitaltracemap}.
	
	In the meanwhile, the following algebra
	\[
	\mathcal{B}_{L, 0} (M)^G\triangleq \{
	f\in C^*_{L,0}(M)^G, \forall t\in [0,\infty), f(t)\in \mathcal{B}(M)^G
	\}
	\]
	is a smooth dense subalgebra of $C^*_{L,0}(M)^G$. Let $g\in G$ be a nontrivial element whose conjugacy class has polynomial growth. Define the map $\tau_g : \mathcal{B}_{L, 0} (M)^G\to \mathbb{C}$ to be
	\[
	\tau_g (A) = \frac{1}{2\pi i}\int_{0}^\infty \tr_g (\frac{d A(t)}{d t} A^{-1}(t)) dt.
	\]
	As shown in \cite{XYdelocalizedetainvalgebraicityandKtheoryofgroupCalgebras}, $\tau_g$ induces the following determinant map
	\begin{equation}\label{eq:determinantmap}
		\tau_g: K_1 (\mathcal{B}_{L, 0} (M)^G)\cong K_1(C^*_{L,0} (M)^G) \rightarrow \mathbb{C},
	\end{equation}
	which is also denoted as $\tau_g$.
	
	One may think it is sufficient to recall the definition of either the Engel-Samukar\c s smooth subalgebra or the  Connes-Moscovici smooth subalgebra, however, there is a reason for us not to do so.
	The advantage of the Engel-Samukar\c s smooth subalgebra is that it allows us to do analysis in a noncocompact setting and to apply the method of Hochs, Wang and Wang (cf. \cite{HWW19equivariantAPSindexforproperaction}) to develop the index formula for the Dirac operator on noncocompact complete manifold with uniform positive scalar curvature metric at infinity, while the Connes-Moscovici smooth subalgebra is necessary for us to apply the theory of Xie and Yu on delocalized trace (cf. \cite{XYdelocalizedetainvalgebraicityandKtheoryofgroupCalgebras}) to study the higher rho invariant at infinity and define the delocalized eta invariant at infinity, where a cocompact setting is sufficient.

	\section{Secondary invariants at infinity}\label{sec:higher rho at infinity}
	In this section, we introduce two new secondary invariants for the Dirac operator on a complete spin manifold endowed with a metric with uniform positive scalar curvature outside a cocompact set, i.e. the higher rho invariant at infinity and the delocalized eta invariant at infinity. We begin with two approaches to the definition of the higher index of the Dirac operator. We also establish a formula for the delocalized eta invariant, along with which we generalize the Atiyah-Patodi-Singer index theorem.
	
	\subsection{Higher index}\label{sec:higher index a definition}
	
	In this subsection, we define the higher index of the Dirac operator on a complete manifold having a metric with uniform positive scalar curvature outside a cocompact set. Higher index of the Dirac operator on a complete manifold with positive scalar curvature at infinity was first introduced by Bunke in \cite{Bunke95}. There is a nice description of this higher index in \cite{BlockWeinberger}. However, our construction in this subsection follows  \cite{Roecoarsecohomoandindextheory} \cite{Roe2016positivecurvaturepartialvanishing}.
	
	Let $X$ be a complete Riemannian manifold (even dimensional) with a spin structure on which a discrete group $G$ acts freely, properly and preserving the metric.
	Let $M\subset X$ be a $G$-invariant subset.
	Assume that there is a metric $\mu$ with uniform positive  scalar curvature  with strictly positive lower bound $h_0>0$. on $X\backslash M$.
	
	\begin{figure}[h]
		\centering
		\begin{tikzpicture}[scale=0.5]
		\draw[thick](18,6)to[out=225,in=90](6,0)to[out=270,in=150](18,0);
		\draw[thick](18,4)to[out=225,in=90](14,2)to[out=270,in=150](18,2);
		\draw[thick](9,1.5)to[out=210,in=150](9,0.5);
		\draw[thick](8.7,1.7)to[out=350,in=120](9,1.5)to[out=300,in=40](9,0.5)to[out=220,in=30](8.7,0.3);
		\draw[dashed,thick](10,3.5)to[out=300,in=60](10,-1);
		\node at (8.5,0) {$M$};
		\node at (13,2) {$X\backslash M$};
		%\node at (0,0) {$X$};
		%\draw [->](4,0)--(5.5,0);
		\end{tikzpicture}
		\caption{Complete manifold $X$, with $G$ cocompact subset $M$.}
		\label{fig:X with compact M}
	\end{figure}
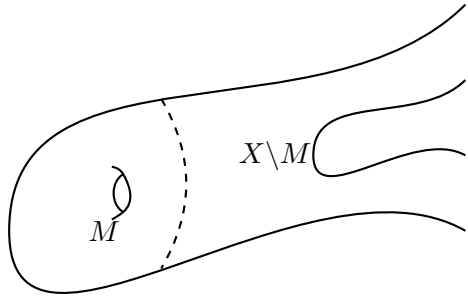

	Let $D$ be the spin Dirac operator associated to the metric $\mu$ on $X$. Denote by $D_M, D_c$ the restriction of $D$ to $M$ and $C:=X\backslash M$ respectively.
	Because $X$ has even dimension, the spinor bundle $S$ is $\Z_2$-graded, i.e. $S=S^+ \oplus S^-$, and the Dirac operator is odd with respect to the grading $L^2(X, S^+)\oplus L^2(X, S^-)$:
	\[
	D=\begin{bmatrix}0 & D^- \\ D^+ & 0\end{bmatrix} \qquad (D^+)^*=D^-.
	\]
	
	Let $b: (-\infty, +\infty)\to [-1,1]$ be an odd smooth function such that $b(x)=1$ when $x> \frac{\sqrt {h_0}}{2}$. Lemma 2.1 and Lemma 2.3 of \cite{Roe2016positivecurvaturepartialvanishing} state that $b(D)$ is pseudo-local,  and $b(D)^2-1$ lies in $C^*(X,M)^G$. Since $b$ is an odd function, $b(D)$ can also be decomposed as
	\[
	\begin{bmatrix}
	0 & b_+ \\
	b_- & 0
	\end{bmatrix}.
	\]
	Let $V$ be
	\[
	\begin{bmatrix}
	1 & b_+ \\
	0 & 1
	\end{bmatrix}\begin{bmatrix}
	1 & 0 \\
	-b_- & 1
	\end{bmatrix}\begin{bmatrix}
	1 & b_+ \\
	0 & 1
	\end{bmatrix}\begin{bmatrix}
	0 & 1 \\
	-1 & 0
	\end{bmatrix}.
	\]
	
	\begin{Def}\label{def: index I}
		The $G$-equivariant coarse index of $D$, $\ind_{G,X,M}(D)$ is defined to be the $K$-theory class in $K_0(C^*(X,M)^G)$, represented by the formal difference of idempotents
		\[
		V\begin{bmatrix}
		1 & 0\\
		0 & 0
		\end{bmatrix}V^{-1} - \begin{bmatrix}
		1 & 0\\
		0 & 0
		\end{bmatrix}.
		\]
		Furthermore, if $M/G$ is compact, the $K$-theory class \[\ind_{G,X,M}(D)\in K_0(C^*(X,M)^G)\cong K_0(C^*_r(G)),\]
		is called the higher index of $D$, and denoted as $\ind_{G}(D)$.
	\end{Def}
	
	Note that direct computation shows that
	\[
	V\begin{bmatrix}
	1 & 0\\
	0 & 0
	\end{bmatrix}V^{-1}=
	\begin{bmatrix}
	(1-b_+b_-)b_+b_-+b_+b_- & (2-b_+b_-)b_+(1-b_-b_+)\\ b_-(1-b_+b_-)& (1-b_-b_+)^2
	\end{bmatrix}.
	\]

	\subsection{Higher index: a different approach}\label{subsec: higher index different approach}
	
	In this subsection, we introduce a new approach to define the higher index of the Dirac operator on a complete manifold with metric admitting uniform positive scalar curvature at infinity. This approach is necessary for us to define the higher rho invariant at infinity. This approach is inspired by \cite{XieandYu14positivescalarcurvature}, \cite{Roe2016positivecurvaturepartialvanishing}, and has been used to define the higher index in \cite{ZhangLiuLiu}. See Willett and Yu's recently published book \cite{YuRufusbook} for more details.

	Set $X_{[r,s]},\ r,s\in [0, \infty],$ to be the $G$-invariant subset of $X$ defined by
	\[
	\{x\in X | r\leq \text{dist}(x, M)\leq s\}.
	\]
	For simplicity, $X_{[0, s]}, \  s< \infty,$ is denoted as $X_{\leq s}$, and $X_{[r, \infty]}, \ r>0$, is denoted as $X_{\geq r}$. Note that $X_{[0, \infty]}= X$. The set $X_{(r,s)} $ are defined similarly.
	
	Let $\chi: (-\infty, \infty)\to [-1, 1]$ be a normalizing function satisfying the following conditions,
	\begin{enumerate}
		\item $\chi$ is a smooth odd function, such that $\lim\limits_{s\to \pm \infty} \chi(s)=\pm 1$.
		\item $\lim\limits_{s\to 0} \frac{\chi(s)}{s}=1$.
		\item $\hat \chi$ is a compactly supported  distribution on $\mathbb{R}$, such that its support contains $0$, and the diameter of its support is bounded by  a real number $\delta$, i.e.  $\text{diam}(\text{Supp}(\hat \chi))\leq \delta$.
		\item Let $b: (-\infty, \infty)\to [-1, 1]$
be the normalizing function such that  $b(x)=1$ when $x> \frac{\sqrt {h_0}}{2}$. Then $\|\chi-b\| \leq \epsilon$, where $\epsilon$ is a positive number less than  $\frac{1}{10000^{10000}}$.
	\end{enumerate}
	Furthermore, let $T\geq 1$ be a real number.
	
	Define $F_T$ to be the self-adjoint bounded operator
	\[\chi(TD)=\int_{-\infty}^\infty \hat\chi(s) e^{2\pi i sTD} ds. \]
	
	Since $\chi$ is an odd function, $F_T$ is odd with respect to the grading $L^2(X, S^+)\oplus L^2(X, S^-)$:
	\[
	F_T=\begin{bmatrix}
	0 & U_T \\
	U^*_T & 0
	\end{bmatrix}.
	\]
	
	Let $N$ be a sufficiently large integer such that for any $x\in [-1000, 1000]$, there are
	\[
	|\sum_{n=1}^N   \frac{(2\pi i)^n}{n!}| \leq  \frac{\epsilon }{1000000}, \ \ \ |
	\sum_{n=N}^\infty \frac{(2\pi i x)^n}{n!}
	|\leq  \frac{\epsilon }{1000000}.
	\]
	
	Let $r>0$ be a positive number such that
	\begin{equation}%\label{eq:a choice of r}
	Nr \leq \frac{\delta}{100}.
	\end{equation}
	For any $n\in \mathbb{N}$, we choose a $G$-invariant locally finite open cover $\{U_{n,j}\}_j$, and a $G$-invariant partition of unity $\{\phi_{n,j}\}_j$ subordinate to  $\{U_{n,j}\}_j$, such that
	\begin{enumerate}
		\item if $U_{n,j}\subset X_{\leq 100T\delta}$, then the diameter of $U_{n,j}$ is less than $\frac{r}{2^n}$.
		\item for any fixed $n$, there are precisely two open sets $U_{n,j}$ such that $U_{n,j}\bigcap X_{\geq 100T\delta} $ is nonempty. For convenience, we denote them by $W_{n,1}= X_{(100T\delta-\frac{r}{2^n}, 110T\delta)}$ and $W_2= X_{> 100T\delta}$. %where $C_T$ is a sufficiently large positive number.
	\end{enumerate}
	
	Set
	\[
	F_{T,n}= \sum_{j} \sqrt{\phi_{n,j}} F_T \sqrt{\phi_{n,j}}, \ \forall n\in \mathbb{N},
	\]
	and
	\[
	F_T(t)= (n+1-t)F_{T,n}  + (t-n)F_{T,n+1}.
	\]
	For any $t$, $F_T(t)$ is odd with respect to the grading $L^2(X, S^+)\oplus L^2(X, S^-)$:
	\[
	F_T(t)=\begin{bmatrix}
	0 & U_T(t) \\
	U^*_T(t) & 0
	\end{bmatrix}.
	\]
	
	As shown by \cite[Lemma 2.6]{Yunovikovfiniteasymptoticgroup}, $\|U_T(t)\|\leq 4\|U_T\|=4$.
	Consider
	\[
	W_T(t)= \begin{bmatrix}
	1 & U_T(t) \\
	0 & 1
	\end{bmatrix}\begin{bmatrix}
	1 &  0\\
	-U_T^*(t) & 1
	\end{bmatrix}\begin{bmatrix}
	1 & U_T(t) \\
	0 & 1
	\end{bmatrix}\begin{bmatrix}
	0 & 1 \\
	-1 & 0
	\end{bmatrix}.
	\]

	Note that $F_T$ has finite propagation $T\delta$.
	
	Decompose $L^2(X, S)$ into $L^2(X_{\leq 300T\delta},S)\oplus L^2(X_{\geq 300T\delta},S) $.
	
	\begin{Lem}\label{lem:cut of coarse index}
		Write
		\[
		U_T(t)U_T^*(t)=\begin{bmatrix}
		B_{11}(t) & B_{12}(t)\\
		B_{21}(t) & B_{22}(t)
		\end{bmatrix}\ \text{and} \ U_T^*(t)U_T(t)=\begin{bmatrix}
		C_{11}(t) & C_{12}(t)\\
		C_{21}(t) & C_{22}(t)
		\end{bmatrix}
		\]
		with respect to the decomposition
		\[L^2(X, S)=L^2(X_{\leq 300T\delta},S)\oplus L^2(X_{\geq 300T\delta},S). \]
		Then one has the following inequalities
		\begin{eqnarray*}
			&\|B_{12}(t)\|=\|B_{21}(t)\|\leq \epsilon,\  \|B_{22}(t)-1\|\leq \epsilon, \\
			&\|C_{12}(t)\|=\|C_{21}(t)\|\leq \epsilon,\  \|C_{22}(t)-1\|\leq \epsilon.
		\end{eqnarray*}
		for any $t\in [0, \infty)$.
	\end{Lem}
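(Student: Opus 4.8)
The plan is to exploit the finite propagation of the operators $F_T$ and $F_T(t)$ together with the fact that the normalizing function $\chi$ is $\epsilon$-close to $b$, which squares to $1$ modulo $C^*(X,M)^G$. First I would observe that $F_T = \chi(TD)$ has propagation at most $T\delta$ by condition (3) on $\hat\chi$ and the finite propagation speed of the wave operator $e^{2\pi i s TD}$. Next, the averaging construction $F_{T,n} = \sum_j \sqrt{\phi_{n,j}}\,F_T\sqrt{\phi_{n,j}}$ only increases propagation by the (uniformly bounded) diameters of the cover elements, so every $F_{T,n}$, hence every $F_T(t)$, has propagation bounded by, say, $2T\delta$ once $r$ is chosen as in the hypotheses. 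Consequently $U_T(t)$ and $U_T^*(t)$ both have propagation at most $2T\delta$. This is the structural input that forces the ``off-diagonal and far-diagonal'' blocks to be small: if $\xi$ is supported in $X_{\geq 300T\delta}$ then $U_T(t)\xi$ is supported in $X_{\geq 298T\delta}$, well inside the region $X_{\geq 100T\delta}$ where the positive scalar curvature lower bound $h_0$ is in force.

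The second ingredient I would set up is the comparison between $F_T(t)^2$ and the ``honest'' square coming from $b$. Since $\|\chi - b\|_\infty \leq \epsilon$ and both are uniformly bounded, $\|F_T^2 - b(TD)^2\| \leq 3\epsilon$ (crudely), and the averaging step is norm-nonexpansive in the sense needed, so $\|F_T(t)^2 - \widehat{b^2}\| \lesssim \epsilon$ for an appropriate averaged version of $b(TD)^2$; meanwhile $b(TD)^2 - 1 \in C^*(X,M)^G$ is supported near $M$ by Lemma 2.1 and Lemma 2.3 of \cite{Roe2016positivecurvaturepartialvanishing}. Combining these two facts: on vectors supported in $X_{\geq 300T\delta}$, the operator $U_T^*(t)U_T(t)$ (resp.\ $U_T(t)U_T^*(t)$) agrees, up to an error controlled by a fixed multiple of $\epsilon$, with the corresponding block of $b(TD)^2$, whose deviation from $1$ lives near $M$ and is therefore annihilated by the cutoffs to $X_{\geq 300T\delta}$ (because the support-near-$M$ operator composed with a cutoff far from $M$ vanishes once $300T\delta$ exceeds the support parameter $\lambda$ plus the propagation $2T\delta$). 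This yields $\|C_{22}(t) - 1\| \lesssim \epsilon$ and $\|B_{22}(t) - 1\| \lesssim \epsilon$; and by a symmetric propagation argument the cross terms $C_{12}(t), C_{21}(t), B_{12}(t), B_{21}(t)$ are sandwiched between a cutoff to $X_{\leq 300T\delta}$ and a cutoff to $X_{\geq 300T\delta}$ of an operator of propagation $\leq 2T\delta$, forcing them to be norm-small as well. The constants $N$, $r$, $\delta$ have been arranged precisely so that all these $O(\epsilon)$ errors add up to at most $\epsilon$.

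The main obstacle I anticipate is bookkeeping the constants: one must check that the cumulative error from (i) replacing $\chi$ by $b$, (ii) the linear-interpolation and partition-of-unity averaging in the definition of $F_T(t)$, and (iii) the Taylor-remainder estimates encoded in the choice of $N$, all remain within the budget $\epsilon < 10000^{-10000}$, rather than merely $O(\epsilon)$. The cleanest route is to prove the inequalities first for the ``endpoint'' operators $F_{T,n}$ (where the propagation and support-near-$M$ arguments are cleanest), with an explicit constant strictly smaller than $\epsilon$, and then note that $F_T(t)$ is a convex combination of $F_{T,n}$ and $F_{T,n+1}$, so each block of $U_T(t)U_T^*(t)$ is controlled by the corresponding blocks at the integer endpoints plus a cross term that is again small by propagation; convexity does not exactly commute with taking the square, but the discrepancy $F_T(t)^2 - \big((n{+}1{-}t)F_{T,n}^2 + (t{-}n)F_{T,n+1}^2\big)$ is a multiple of $(F_{T,n} - F_{T,n+1})^2$, which is itself $O(\epsilon)$ and of bounded propagation, so it is absorbed into the same estimate. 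I would present this last point carefully since it is the one place where the ``obvious'' argument (pushing the cutoff through a convex combination) needs the extra remark about squares.
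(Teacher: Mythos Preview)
Your overall plan---finite propagation plus comparison of $\chi$ to $b$---is the right shape, but the paper executes it more directly, and your route has one soft spot and one unnecessary complication.

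The soft spot is your appeal to $b(TD)^2 - 1 \in C^*(X,M)^G$. Membership in $C^*(X,M)^G$ means only that the operator is a \emph{norm limit} of operators supported near $M$; it does not by itself give $(b(TD)^2-1)f = 0$ for $f$ supported in $X_{\geq 300T\delta}$. Since $b^2-1$ does not have compactly supported Fourier transform, you cannot extract this vanishing from a direct propagation bound on $b(TD)$ either. The paper bypasses the issue: it observes that for $f$ supported in $X_{\geq 120T\delta}$ one has $F_T(t)^2 f = F_T^2 f = \chi^2(TD)f$, and then uses finite propagation of the wave operator (as in the proof of \cite[Lemma~2.5]{Roe2016positivecurvaturepartialvanishing}) to replace $D^2$ by the Friedrichs extension $E$ of $D_c^2$ on $L^2(X\setminus M, S)$, so that $\chi^2(TD)f = \chi^2(T\sqrt{E})f$. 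Since $E\geq h_0/4$ and $T\geq 1$, the spectrum of $T\sqrt{E}$ lies where $b\equiv 1$, and condition (4) on $\chi$ gives $\|\chi^2(T\sqrt{E})-1\|\leq \epsilon$. This Friedrichs-extension step is precisely what converts the heuristic ``$b(TD)^2 - 1$ is supported near $M$'' into the pointwise statement you actually need.

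The unnecessary complication is your convexity analysis. Because the partition of unity is chosen so that on $X_{>110T\delta}$ the only nonzero function is the one subordinate to $W_2$ and it equals $1$ there, for $f$ supported in $X_{\geq 120T\delta}$ one has $F_{T,n}f = F_T f$ for every $n$, hence $F_T(t)f = F_T f$ exactly. The square-versus-convex-combination discrepancy never arises. Likewise, the Taylor remainder and the integer $N$ play no role in this lemma; they enter only later, in the construction of $u_T(t)$ in Theorem~\ref{thm:support of higher rho at infinity}.
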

	\bpr
	By assumption, $D_c^2$ is bounded below by $\frac{h_0}{4}$. Therefore it has a Friedrich's extension $E$ on the Hilbert space $L^2(X\slash M,S)$, which is a selfadjoint operator bounded below by $\frac{h_0}{4}$. For the normalizing function $\chi$, define an operator $\chi^2(T\sqrt{E})$.
	
	Since the propagation of $F_T$ is controlled by $T\delta$, for all $f\in L^2(X_{\geq 120T\delta},S) $, we have
	\[
	F_T(t)(f)\in L^2(X_{\geq 119T\delta},S),\ \ F_T(t)^2(f)\in L^2(X_{\geq 118T\delta},S).
	\]
	By a standard finite propagation argument (cf. the proof of \cite[Lemma 2.5]{Roe2016positivecurvaturepartialvanishing}), we have
	\[
	\|F_T(t)^2(f)-f\| = \| \chi^2(T\sqrt{E}) (f)-f\| \leq  \epsilon \|f\|,
	\]
    where the last inequality follows from condition $4)$ in the definition of $\chi$, which says that $\chi^2$ is closed to $b^2$, a function equals $1$ on the spectrum of $T\sqrt{E}$.
	This implies the Lemma.
	\epr
	
	%For any $t\in [0, \infty)$, denote by $W_T(t)$ the following invertible operator
	%\[
	%W_T(t)= \begin{bmatrix}
	%1 & U_T(t) \\
	%0 & 1
	%\end{bmatrix}\begin{bmatrix}
	%1 &  0\\
	%-U_T^*(t) & 1
	%\end{bmatrix}\begin{bmatrix}
	%1 & U_T(t) \\
	%0 & 1
	%\end{bmatrix}\begin{bmatrix}
	%0 & -1 \\
	%1 & 0
	%\end{bmatrix}.
	%\]
	Now let us consider the formal difference
	\[
	W_T(t)\begin{bmatrix}
	1 &0 \\ 0 & 0
	\end{bmatrix} W_T^{-1}(t)-\begin{bmatrix}
	1 &0 \\ 0 & 0
	\end{bmatrix}.
	\]
	By definition of the boundary map of the $K$-theory six-term exact sequence, this formal difference represents a $K$-theory class in $K_0(C^*(X)^G)$, which is not isomorphic
	to $K_0(C^*_r(G))$. To define the equivariant coarse index of $D$, we construct an almost idempotent
	sufficiently close to
	\[
	W_T(t)\begin{bmatrix}
	1 &0 \\ 0 & 0
	\end{bmatrix} W_T(t)^{-1}
	\]
	in the operator norm.

Note that
\[
W_T^{-1}(t)=
	\begin{bmatrix}
	0 & -1 \\
	1 & 0
	\end{bmatrix}\begin{bmatrix}
	1 & -U_T(t) \\
	0 & 1
	\end{bmatrix}\begin{bmatrix}
	1 &  0\\
	U_T^*(t) & 1
	\end{bmatrix}\begin{bmatrix}1 & -U_T(t) \\
	0 & 1
	\end{bmatrix}.
\]
	Direct computation shows that
	\begin{eqnarray*}
		&& W_T(t)\begin{bmatrix}
			1 &0 \\ 0 & 0
		\end{bmatrix} W_T(t)^{-1}\\
		&=&\begin{bmatrix}
			(1-U_T(t)U_T^*(t))U_T(t)U^*_T(t)+U_T(t)U^*_T(t) & (2-U_T(t)U^*_T(t))U_T(t)(1-U^*_T(t)U_T(t))\\ U_T(t)^*(1-U_T(t)U^*_T(t))& (1-U^*_T(t)U_T(t))^2
		\end{bmatrix}\\
		&=& \begin{bmatrix}
			1-(1-U_T(t)U^*_T(t))^2 & (2-U_T(t)U^*_T(t))U_T(t)(1-U^*_T(t)U_T(t))\\ U^*_T(t)(1-U_T(t)U^*_T(t))& (1-U^*_T(t)U_T(t))^2
		\end{bmatrix}.
	\end{eqnarray*}
	
	Set
	\[
	Z_1(t)= \begin{bmatrix}
	1-B_{11}(t) &0\\
	0 & 0
	\end{bmatrix},\ \text{and} \ Z_2(t)= \begin{bmatrix}
	1-C_{11}(t) &0\\
	0 & 0
	\end{bmatrix}.
	\]
	By Lemma \ref{lem:cut of coarse index}, we have
	\[
	\|Z_1(t) -(1-U_T(t)U^*_T(t))\|\leq 3 \epsilon, \text{and} \ \|Z_2(t) -(1-U^*_T(t)U_T(t))\|\leq 3\epsilon.
	\]
	Define $P_T(t)$ to be the operator
	\begin{equation}\label{eq:PTt the index}
		\begin{bmatrix}
			1-Z_1^2(t) & (2-U_T(t)U^*_T(t))U_T(t) Z_2(t)\\
			U^*_T(t) Z_1(t) & Z_2^2(t)
		\end{bmatrix}.
	\end{equation}
	Then we have
	\[
	\|P_T(t)-W_T(t)\begin{bmatrix}
	1 &0 \\ 0 & 0
	\end{bmatrix} W_T(t)^{-1}\|\leq 2000\epsilon,
	\]
	which implies
\begin{eqnarray*}	
&&\|P_T(t)-P_T(t)^2\|\\
&\leq &\|P_T(t)-W_T(t)\begin{bmatrix}
	1 &0 \\ 0 & 0
	\end{bmatrix} W_T(t)^{-1}\|+\|(W_T(t)\begin{bmatrix}
	1 &0 \\ 0 & 0
	\end{bmatrix} W_T(t)^{-1})^2-P_T(t)^2\|\\
&\leq &\|P_T(t)-W_T(t)\begin{bmatrix}
	1 &0 \\ 0 & 0
	\end{bmatrix} W_T(t)^{-1}\|+\|W_T(t)\begin{bmatrix}
	1 &0 \\ 0 & 0
	\end{bmatrix} W_T(t)^{-1}-P_T(t)\|\|P_T(t)\|\\
&&+\|W_T(t)\begin{bmatrix}
	1 &0 \\ 0 & 0
	\end{bmatrix} W_T(t)^{-1}\|\|W_T(t)\begin{bmatrix}
	1 &0 \\ 0 & 0
	\end{bmatrix} W_T(t)^{-1}-P_T(t)\|\\
&\leq & 5\times 2000\epsilon=10000\epsilon.
\end{eqnarray*}
	The following lemma explains why we construct $P_T(t)$ in Equation \eqref{eq:PTt the index}.
	
	\begin{Prop}\label{prop:cut of coarse index}
		Let $P_T(t)$ be as in Equation \eqref{eq:PTt the index}, then $P_T(t)$ preserves the decomposition
		\[
		L^2(X, S)=L^2(X_{\leq 300T\delta}, S)\oplus L^2(X_{\geq 300T\delta}, S).
		\]
		Moreover, $P_T(t)$ has the form
		\[
		P_T(t)= \begin{bmatrix}
		P_T'(t) & 0\\
		0 &  \begin{bmatrix}
		1 & 0 \\ 0 & 0
		\end{bmatrix}
		\end{bmatrix}
		\]
		with respect to the decomposition
		\[
		L^2(X, S)=L^2(X_{\leq 300T\delta},S)\oplus L^2(X_{\geq 300T\delta}, S).
		\]
		
	\end{Prop}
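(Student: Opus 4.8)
The plan is to verify the block structure of $P_T(t)$ by tracking the propagation of each operator appearing in its definition against the cut at distance $300T\delta$ from $M$. Recall that $F_T$, and hence each $F_{T,n}$ and each $F_T(t)$, has propagation at most $T\delta$ (for $F_{T,n}$ one uses that the diameters of the cover elements are controlled, and $Nr \le \delta/100$, so the propagation increase from the $\sqrt{\phi_{n,j}}F_T\sqrt{\phi_{n,j}}$ construction is negligible). Thus $U_T(t)$, $U_T^*(t)$, $U_T(t)U_T^*(t)$ and $U_T^*(t)U_T(t)$ all have propagation bounded by a small multiple of $T\delta$, say at most $10T\delta$.

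First I would examine the bottom-right entry. By definition $Z_2^2(t) = \begin{bmatrix} (1-C_{11}(t))^2 & 0 \\ 0 & 0\end{bmatrix}$ with respect to the decomposition $L^2(X_{\leq 300T\delta},S)\oplus L^2(X_{\geq 300T\delta},S)$, and by Lemma \ref{lem:cut of coarse index} combined with Proposition \ref{prop:cut of coarse index}'s own hypotheses one has $C_{11}(t)$ acting on $L^2(X_{\leq 300T\delta},S)$. So $Z_2^2(t)$ already preserves the decomposition and is $0$ on the $\geq 300T\delta$ summand; similarly $1-Z_1^2(t)$ is $1$ on the $\geq 300T\delta$ summand up to the $B_{11}(t)$ correction. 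The content of the proposition is that the true operator $P_T(t)$, including the off-diagonal corner entries, is \emph{exactly} block-diagonal with the stated lower-right block $\begin{bmatrix}1 & 0 \\ 0 & 0\end{bmatrix}$, not merely approximately so. For this I would argue as follows: a vector $f$ supported in $X_{\geq 300T\delta}$ is in particular supported away from $X_{\leq 290T\delta}$; since every relevant operator has propagation $\le 10T\delta$, applying $U_T(t)$, $U_T^*(t)$, products thereof, and the normalizing-function operators keeps the support inside $X_{\geq 290T\delta}$. Then the key point — exactly as in the proof of Lemma \ref{lem:cut of coarse index} via the Friedrich's extension $E$ of $D_c^2$ — is that on $L^2(X_{\geq \text{something}},S)$ one actually has $U_T(t)U_T^*(t) = \chi^2(T\sqrt E)$ acting as an operator supported there, and since $\chi^2 = b^2$ on the spectrum of $T\sqrt E$, a finite-propagation/spectral argument gives that $U_T(t)U_T^*(t)f = f$ and $U_T^*(t)U_T(t)f = f$ \emph{exactly} on a slightly smaller collar, hence $B_{22}(t) = 1$, $C_{22}(t)=1$, $B_{12}(t)=B_{21}(t)=0$, $C_{12}(t)=C_{21}(t)=0$ when one places the cut correctly. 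Plugging these exact identities into Equation \eqref{eq:PTt the index} collapses the corner entries on the $\geq 300T\delta$ summand to give exactly $\begin{bmatrix}1&0\\0&0\end{bmatrix}$ and kills the off-diagonal blocks between the two summands.

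More precisely, I would insert intermediate collars: use $300T\delta$ for the final cut but exploit that on $L^2(X_{\geq 120T\delta},S)$ one has $F_T(t)^2 = \chi^2(T\sqrt E)$ by the finite-propagation argument of Lemma \ref{lem:cut of coarse index}, and that $\chi^2(T\sqrt E) - 1$ is not just small but, on the part of the spectrum of $T\sqrt E$ where $b^2 \equiv 1$, vanishes — so one must be slightly careful and instead run the argument with the genuinely compactly-supported-away part. The clean way: show $U_T(t)U_T^*(t)$ and $U_T^*(t)U_T(t)$ restricted to $L^2(X_{\geq 300T\delta},S)$ map into $L^2(X_{\geq 290T\delta},S)$ and there equal the corresponding restrictions of $F_T(t)^2$, which by the spectral identity equal the identity on that subspace; feed this into $Z_1(t), Z_2(t)$ to get $Z_1(t)|_{\geq} = Z_2(t)|_{\geq} = 0$, whence the lower-right block of $P_T(t)$ is $\begin{bmatrix}1-0 & 0 \\ 0 & 0\end{bmatrix}$ and the $(1,2)$ and $(2,1)$ blocks vanish. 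The main obstacle I anticipate is bookkeeping of the collar widths: one needs the chain $300T\delta \rightsquigarrow 290T\delta \rightsquigarrow \dots$ of support shrinkages under repeated application of finite-propagation operators to stay inside the region where the positive-scalar-curvature invertibility estimate (via the Friedrich's extension $E$) is valid, and to confirm that $300$ is comfortably large enough given the propagation bound; this is routine but is where all the care goes. Once the exact block identities are established, the displayed form of $P_T(t)$ is immediate from Equation \eqref{eq:PTt the index}.
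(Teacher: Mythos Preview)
Your proposal contains a genuine gap. The heart of your argument is the claim that on $L^2(X_{\geq 300T\delta},S)$ one has $U_T(t)U_T^*(t)f=f$ and $U_T^*(t)U_T(t)f=f$ \emph{exactly}, because ``$\chi^2=b^2$ on the spectrum of $T\sqrt{E}$''. This is false: condition~(4) in the definition of $\chi$ only says $\|\chi-b\|\le\epsilon$, not $\chi=b$. Consequently $\chi^2(T\sqrt{E})$ is only $\epsilon$-close to the identity, which is precisely what Lemma~\ref{lem:cut of coarse index} gives: $\|B_{22}(t)-1\|\le\epsilon$, $\|B_{12}(t)\|\le\epsilon$, and so on. These are genuine inequalities, not equalities, and no placement of the cut makes them exact. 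So the route ``prove $B_{22}=1$, $B_{12}=0$ exactly, then plug into \eqref{eq:PTt the index}'' cannot work.

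The paper's argument avoids this entirely, and in fact you already wrote down the key observation without exploiting it. The operators $Z_1(t)$ and $Z_2(t)$ are \emph{defined} as the truncations
\[
Z_1(t)=\begin{bmatrix}1-B_{11}(t)&0\\0&0\end{bmatrix},\qquad
Z_2(t)=\begin{bmatrix}1-C_{11}(t)&0\\0&0\end{bmatrix}
\]
with respect to the $\leq 300T\delta$/$\geq 300T\delta$ decomposition. So $Z_1(t)$ and $Z_2(t)$ annihilate $L^2(X_{\geq 300T\delta},S)$ and preserve $L^2(X_{\leq 300T\delta},S)$ \emph{by construction}, with no spectral input needed. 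Now look at \eqref{eq:PTt the index}: every entry of $P_T(t)$ except the top-left carries a factor $Z_1(t)$ or $Z_2(t)$ on the right, so for $f\in L^2(X_{\geq 300T\delta},S)$ those entries kill $f$; and since $Z_1(t)f=0$, the top-left entry gives $(1-Z_1^2(t))f^+=f^+$. This is the whole computation for the outer block. For the inner block one uses that $Z_1(t)f,\,Z_2(t)f\in L^2(X_{\leq 300T\delta},S)$ by definition, together with the finite propagation of $U_T(t)$, to see that $(2-U_T U_T^*)U_T Z_2(t)f$ and $U_T^*Z_1(t)f$ stay in $L^2(X_{\leq 300T\delta},S)$. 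No exact spectral identity is ever invoked; the point of introducing $Z_1,Z_2$ in place of $1-U_TU_T^*,\,1-U_T^*U_T$ was precisely to \emph{force} the block structure that you are trying to derive.
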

	
	\bpr
	The lemma is proved by direct computation.
	
	For any $f\in L^2(X_{\geq 300T\delta}, S)$, by definitions of $Z_1$ and $Z_2$, we have
	\begin{eqnarray*}
		&& Z_1^2(t) f = 0,\\
		&&(2-U_T(t)U^*_T(t))U_T(t) Z_2(t) f=0,\\
		&& U^*_T(t) Z_1(t) f=0,\\
		&& Z_2^2(t)f=0.
	\end{eqnarray*}
	On the other hand, for any $f\in L^2(X_{\leq 300T\delta}, S)$, we have
	\begin{eqnarray*}
		&& Z_1(t) f \in   L^2(X_{\leq 300T\delta}, S),\\
		&& Z_1^2(t) f \in   L^2(X_{\leq 300T\delta}, S),\\
		&& Z_2(t) f \in   L^2(X_{\leq 300T\delta}, S),\\
		&& Z_2^2(t)f \in   L^2(X_{\leq 300T\delta}, S).
	\end{eqnarray*}
	For any $t$, by definition of $U_T(t)$, we know that the propagation of $U_T(t)$ and $U^*_T(t)$ is less than $T\delta$. Thus one can see
	\begin{eqnarray*}
		&& (2-U_T(t)U^*_T(t))U_T(t) Z_2(t) f \in   L^2(X_{\leq 300T\delta}, S),\\
		&& U^*_T(t)Z_1(t)f \in   L^2(X_{\leq 300T\delta}, S).
	\end{eqnarray*}
	This completes the proof.
	\epr
	
	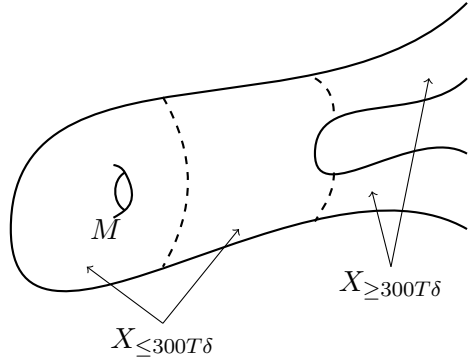
\begin{figure}[h]
		\centering
		\begin{tikzpicture}[scale=0.5]
		\draw[thick](18,6)to[out=225,in=90](6,0)to[out=270,in=150](18,0);
		\draw[thick](18,4)to[out=225,in=90](14,2)to[out=270,in=150](18,2);
		\draw[thick](9,1.5)to[out=210,in=150](9,0.5);
		\draw[thick](8.7,1.7)to[out=350,in=120](9,1.5)to[out=300,in=40](9,0.5)to[out=220,in=30](8.7,0.3);
		\draw[dashed,thick](10,3.5)to[out=300,in=60](10,-1);
		\draw[dashed,thick](14,4)to[out=330,in=450](14.5,3);
		\draw[dashed,thick](14.5,1.5)to[out=270,in=60](14,0.2);
		\node at (8.5,0) {$M$};
		\node at (10,-3) {$X_{\leq 300T\delta}$};
		\draw[->](10,-2.5)--(8,-1);
		\draw[->](10,-2.5)--(12,0);
		\node at (16,-1.5) {$X_{\geq 300T\delta}$};
		\draw[->](16,-1)--(15.5,1);
		\draw[->](16,-1)--(17,4);
		%\node at (0,0) {$X$};
		%\draw [->](4,0)--(5.5,0);
		\end{tikzpicture}
		\caption{For all $t\in[0, \infty)$, the operator $P_{X,M,T}(t)$ is supported on $X_{\leq 300T\delta}$.}
		\label{fig:support relind representative}
	\end{figure}
	
Since  $\|P_T'(t)^2-P_T'(t)\|\leq 10000\epsilon$, the spectrum of $P_T'(t)$ lies in
    \[
    O(0,20000\epsilon)\cup O(1, 20000\epsilon).
    \]
Note that $20000\epsilon$ is far less than $\frac{1}{4}$, which implies that
\[
O(0,20000\epsilon)\cap O(1, 20000\epsilon)=\emptyset.
\]
Thus
\[
		P_T'(t)-\begin{bmatrix}
		1 & 0 \\
		0 & 0
		\end{bmatrix}
		\]
defines a class in $K_0(C^*(X_{\leq 300T\delta})^G)$. In fact, set $\mathcal{H}\equiv 0$ on $O(0,20000\epsilon)$ and $\mathcal{H}\equiv 1$ on $O(1,20000\epsilon)$, then
\[
\mathcal{H}(P_T')(t)=\frac{1}{2\pi i}\int_\mathcal{C} \frac{\mathcal{H}(z)}{z-P_T'(t)}dz,
\]
where $\mathcal{C}$ is a contour surrounding the spectrum of $P_T'(t)$ in $O(0,20000\epsilon)\cup O(1, 20000\epsilon)$, defines a genuine idempotent.
	\begin{Def}\label{def: index II}
		By Proposition \ref{prop:cut of coarse index}, for any $t\in [0, \infty)$,
		\[
		P_T'(t)-\begin{bmatrix}
		1 & 0 \\
		0 & 0
		\end{bmatrix}
		\]
		represents an element in $K_0(C^*(X_{\leq 300T\delta})^G)$, which is denoted by $\ind_{X,M,T}(D)$.  (see Figure \ref{fig:support relind representative}).
		
		When $M/G$ is compact, $\ind_{X,M ,T}(D)$ defines an element in $K_0(C^*_r(G))$ and coincides with the higher index defined in Definition \ref{def: index I}.
		%\[
		%\ind_{X,M,T}(D)= \ind_{G}(D)\in K_0(C^*_r(G)).
		%\]
	\end{Def}
	%\begin{Def}\label{def:relative index}
	%	Let $P_T'$ be the operator defined in Proposition \ref{prop:cut of coarse index}. By definition,
	%	\[
	%	P_T'-\begin{bmatrix}
	%	1 & 0 \\
	%	0 & 1
	%	\end{bmatrix}
	%	\]
	%	represents an element in $K_0(C^*(X_{\leq 300T\delta})^G).$
	%	We call this element the relative equivariant coarse index, or simply relative coarse index of $D$.
	%	In the following, we denote the relative coarse index of $D$ by $\ind_c (D,T)$.
	%	For a fix $T$, $\ind_c (D,T)$ is independent from the choice of normalizing function $\chi.$
	%	Furthermore, under the isomorphism
	%	\[
	%	K_0(C^*(X_{\leq 300T\delta})^G)\cong K_0(C^*_r(G)),
	%	\]
	%	$\ind_c (D,T)$ gives us an element in $ K_0(C^*_r(G))$, which we call
	%	the relative higher index of $D$, or higher index of $D$ for short, and denote as $\ind_G D$. Note that $\ind_G D $ does not depend on the choice of $T$ and $\chi$.
	%\end{Def}
	
In addition, it is straightforward to see that
\[
\|P_T'(t)-\mathcal{H}(P_T')(t)\|\leq 20000\epsilon.
\]

Then we have the following estimate which is useful in the next subsection:
\begin{eqnarray*}
&&\|e^{2\pi i P_T'(t)}-e^{2\pi i \mathcal{H}(P_T')(t)}\|\\
&=&\| \frac{1}{2\pi i}\int_{\|z\|=4} \frac{e^{2\pi i z}}{z-P_T'(t)}dz- \frac{1}{2\pi i}\int_{\|z\|=2} \frac{e^{2\pi i z}}{z-\mathcal{H}(P_T')(t)}dz\|\\
&\leq & 20000\epsilon\times  \frac{4}{2\pi}\int_{\|z\|=4}1dz\\
&=& 2^5\times 10^5 \epsilon  \leq  500000\epsilon.
\end{eqnarray*}

	\subsection{Higher rho invariant at infinity}\label{subsec higher rho at infinity}
	
	In this subsection, we define the higher rho invariant at infinity. This invariant gives a $K$-theoretic  demonstration of the nonlocality of the higher index of the Dirac operator on noncompact spin manifold endowed with a metric admitting uniform positive scalar curvature at infinity.
	
	The following theorem is necessary in order to introduce the definition of the higher rho invariant at infinity.
	
	\begin{Thm}\label{thm:support of higher rho at infinity}
		Let $\ind_{X,M,T}(D)$ be as in Definition \ref{def: index II}, then
		$
		\partial (\ind_{X,M,T}(D))$ represents an element in $ K_1(C^*_{L,0}(X_{[90T\delta, 300T\delta]})^G),
		$ 	
		where $\partial $ is the connecting map in the following exact sequence
		\[
		\xymatrix{
			K_0(C^*_{L,0}(X_{\leq  300T\delta})^G) \ar[r] &  K_0(C^*_{L}(X_{\leq 300T\delta})^G) \ar[r] & K_0(C^*(X_{\leq 300T\delta})^G)\ar[d]\\
			K_1(C^*(X_{\leq 300T\delta})^G)\ar[u] & K_1(C^*_{L}(X_{\leq 300T\delta})^G)\ar[l] & K_1(C^*_{L,0}(X_{\leq 300T\delta})^G)\ar[l]
		}.
		\]
	\end{Thm}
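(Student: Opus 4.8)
The plan is to track the propagation behavior of the homotopy $P_T'(t)$, $t\in[0,\infty)$, more carefully than in Definition \ref{def: index II}, and to show that after applying the boundary map $\partial$ the resulting path of invertibles can be represented by operators supported in the annular region $X_{[90T\delta,\,300T\delta]}$. First I would recall that $\partial(\ind_{X,M,T}(D))$ is represented, at the chain level, by the path of invertibles $t\mapsto e^{2\pi i \mathcal{H}(P_T')(t)}$ in (a matrix algebra over) the unitalization of $C^*_{L,0}(X_{\leq 300T\delta})^G$; this is the standard description of the connecting map $K_0(C^*)\to K_1(C^*_{L,0})$ coming from the evaluation sequence, using that $t\mapsto P_T(t)$ is itself a path inside $C^*_L$ with $P_T(0)$ the coarse index idempotent. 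By the estimate displayed just before this subsection, $\|e^{2\pi i P_T'(t)}-e^{2\pi i\mathcal{H}(P_T')(t)}\|\leq 500000\epsilon$, so it is harmless (up to a small homotopy of invertibles) to work with $e^{2\pi i P_T'(t)}$ instead, whose entries are explicit polynomials in $U_T(t)$, $U_T^*(t)$, $Z_1(t)$, $Z_2(t)$.

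The key point is a localization-of-support argument near $t=0$. The operator $P_T'(0)-\begin{bmatrix}1&0\\0&0\end{bmatrix}$ built from $F_{T,0}=\sum_j\sqrt{\phi_{0,j}}F_T\sqrt{\phi_{0,j}}$ and from $Z_1(0),Z_2(0)$ differs from $\begin{bmatrix}1&0\\0&0\end{bmatrix}$ only where $1-U_T(0)U_T^*(0)$ or $1-U_T^*(0)U_T(0)$ is nonzero; by Lemma \ref{lem:cut of coarse index} and the finite-propagation bound ($F_T$ has propagation $T\delta$, hence each $F_{T,n}$ has propagation at most $T\delta + r/2^n \le 2T\delta$), these "error terms" $1-U_T(t)U_T^*(t)$ and $1-U_T^*(t)U_T(t)$ are supported within distance $\sim T\delta$ of $M$, i.e. inside $X_{\leq 90T\delta}$ comfortably, for every $t$. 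Therefore $e^{2\pi i P_T'(t)}-1$ is supported in $X_{\leq 90T\delta}$. On the other hand, the whole construction already lives in $C^*_{L,0}(X_{\leq 300T\delta})^G$, so its support is automatically in $X_{\leq 300T\delta}$. Combining: the path $t\mapsto e^{2\pi i P_T'(t)}$ has, at each $t$, the form $1 + (\text{operator supported in } X_{\leq 90T\delta})$, but as an element of $C^*_{L,0}(X_{\leq 300T\delta})^G$. One then invokes an excision/inclusion argument — precisely the type encoded in Lemma 3.10 of \cite{Yulocalizationalgeandbcconjec} and Lemma 3.7 of \cite{Zeidler}, and reflected in \eqref{eq:iso of K of obs} — to relocate this class into $K_1\big(C^*_{L,0}(X_{[90T\delta,\,300T\delta]})^G\big)$: since $F_T(t)$ restricted away from $M$ satisfies $F_T(t)^2\approx 1$ (Lemma \ref{lem:cut of coarse index}), the invertible $e^{2\pi i P_T'(t)}$ is, modulo a controlled homotopy, trivial on $X_{\leq 90T\delta}$ and hence lies in the subalgebra supported on the annulus.

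The main obstacle I anticipate is making the last relocation rigorous while keeping the propagation of the whole path $t\in[0,\infty)$ uniformly controlled — that is, producing an explicit homotopy of invertibles, inside $C^*_{L,0}(X_{\leq 300T\delta})^G$, from $t\mapsto e^{2\pi i P_T'(t)}$ to a path that is literally the identity on the $L^2$-sections over $X_{\leq 90T\delta}$, without destroying the condition $\lim_{t\to\infty}(\text{propagation})=0$ that defines $C^*_L$. The natural device is to exploit that on $X_{\geq 90T\delta}$ the operator $F_T(t)$ is already "invertible up to $\epsilon$" (Lemma \ref{lem:cut of coarse index}), so $P_T'(t)$ is there $\epsilon$-close to a genuine idempotent $\begin{bmatrix}1&0\\0&0\end{bmatrix}$, and functional calculus with the contour $\mathcal{C}$ gives a smooth straight-line homotopy; one must check this homotopy stays within the relevant $C^*$-algebra, which is routine but requires care with the grading and with the fact that $U_T(t)$ does not exactly preserve the decomposition but only up to propagation $T\delta$. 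I expect the rest — the identification of $\partial(\ind)$ with the exponential path, and the bookkeeping of which region each error term is supported in — to be a direct consequence of Lemma \ref{lem:cut of coarse index}, Proposition \ref{prop:cut of coarse index}, and the finite-propagation estimates already assembled above.
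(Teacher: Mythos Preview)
There is a genuine gap. You assert that the path $t\mapsto P_T'(t)$ constructed in Subsection~\ref{subsec: higher index different approach} already lies in $C^*_L(X_{\leq 300T\delta})^G$ and can serve as the lift needed to compute $\partial$. This is false: the partition of unity $\{\phi_{n,j}\}$ used to define $F_{T,n}$ deliberately keeps two large pieces $W_{n,1}=X_{(100T\delta-r/2^n,\,110T\delta)}$ and $W_2=X_{>100T\delta}$, on which the propagation of $F_{T,n}$ remains of order $T\delta$ for every $n$; hence the propagation of $P_T'(t)$ on $X_{[100T\delta,300T\delta]}$ does \emph{not} tend to zero as $t\to\infty$. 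Worse, if $P_T'(t)$ \emph{were} a lift in $C^*_L$, then --- since it is for every $t$ an almost-idempotent with $\|P_T'(t)^2-P_T'(t)\|\leq 10000\epsilon$ --- the path $e^{2\pi i P_T'(t)}$ would be uniformly within $10^6\epsilon$ of the identity, and $\partial(\ind_{X,M,T}(D))$ would be the trivial class. Your support analysis reflects this confusion: you first state that $e^{2\pi i P_T'(t)}-1$ is supported in $X_{\leq 90T\delta}$, and a few lines later that the invertible is ``trivial on $X_{\leq 90T\delta}$ and hence lies in the subalgebra supported on the annulus''; these two assertions are opposite.

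What the paper actually does is build a \emph{different} lift $P'_{X,M,T}(t)$, using a second partition of unity $\{\zeta_t,\theta_t\}$ cutting near $100T\delta$ together with a damping function $\xi$ that switches off the $\theta$-piece for $t\ge 2$. For large $t$ only the $\zeta$-piece survives; it is supported in $X_{\leq 100T\delta}$, where the original partition $\{\phi_{n,j}\}$ has diameters $<r/2^n$, so the propagation of the lift genuinely tends to zero. Crucially, this new lift is \emph{not} a path of almost-idempotents: the damping of the $\theta$-piece destroys idempotency, and that failure is concentrated near the cut, i.e.\ in the annulus. The second essential device is to replace $e^{2\pi i x}$ by a degree-$N$ polynomial of the special form $1+q(x)(x^2-x)$; this has finite propagation (controlled via $Nr\le\delta/100$) on the inner region, and equals $1$ wherever $P'_{X,M,T}(t)$ is locally idempotent --- which is exactly what forces the resulting invertible to act as the identity on $L^2(X_{\leq 90T\delta},S)$ and confines the representative to $X_{[90T\delta,300T\delta]}$. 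Neither construction appears in your proposal, and both are indispensable.
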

	
	\bpr
	Fix $t=0$.
	To compute $\partial (\ind_{X,M,T}(D))$, one needs to lift $P_{X,M,T}(0)$ to $C^*_{L}(X_{\leq 300T\delta})^G$ first.
	
	Recall that $N$ is an integer sufficiently large such that for any $x\in [-1000, 1000]$,
	\[
	|\sum_{n=1}^N   \frac{(2\pi i)^n}{n!}| \leq  \frac{\epsilon }{1000000}, \ \ \ |
	\sum_{n=N}^\infty \frac{(2\pi i x)^n}{n!}
	|\leq  \frac{\epsilon }{1000000}.
	\]
	and $r$ is a positive number such that
		\begin{equation}\label{eq:a choice of r}
	Nr \leq \frac{\delta}{100}.
	\end{equation}
	Thus for any  $x\in [-1000, 1000]$, we have
    \[
    |1 + \sum_{n=1}^N \frac{(2\pi i x)^n}{n!}
	 -e^{2\pi i x}|= |
	\sum_{n=N}^\infty \frac{(2\pi i x)^n}{n!}
	|< \frac{\epsilon}{2}.
    \]
    Furthermore, there is
    \[
   | 1 + \sum_{n=1}^N \frac{(2\pi i x)^n}{n!}-e^{2\pi i x}-\sum_{n=1}^N   \frac{(2\pi i)^n}{n!}x| \leq \frac{\epsilon}{2}+\frac{\epsilon}{2}=\epsilon.
    \]
    However, we have
    \[
     \sum_{n=1}^N \frac{(2\pi i x)^n}{n!}-\sum_{n=1}^N  \frac{(2\pi i)^n}{n!}x=\sum_{n=2}^N\left( \sum_{j=0}^{n-2} \frac{(2\pi i )^n}{n!} x^j \right)(x^2-x).
    \]
    These above equalities imply that
	\[
	|
	1 + \sum_{n=2}^N\left( \sum_{j=0}^{n-2} \frac{(2\pi i )^n}{n!} x^j \right) \left(x^2-x\right)-e^{2\pi i x}|\leq \epsilon.
	\]

	For each $t\in [0, \infty)$, consider a $G$-invariant partition of unity $\{\zeta_t, \theta_t\}$ on $X_{\leq 300T\delta}$ satisfying
	\begin{enumerate}
		\item $\zeta_t+\theta_t =1$.
		\item for $n\leq t <n+1$, $\zeta_t \equiv 1 $ on $X_{\leq 100T\delta- r/2^{n-1} }$ and $\zeta_t \equiv 0$ on $X_{\geq 100T\delta- r/2^n  }$.
	\end{enumerate}
	
	Let $\xi\in C^\infty (-\infty, \infty) $ be a decreasing function such that $\xi|_{[-\infty, 1]}\equiv 1$ and $\xi|_{[2, \infty]}\equiv 0$.
	
	By definition, one can verify that the path $t\mapsto P'_{X,M,T}(t)$ forms a lifting of $P_{X,M,T}(0)$ in $C^*_L(X_{\leq 300T\delta})^G$, where $P'_{X,M,T}(t)$ is defined as
	\[
 \left\{
	\begin{array} {cc}
	(1-t) P_{X,M,T}(0) + t(\sqrt{\zeta_0} P_{X,M,T}(0) \sqrt{\zeta_0} + \sqrt{\theta_0} P_{X,M,T}(0) \sqrt{\theta_0}  ), & t\in [0, 1],\\
	\begin{matrix}
	\begin{bmatrix}
	1 & 0\\
	0 & 0
	\end{bmatrix} + \sqrt{\zeta_{t-1}} (P_{X,M,T}(t-1)-\begin{bmatrix}
	1 & 0\\
	0 & 0
	\end{bmatrix}   ) \sqrt{\zeta_{t-1}}\\
	+  \xi(t) \sqrt{\theta_{t-1}}( P_{X,M,T}(t-1)-\begin{bmatrix}
	1 & 0\\
	0 & 0
	\end{bmatrix}) \sqrt{\theta_{t-1}}
	\end{matrix},
	& t\in [1, \infty).
	\end{array}
	\right.
	\]
	
	Then we have
	\[
	\partial (\ind_{X,M,T}(D)) = [e^{2\pi i P'_{X,M,T}(\cdot)}].
	\]
	
	Define a path of invertible operators $u_T(t)$ as
	\[
	1 + \sum_{n=2}^N\left( \sum_{j=0}^{n-2} \frac{(2\pi i )^n}{n!} (P'_{X,M,T}(t))^j \right) \left(\left(P'_{X,M,T}(t)\right)^2-\left(P'_{X,M,T}(t)\right)\right).
	\]
	It is straight-forward to verify that
	\[
	\|u_T(t)-e^{2\pi iP'_{X,M,T}(t)}\|\leq \epsilon.
	\]
	Thus $u_T(t)$ is invertible for any $t$ and the path
	\[
	u'_T(t)= \left\{
	\begin{array}{cc}
	(1-t) +tu_T(0)   & t\in[0,1] \\
	u_T(t-1) & t\in [1,\infty)
	\end{array}
	\right.
	\]
	represents the same element in $K_1(C^*_{L,0}(X)^G )$ as the path $e^{2\pi i P'_{X,M,T}(\cdot)}$. In other words, we have
	\[
	\partial (\ind_{X,M,T}(D)) = [u'_T(t)].
	\]
	
	Note that the propagation of $u'_T(\cdot)$ restricted to $X_{\leq 98 T\delta}$ is less than $\frac{\delta}{100}$ by the choice of $r$ in line \eqref{eq:a choice of r}.
	For any $h\in L^2 (X_{\leq 80 T\delta}, S)$ and any $t\in [0,\infty)$, we have both $(u'_T(t) )^2h\in  L^2 (X_{\leq 98 T\delta}, S)$. Recall that
\[\mathcal{H}(P_T')(t)=\frac{1}{2\pi i}\int_\mathcal{C} \frac{\mathcal{H}(z)}{z-P_T'(t)}dz\]
is a path of genuine idempotent obtained from $P_T'(t)$ as above.
Thus $\forall h\in L^2 (X_{\leq 80 T\delta}, S)$, we have
 %As mentioned in Definition \ref{def: index II}, $P_{X,M,T}(t)$ is a genuine projection. Note that $P'_{X,M,T}(t)=P_{X,M,T}(t)$ restricting to $L^2 (X_{\leq 98 T\delta}, S)$. Thus $e^{2\pi i P'_{X,M,T}(t)}$ equals identity restriction to $L^2 (X_{\leq 80 T\delta}, S)$. One can see that for any $h\in L^2 (X_{\leq 80 T\delta}, S)$
	%\begin{eqnarray*}
%		&&\|u'_T(t)h - h\| \\
%		&=& \| h + \sum_{n=2}^N\left( \sum_{j=0}^{n-2} \frac{(2\pi i )^n}{n!} (P'_{X,M,T}(t))^j \right) \left(\left(P'_{X,M,T}(t)\right)^2-\left(P'_{X,M,T}(t)\right)\right)h-h\|\\
%		&=& \| h + \sum_{n=2}^N\left( \sum_{j=0}^{n-2} \frac{(2\pi i )^n}{n!} (P_{X,M,T}(t))^j \right) \left(\left(P_{X,M,T}(t)\right)^2-\left(P_{X,M,T}(t)\right)\right)h-h\|\\
%		&=& \|h + \sum_{n=2}^N\left( \sum_{j=0}^{n-2} \frac{(2\pi i )^n}{n!} (P_{X,M,T}(t))^j \right) \left(\left(P_{X,M,T}(t)\right)^2-\left(P_{X,M,T}(t)\right)\right)h-e^{2\pi i P_{X,M,T}}h  \|  \\
%		&\leq& \epsilon\|h\|.
%	\end{eqnarray*}
\begin{eqnarray*}
		&&\|u'_T(t)h - h\| \\
		&=& \|u'_T(t)h-e^{2\pi i \mathcal{H}(P'_{X,M,T})(t)}h\|\\
        &\leq & \| u'_T(t)h-e^{2\pi i P'_{X,M,T}(t)}h\|+\|e^{2\pi i P'_{X,M,T}(t)}h-e^{2\pi i \mathcal{H}(P'_{X,M,T})(t)}h\|\\
&\leq & \epsilon +500000\epsilon \leq 1000000\epsilon.
	\end{eqnarray*}

	By the same reason, for any $h \in L^2 (X_{[80 T\delta, 95 T\delta]},S)$, we have
	\[
	\|u'_T(t)h - h\|= \|u'_T(t)h- e^{2\pi i P'_{X,M,T}(t)}h\|\leq 1000000\epsilon.
	\]
	Finally, for all $h \in L^2 (X_{[95 T\delta, 300 T\delta]},S)$, $u'_T(t)h$ belongs to $ L^2 (X_{[90 T\delta, 300 T\delta]},S)$.
	The above computation shows that with respect to the decomposition
	\[
	L^2 (X_{\leq 300 T\delta},S) = L^2 (X_{\leq 90 T\delta},S) \oplus L^2 (X_{[90 T\delta, 300T\delta]},S),
	\]
	the path $u'_T(t)$ can be perturbed to the following path of invertible operators
	\[
	\begin{bmatrix}
	I & 0 \\
	0 & u_{X,M,T}(t)
	\end{bmatrix},
	\]
	such that
	\[
	\|u'_T(t) - \begin{bmatrix}
	I & 0 \\
	0 & u_{X,M,T}(t)
	\end{bmatrix}   \|\leq 4 \times 1000000 \epsilon.
	\]
	Hence we have
	\begin{equation}\label{eq:uXMT}
		\partial (\ind_{X,M,T}(D)) = [ u_{X,M,T}(\cdot) ],
	\end{equation}
	where the path $u_{X,M,T}(t)$ is supported on $X_{[90T\delta, 300T\delta]}$ (see Figure \ref{fig:support rho representative}). The proof is then complete.
	\epr
	
	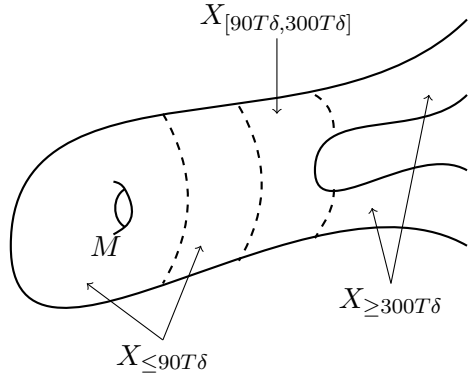
\begin{figure}[h]
		\centering
		\begin{tikzpicture}[scale=0.5]
		\draw[thick](18,6)to[out=225,in=90](6,0)to[out=270,in=150](18,0);
		\draw[thick](18,4)to[out=225,in=90](14,2)to[out=270,in=150](18,2);
		\draw[thick](9,1.5)to[out=210,in=150](9,0.5);
		\draw[thick](8.7,1.7)to[out=350,in=120](9,1.5)to[out=300,in=40](9,0.5)to[out=220,in=30](8.7,0.3);
		\draw[dashed,thick](10,3.5)to[out=300,in=60](10,-1);
		\draw[dashed,thick](12,3.7)to[out=300,in=60](12,-0.4);
		\draw[dashed,thick](14,4)to[out=330,in=450](14.5,3);
		\draw[dashed,thick](14.5,1.5)to[out=270,in=60](14,0.2);
		\node at (8.5,0) {$M$};
		\node at (10,-3) {$X_{\leq 90T\delta}$};
		\draw[->](10,-2.5)--(8,-1);
		\draw[->](10,-2.5)--(11,0);
		\node at (16,-1.5) {$X_{\geq 300T\delta}$};
		\draw[->](16,-1)--(15.5,1);
		\draw[->](16,-1)--(17,4);
		\node at(13,6){$X_{[90T\delta, 300T\delta]}$};
		\draw[->](13,5.5)--(13,3.5);
		\end{tikzpicture}
		\caption{for all $t\in[0, \infty)$, operator $u_{X,M,T}(t)$ is supported on $X_{[90T\delta, 300T\delta]}$.}
		\label{fig:support rho representative}
	\end{figure}
	
	%In the following, without loss of generality, we assume that $X_{250T\delta, 350T\delta}$ is a tubular neighbourhood of $X_{300T\delta}$ and that $X_{300T\delta}$ is an $n-1$ dimensional submanifold.
	
	\begin{Def}\label{def:higher rho at infinity}
		For any $T$ sufficiently large, the element in $K_1(C^*_{L,0}(X_{[90T\delta, 300T\delta]})^G)$ represented by  	
		$u_{X,M,T}(\cdot)$ (defined in Equation \eqref{eq:uXMT} ) is the higher rho invariant at infinity, denoted by $\rho_{X,M,T}(D)$.
	\end{Def}
	
	%As $T$ tends to infinity, the distance of a base point $x_0$ and the support of $\rho_{X,M,T}(D)$, $X_{[90T\delta, 300T\delta]}$, tends to infinity. That is why $\rho_{X,M,T}(D)$ is called the higher rho invariant at infinity.

The reason why we call it the higher rho invariant at infinity, is that we can push its support as far as possible from any base point of $X$.

Certainly, the obstruction algebra $C^*_{L,0}(X_{[90T\delta, 300T\delta]})^G$ and the $K$-group $K_1(C^*_{L,0}(X_{[90T\delta, 300T\delta]})^G)$ depend on $T$, so does the higher rho invariant $\rho_{X,M,T}(D)$. However, the dependence is actually not a problem  for Definition \ref{def:higher rho at infinity}. In fact, consider $T<T'$, under the following two embeddings
\begin{eqnarray*}
&\iota: K_1(C^*_{L,0}(X_{[90T\delta, 300T\delta]})^G)\to  K_1(C^*_{L,0}(X_{\leq 300T'\delta})^G)\\
&\iota': K_1(C^*_{L,0}(X_{[90T'\delta, 300T'\delta]})^G)\to  K_1(C^*_{L,0}(X_{\leq 300T'\delta})^G),
\end{eqnarray*}
it follows directly from the definition that
\[
\iota (\rho_{X,M,T}(D))=\iota'(\rho_{X,M,T'}(D)).
\]

Different from the classical higher rho invariant, the higher rho invariant at infinity is an obstruction of a manifold having a metric with uniform positive scalar curvature. More precisely, if $X$ bears a metric with uniform positive scalar curvature, then $\rho_{X, M, T}(D)$ is trivial. Moreover, the higher rho invariant at infinity is designed to differentiate  uniform  positive scalar curvature metrics at infinity. We will discuss this point in a forthcoming paper.

	\subsection{Delocalized eta invariant at infinity}\label{subsec: define delocalized eta}

	%\begin{Prop}
	%Let $g$ be a nontrivial element of $G$, we have
	%	\[
	%	\tr_g(\ind_G D) = \tau_g (\rho(D, T) ),
	%	\]
	%	for any $T$.
	%\end{Prop}
	%
	%\bpr
	%
	%
	%
	%As mentioned above, we have $\tr_g (\ind_G D) = \tr_g (\ind_c(D, T))$ for any $T$. Assume that $\ind_c(D, T)$ is represented by projection $p_T$. Let the path $a_T(t)$ be a lift of $p_T$ in $C^*_L(X_{\leq 300T\delta})^G$, such that $a_T(0)= p_T$. Then $\rho(D, T) = \partial (\ind_c(D, T)) = e^{2\pi i a_T(\cdot)}$. It follows that
	%\[
	%\tau_g (\rho(D, T) ) = \int_0^\infty \tr_g ( (e^{2\pi i a_T(t)})' e^{-2\pi i a_T(\cdot)} ) dt =  \int_0^\infty \tr_g ( a_T'(t) ) dt.
	%\]
	%Since $g$ is nontrivial element and the propagation of $a_T'(t)$ tends to zero as $t$ tends to infinity, we have
	%\[
	% \int_0^\infty \tr_g ( a_T'(t) ) dt = \tr_g (a_T(0)) = \tr_g (P_T).
	%\]
	%The proof is completed.
	%
	%\epr
	
	In this subsection, we define the delocalized eta invariant at infinity, which measures the nonlocality of higher index of the Dirac operators on noncompact spin manifolds endowed with a metric admitting positive scalar curvature at infinity numerically.
	
	Consider the connecting map
	\[
	\partial: K_0(C^*_r(G)) \to K_1(C^*_{L,0}(X_{[90T\delta, 300T\delta]})^G)
	\]
	in the $K$-theory six-term exact sequence associated to the short exact sequence
	\[
	0\to C_{L,0}^*(X_{[90T\delta, 300T\delta]})^G \to C_{L}^*(X_{[90T\delta, 300T\delta]})^G \to C^*(X_{[90T\delta, 300T\delta]})^G\to 0.
	\]
	
	Here $K_0(C^*_r(G))$ is identified with $K_0(C^*(X_{[90T\delta, 300T\delta]})^G)$. The following lemma is similar to Lemma 3.9 of \cite{XYdelocalizedetainvalgebraicityandKtheoryofgroupCalgebras}, which shows that the delocalized trace on $K_0(C^*_r(G))$, defined in line \eqref{eq:orbitaltracemap}, and the determinant map, defined in line \eqref{eq:determinantmap}, are compatible with the $K$-theory boundary map. Note that although $X_{[90T\delta, 300T\delta]}$ is not a complete manifold, it can be embedded into a complete manifold with proper, free and cocompact $G$-action. For example, let $N\subset X$ be a manifold with boundary which contains $X_{\leq 400T\delta}$, then a perturbation around $\partial N$ makes $N \cup N^{op}$ into a complete smooth manifold, in which one can embed  $X_{[90T\delta, 300T\delta]}$. Thus for any nontrivial element $g\in G$ whose conjucgay class has polynomial growth, the determinant map
	\[
	\tau_g : K_1(C_{L,0}^*(X_{[90T\delta, 300T\delta]})^G)\to \mathbb{C}
	\]
	is still well defined.
	
	\begin{Lem}\label{lem commut of tr and tau}
		Let $g\in G$ be a nontrivial element whose conjugacy class has polynomial growth, then the following diagram commutes:
		\[
		\xymatrix{
			K_0(C^*_r(G)) \ar[r]^{\partial\ \ \ \ \ \ \ \ \ }  \ar[d]_{\tr_g } &K_1(C_{L,0}^*(X_{[90T\delta, 300T\delta]})^G) \ar[d]^{-\tau_g }\\
			\mathbb{C} \ar[r] &\mathbb{C}.
		}
		\]
		
	\end{Lem}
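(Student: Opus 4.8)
The plan is to reduce the commutativity of the stated square to the known analogous statement, Lemma 3.9 of \cite{XYdelocalizedetainvalgebraicityandKtheoryofgroupCalgebras}, which asserts exactly this compatibility in the cocompact setting when $X_{[90T\delta, 300T\delta]}$ is replaced by a complete manifold carrying a proper, free, cocompact $G$-action. First I would fix, as suggested in the paragraph preceding the lemma, an auxiliary complete $G$-manifold $Y$ into which $X_{[90T\delta, 300T\delta]}$ embeds $G$-equivariantly and cocompactly — for instance the double $N \cup N^{op}$ of a compact-with-boundary $G$-neighbourhood $N \supset X_{\leq 400T\delta}$. The inclusion induces $C^*$-algebra homomorphisms $C^*(X_{[90T\delta, 300T\delta]})^G \to C^*(Y)^G$, and likewise on the localization and obstruction algebras, all compatible with the evaluation maps; hence they fit into a morphism of the two six-term exact sequences and in particular commute with the connecting maps $\partial$. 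By the isomorphisms recorded in \eqref{eq:iso of K of roe} and \eqref{eq:iso of K of obs} (together with $K_*(C^*(Z)^G)\cong K_*(C^*_r(G))$), both the source identification $K_0(C^*(X_{[90T\delta,300T\delta]})^G)\cong K_0(C^*_r(G))$ and the target group $K_1(C^*_{L,0}(X_{[90T\delta,300T\delta]})^G)$ are carried isomorphically to their counterparts for $Y$.

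Next I would check that the two functionals in the square are preserved under these identifications. The delocalized trace $\tr_g$ on $K_0(C^*_r(G))$ is intrinsic to the group and does not see which cocompact model we use; this is already the content of the remark after \eqref{eq:orbitaltracemap} and the discussion of the Connes--Moscovici subalgebra $\mathcal{B}(M)^G$. For the determinant map $\tau_g \colon K_1(C^*_{L,0}(\cdot)^G)\to\mathbb{C}$ of \eqref{eq:determinantmap}, I would invoke the naturality of $\tau_g$ with respect to inclusions of $G$-invariant subsets: a path of invertibles supported in $X_{[90T\delta, 300T\delta]}$ and one representing its image in $C^*_{L,0}(Y)^G$ have the same Connes--Moscovici representatives, so the integral $\frac{1}{2\pi i}\int_0^\infty \tr_g(\frac{dA(t)}{dt}A^{-1}(t))\,dt$ computes the same number. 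The combination of these two observations means the square for $X_{[90T\delta, 300T\delta]}$ is obtained from the square for $Y$ by pre- and post-composing with isomorphisms, so commutativity of the former follows from commutativity of the latter.

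Finally, commutativity of the square for the complete cocompact manifold $Y$ is exactly Lemma 3.9 of \cite{XYdelocalizedetainvalgebraicityandKtheoryofgroupCalgebras} (the sign $-\tau_g$ being the same sign that appears there), applied with $g$ a nontrivial element whose conjugacy class has polynomial growth — the hypothesis under which both $\tr_g$ on $\mathcal{B}(Y)^G$ and $\tau_g$ on $\mathcal{B}_{L,0}(Y)^G$ are defined. I expect the main obstacle to be purely bookkeeping: making precise that the $K$-theory identification of $K_0(C^*(X_{[90T\delta,300T\delta]})^G)$ with $K_0(C^*_r(G))$ used implicitly in the statement is the one compatible with the embedding into $Y$, and that Zeidler's constructive form of the isomorphisms \eqref{eq:iso of K of roe}--\eqref{eq:iso of K of obs} (Lemma 3.7 of \cite{Zeidler}) genuinely intertwines the boundary maps; once that is in place, transporting the two trace-type functionals is routine. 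No genuinely new analytic estimate is needed beyond what is already established for the cocompact case.
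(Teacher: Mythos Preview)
Your reduction argument is correct, but the paper takes a much more direct route: it simply reproduces the short computation behind Lemma~3.9 of \cite{XYdelocalizedetainvalgebraicityandKtheoryofgroupCalgebras} in situ, rather than transporting the whole square to an auxiliary complete cocompact manifold~$Y$. Concretely, for $[p]\in K_0(C^*_r(G))$ one chooses a lift $a(t)$ in $C^*_L(X_{[90T\delta,300T\delta]})^G$ with $a(0)=p$, so that $\partial[p]=[u(\cdot)]$ with $u(t)=e^{2\pi i a(t)}$; then $u^{-1}u'=2\pi i\,a'$ and the defining integral for $\tau_g$ telescopes to $\lim_{t\to\infty}\tr_g(a(t))-\tr_g(p)$. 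Since $g\neq e$ and the propagation of $a(t)$ tends to zero, the limit term vanishes and one gets $\tau_g(\partial[p])=-\tr_g([p])$ immediately.

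The trade-off is this: your approach is conceptually clean and makes explicit why the non-completeness of $X_{[90T\delta,300T\delta]}$ is harmless, but it front-loads the verification that the embedding into $Y$ intertwines both $\partial$ and the two trace-type functionals (the ``bookkeeping'' you flag). The paper's direct computation avoids all of that naturality checking---it only needs the embedding into $Y$ to know that $\tau_g$ is well defined, not to actually run the argument---at the cost of reproving a special case of the cited lemma. In this instance the direct calculation is short enough that it is the more economical choice.
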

	
	\bpr
	For any element $[p]\in K_0(C^*_r(G))\cong K_0(C^*(X_{[90T\delta, 300T\delta]})^G)$, $\partial [p] =[u(\cdot)]$ is defined by
	\[
	u(t)= e^{2\pi i a(t)}, t\in [0, \infty),
	\]
	where $a(t)$ is a lift of $p$ in $C_{L}^*(X_{[90T\delta, 300T\delta]})^G$ with $a(0)= p$. By definition, we have
	\[
	\tau_g (u)= \frac{1}{2\pi i }\int_0^\infty \tr_g(u(t)^{-1}u'(t)) dt = \int_0^\infty \tr_g(a'(t)) dt=\lim\limits_{t\to \infty}\tr_g(a(t))-\tr_g(a(0)).
	\]
	Since $g$ is not the identity element, $\tr_g (a(t))=0$ when $t$ is sufficiently large such that the propagation of $a(t)$ is small enough. It follows that
	\[
	\tau_g (\partial [p])=\tau_g([u])= -\tr_g ([p]).
	\]
	The lemma is then proved.
	\epr
	
	By definition, Lemma \ref{lem commut of tr and tau} implies that
	\[\tr_g(\ind_G (D)) = -\tau_g (\rho_{X,M,T}(D) ).\]
	
	Inspired by Theorem 4.3 of \cite{XYdelocalizedetainvalgebraicityandKtheoryofgroupCalgebras}, we define the delocalized higher eta invariant at infinity as follows.
	
	\begin{Def}\label{def: delocalized eta}
		Let $g\in G$ be a nontrivial element. The delocalized eta invariant at $g\in G$ at infinity, denoted by $\eta_{g, \infty}(D)$, is defined to be
		\[
		\eta_{g, \infty}(D):=-2\tr_{g} (\ind_G (D))=2\tau_g (\rho_{X,M,T}(D)).
		\]
	\end{Def}
	\vspace{.2cm}

	\subsection{A formula for the higher index}\label{sec:formula for higher index}
	
	In this subsection, we establish a formula for the higher index of the Dirac operator we considered above for the purpose of computing the delocalized eta invariant defined in Definition \ref{def: delocalized eta}. In Subsections \ref{sec:formula for higher index} and \ref{subsec:formulafordelocalizedeta}, we assume in addition that $X$ has bounded geometry, and consider only finitely generated discrete group. This is due to a special choice of parametrix of the Dirac operator.
	
	Compare Subsection \ref{sec:formula for higher index} and \ref{subsec:formulafordelocalizedeta} with \cite[Section 5]{HWW19equivariantAPSindexforproperaction}.
	
	%Let $X$ be a complete Riemannian manifold (even dimensional) with a spin structure on which a discrete group $G$ acts freely, properly and preserving the metric.
	%Let $M\subset X$ be a $G$-invariant submanifold with compact quotient $M/G.$
	%Assume that the complement $X\backslash M$ admits a metric of uniform positive scalar curvature $h$.
	%
	%Let $D$ be the spin Dirac operator associated to the metric $h$ on $X$. Denote by $D_M, D_c$ the restriction of $D$ to $M$ and $C=X\backslash M$ respectively.
	Consider the Dirac operator on $X$ (even dimensional) which is odd with respect to the grading $L^2(X, S^+)\oplus L^2(X, S^-)$:
	\[
	D=\begin{bmatrix}0 & D^- \\ D^+ & 0\end{bmatrix} \qquad (D^+)^*=D^-.
	\]
	
	Set $Q(t):=\frac{1-e^{-tD^-D^+}}{D^-D^+}D^-, t\geq 0$. Operators $Q(t)$ are well defined since $X$ is a complete manifold, $D^-D^+$ admits a unique self-adjoint extension and the functional calculus can be applied .  Recall $D_c$ is the restriction of $D$ to $X\slash M$. Choose a parametrix for $D_c^+$ to be its inverse $Q_c:=(D_c^-D_c^+)^{-1}D_c^{-}.$ Operators $Q_c$ are well defined since $D_c^-D_c^+$ is bounded below by a positive number and admits a unique Friedrich's extension (a self-adjoint extension).
	
	Without loss of generality, we assume that $M$ is a manifold with boundary $\partial M$, and assume that
	\[(-1, 1)\times \partial M
	= \{x\in X, \text{d}(x, \partial M) \leq 1\}
	\]
	is a tubular neighbourhood of $\partial M$ in $X$. %For any real number $a,\  b\in [0,1]$, we simply denote $(a,b)\times \partial M $ to be the set
	%\[
	%\{x\in X, a< \text{d}(x, M) < b\}.
	%\]
	Let $\psi_1$ be a smooth function from $X$ to $[0,1]$ such that $\psi_1 \equiv 1 $ on $X_{[0, \frac{1}{4}]}$ and $\psi_1 \equiv 0 $ on $X_{\geq \frac{3}{4}} $. Set $\psi_2= 1- \psi_1$. Let $\varphi_1,\ \varphi_2$ be smooth functions from $X$ to $[0,1]$ such that $\varphi_1 \equiv 1$ on $X_{[0, \frac{7}{8}]}$ and $\varphi_1 \equiv 0 $ on $X_{\geq 1}$, while $\varphi_2 \equiv 0$ on $X_{[0, \frac{1}{16}]}$ and $\varphi_2 \equiv 1$ on $X_{\geq \frac{1}{8}}$. Note that $\varphi_j \psi_j =\psi_j$ and $[D,\varphi_j]\psi_j=0$, $j=1,2$.
	
	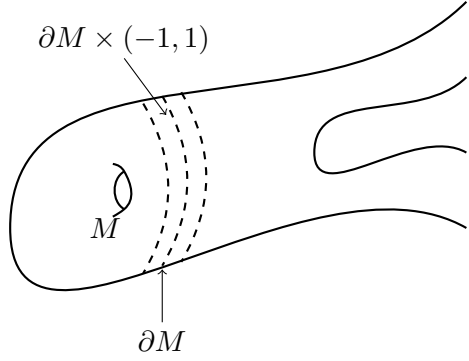
\begin{figure}[h]
		\centering
		\begin{tikzpicture}[scale=0.5]
		\draw[thick](18,6)to[out=225,in=90](6,0)to[out=270,in=150](18,0);
		\draw[thick](18,4)to[out=225,in=90](14,2)to[out=270,in=150](18,2);
		\draw[thick](9,1.5)to[out=210,in=150](9,0.5);
		\draw[thick](8.7,1.7)to[out=350,in=120](9,1.5)to[out=300,in=40](9,0.5)to[out=220,in=30](8.7,0.3);
		\draw[dashed,thick](10,3.5)to[out=300,in=60](10,-1);
		\draw[dashed,thick](9.5,3.3)to[out=300,in=60](9.5,-1.2);
		\draw[dashed,thick](10.5,3.6)to[out=300,in=60](10.5,-0.9);
		\node at (8.5,0) {$M$};
		\node at (10,-3){$\partial M$};
		\draw[->](10,-2.5)--(10,-1.1);
		\node at (9,5) {$\partial M\times (-1,1)$};
		\draw[->](9,4.5)--(10.1, 3);
		%\node at (13,2) {$X\backslash M$};
		%\node at (0,0) {$X$};
		%\draw [->](4,0)--(5.5,0);
		\end{tikzpicture}
		\caption{Tubular neighbourhood of $\partial M$.}
		\label{fig:tubular partial M}
	\end{figure}

	We construct parametrices for $D^+$ as follows,
	\begin{eqnarray*}
		R&=&\varphi_1 Q\psi_1+\varphi_2 Q_c\psi_2, \\
		R'&=& \psi_1 Q \varphi_1+\psi_2 Q_c \varphi_2.
	\end{eqnarray*}
	Denote
	\begin{eqnarray*}
		& S_0=1-RD^+,\\
		& S_1=1-D^+R,\\
		& S_0' = 1- R'D^+.
	\end{eqnarray*}
	
	Denote by $C^*(X, M)^G$ the equivariant Roe algebra supported near $M.$
	
	The proof for the following Lemma goes verbatim as the one for Lemma 4.5 in  \cite{HWW19equivariantAPSindexforproperaction}.
	\begin{Lem}
		The operators $S_0, S_1, S'_0$ have smooth kernels and
		\[
		S_0, S_1, S_0'\in C^*(X, M)^G.
		\]
	\end{Lem}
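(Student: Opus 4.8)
The plan is to show that each of the three operators $S_0 = 1 - RD^+$, $S_1 = 1 - D^+R$, and $S_0' = 1 - R'D^+$ is given by a smooth kernel supported near $M$, and that this kernel decays fast enough to land in $C^*(X,M)^G$. The starting point is the algebraic identity obtained by plugging the definitions of $R$ and the parametrices $Q(t)$, $Q_c$ into $1 - RD^+$. Using $Q(0^+)$-style notation, one has $Q D^+ = 1 - e^{-tD^-D^+}$ formally (more precisely $Q$ is really $\int$ of the heat semigroup, so $QD^+$ differs from the identity by a smoothing term coming from the heat kernel at the relevant time), while $Q_c D_c^+ = 1$ exactly because $Q_c$ is the genuine inverse of the invertible operator $D_c^+$. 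So after substituting $R = \varphi_1 Q \psi_1 + \varphi_2 Q_c \psi_2$ and using the commutator relations $\varphi_j \psi_j = \psi_j$ and $[D,\varphi_j]\psi_j = 0$ stated in the text, the terms where the $D^+$ passes through cleanly telescope against $\psi_1 + \psi_2 = 1$, and what survives is a sum of (i) a heat-kernel smoothing term localized where $\psi_1 \ne 0$, hence supported in $X_{[0,3/4]} \subset$ a neighborhood of $M$, and (ii) commutator terms $[D^+,\varphi_j] Q(\cdot)\psi_j$ type expressions, which vanish by $[D,\varphi_j]\psi_j = 0$, together with terms like $[D^+,\psi_j]$-contributions supported in the transition region $X_{[1/16,1]}$, again a bounded neighborhood of $M$. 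The upshot is an explicit formula for $S_0$ as a finite sum of operators each of which is manifestly supported within distance $1$ of $M$ and each of which is built from the heat semigroup $e^{-tD^-D^+}$ or the resolvent-type operator $Q_c$ composed with compactly-supported smooth multiplication operators.

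The key steps, in order: first, carry out the substitution and use $\varphi_j\psi_j = \psi_j$, $[D,\varphi_j]\psi_j = 0$ to reduce $S_0$, $S_1$, $S_0'$ to explicit finite sums; second, observe that each summand has Schwartz kernel which is smooth — this is where one invokes elliptic regularity / the fact that the heat kernel $e^{-tD^-D^+}(x,y)$ is smooth on a complete manifold with bounded geometry and that $Q_c$, being a function of the invertible elliptic operator $D_c^-D_c^+$ bounded below, likewise has a smooth kernel (standard functional-calculus-plus-finite-propagation estimates as in the cited Lemma 2.5 of \cite{Roe2016positivecurvaturepartialvanishing}); third, check the support condition, namely that there is $\lambda > 0$ (here $\lambda = 1$) so that $fS_0 = S_0 f = 0$ whenever $\mathrm{supp}(f)$ is at distance $> \lambda$ from $M$ — this is immediate because every surviving summand is sandwiched between a cutoff ($\psi_j$ or $\varphi_j$ or a commutator thereof) supported in $X_{\le 1}$; fourth, check local compactness and finite propagation to conclude membership in $C^*(X,M)^G$: finite propagation for the heat term needs the usual remark that $e^{-tD^-D^+}$ does not have finite propagation, so one instead writes it via a finite-propagation approximation (Fourier transform of a compactly supported function of $D$) up to an arbitrarily small norm error, exactly as in the finite-propagation-speed arguments already used in the excerpt, and the $Q_c$ term is handled the same way using $\hat\chi$-style compactly supported distributions; $G$-invariance is inherited from $G$-invariance of $D$ and of the (chosen $G$-invariant) cutoffs $\psi_j$, $\varphi_j$.

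I expect the main obstacle to be the finite-propagation point: $R$ and $R'$ as literally written involve $Q(t) = \frac{1-e^{-tD^-D^+}}{D^-D^+}D^-$ and $Q_c = (D_c^-D_c^+)^{-1}D_c^-$, neither of which has finite propagation, so one cannot directly say $S_0 \in C^*(X,M)^G$ from the definition of the Roe algebra; the fix is to approximate these functions of $D$ in operator norm by functions whose Fourier transforms (or the $\hat\chi$-type distributions) are compactly supported, which gives finite-propagation operators approximating $S_0$, $S_1$, $S_0'$ arbitrarily well, so that the limit lies in the norm-closed algebra $C^*(X,M)^G$ — this is precisely the mechanism used for $b(D)$ and $F_T$ earlier in the paper, and the text tells us the argument "goes verbatim as the one for Lemma 4.5 in \cite{HWW19equivariantAPSindexforproperaction}," so the real content is bookkeeping: verifying that after the telescoping the only surviving terms are localized near $M$, which is where the carefully chosen nesting $X_{[0,\frac14]} \subset X_{[0,\frac7 8]} \subset X_{[0,1]}$ of the supports of $\psi_1, \varphi_1$ (and the complementary nesting for $\psi_2, \varphi_2$) does all the work. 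A secondary, more routine obstacle is justifying smoothness of the kernels uniformly — this uses bounded geometry, which is why the paper imposes that hypothesis in this subsection.
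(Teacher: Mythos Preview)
Your outline is essentially correct and aligns with the argument in the cited Lemma~4.5 of \cite{HWW19equivariantAPSindexforproperaction}; the paper itself gives no proof beyond that citation. One inaccuracy worth flagging: the terms $[D^+,\varphi_j]\,Q\,\psi_j$ appearing in the expansion of $S_1$ do \emph{not} vanish. The identity $[D,\varphi_j]\psi_j=0$ only says that the multiplication operators $[D,\varphi_j]$ and $\psi_j$ have disjoint supports; with the nonlocal operator $Q$ in between, the composite is generally nonzero. What the disjointness of supports \emph{does} buy you is that $[D^+,\varphi_j]\,Q\,\psi_j$ has a smooth Schwartz kernel, since a pseudodifferential operator of order~$-1$ sandwiched between cutoffs with disjoint supports is smoothing. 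For $S_0$ the surviving commutator terms are instead of the form $\varphi_j\,Q\,[D^+,\psi_j]$ and $\varphi_j\,Q_c\,[D^+,\psi_j]$, where the cutoffs \emph{do} overlap, so that off-diagonal argument is unavailable term by term; there one uses $[D^+,\psi_1]=-[D^+,\psi_2]$ together with the fact that $Q$ and $Q_c$, being parametrices for the same elliptic operator on the overlap region, differ by a smoothing operator. Apart from this, your discussion of support near $M$, norm-approximation by finite-propagation operators to land in $C^*(X,M)^G$, $G$-invariance, and the role of bounded geometry is accurate.
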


	%The higher index
	%\[
	%\ind_G: K_1(M(C^*(X, M)^G)/C^*(X, M)^G)\rightarrow K_0(C^*(X, M)^G)\cong K_0(C^*_r(G))
	%\]
	%is defined to be the $K$-theoretic boundary map associated to the short exact sequence
	%\[
	%0\rightarrow C^*(X, M)^G\rightarrow \mathcal M(C^*(X, M)^G)\rightarrow \mathcal M(C^*(X, M)^G)/C^*(X, M)^G\rightarrow 0.
	%\]
	
	Furthermore, by the same argument in \cite{HWW19equivariantAPSindexforproperaction}, one can show that $S_0^2$, $S_1^2$, $(S_0')^2$ and $S_0$, $S_1$, $S'_0$ are all $g$-trace classes with smooth Schwartz kernel when the conjugacy class of $g$ has polynomial growth, and when the manifold $X$ is assumed to have bounded geometry. Note that we have assumed that the positive scalar curvature outside compact subset $M$ is uniformly bounded below.
	
The following proposition has been proven by Hochs, Wang, and Wang in \cite[Lemma 5.2]
{HWWanequiapsindthemforpropacII}. For the sake of the completion, we still sketch the idea of the proof below.
	\begin{Prop}\label{prop:formula for higher index}
		The higher index for $D$ is given by
		%\[
		%\ind_G D:=\partial\begin{bmatrix}0 & R\\ D^+ & 0\end{bmatrix}=\begin{bmatrix}S_0^2 & S_0(S_0+1)R \\ S_1 D^+ & 1-S_1^2\end{bmatrix}-\begin{bmatrix}0 & 0 \\ 0 & 1\end{bmatrix}
		%\]
		\[
		\ind_G (D)=\begin{bmatrix}S_0^2 & S_0(S_0+1)R \\ S_1 D^+ & 1-S_1^2\end{bmatrix}-\begin{bmatrix}0 & 0 \\ 0 & 1\end{bmatrix}
		\]
	\end{Prop}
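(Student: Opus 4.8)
The plan is to verify directly that the stated $2\times 2$ operator matrix, call it $P$, is an idempotent in (a matrix algebra over) the unitization of $C^*(X,M)^G$ and that its formal difference with $\begin{bmatrix}0&0\\0&1\end{bmatrix}$ represents $\ind_G(D)$ as defined in Definition \ref{def: index I}. The key algebraic input is the standard parametrix identity: if $R$ is a parametrix for $D^+$ with $S_0 = 1 - RD^+$ and $S_1 = 1 - D^+R$, then the operator
\[
L = \begin{bmatrix} S_0^2 & S_0(S_0+1)R \\ S_1 D^+ & 1 - S_1^2 \end{bmatrix}
\]
is always an idempotent, and $[L] - \left[\begin{smallmatrix}0&0\\0&1\end{smallmatrix}\right]$ is the index class of $D^+$ in $K_0$ of whatever algebra contains $S_0, S_1$ and $S_0(S_0+1)R$, $S_1D^+$. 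This is the classical ``Connes--Skandalis'' projection formula. So the first step is to recall that identity and the routine computation $L^2 = L$, which uses only $RD^+ = 1 - S_0$, $D^+R = 1 - S_1$, $D^+ S_0 = S_1 D^+$, and $S_0 R = R S_1$ (the latter two being immediate from $D^+(1-RD^+) = (1-D^+R)D^+$).

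Second, I would check that all four entries of $L$ actually lie in the relevant algebra: $S_0^2, 1-S_1^2 \in C^*(X,M)^G$ by the Lemma just cited (indeed $S_0, S_1 \in C^*(X,M)^G$), and the off-diagonal entries $S_0(S_0+1)R$ and $S_1D^+$ must be shown to be bounded with the correct support/decay; here one uses $S_0(S_0+1)R = S_0(S_0+1)R$ together with the fact that $S_0 \in C^*(X,M)^G$ absorbs the unbounded $R$ into something locally compact and supported near $M$, and similarly $S_1 D^+ = D^+ S_0 \cdot(\text{correction})$ or more cleanly $S_1D^+ = D^+ - D^+RD^+$, which one rewrites using the smoothing property of $S_1$. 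The cleaner route, and the one I expect the authors take, is to cite \cite[Lemma 5.2]{HWWanequiapsindthemforpropacII} directly for membership and simply record the formula, since the excerpt explicitly says ``we still sketch the idea of the proof.''

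Third, I would identify this index class with $\ind_G(D)$ of Definition \ref{def: index I}. Since $K_0(C^*(X,M)^G)\cong K_0(C^*_r(G))$ when $M/G$ is compact, and the higher index of a Dirac operator is independent of the choice of parametrix (any two parametrices differ by an element that does not change the $K$-theory class, via a homotopy through parametrices), the class $[L] - \left[\begin{smallmatrix}0&0\\0&1\end{smallmatrix}\right]$ agrees with the class built from $b(D)$ in Definition \ref{def: index I}: both compute the Baum--Connes index / Mishchenko--Fomenko index of $D^+$. Concretely, one can interpolate between the parametrix $R = \varphi_1 Q\psi_1 + \varphi_2 Q_c\psi_2$ and the one implicitly used in Definition \ref{def: index I} (built from $b_\pm$) through a path of parametrices, each giving an idempotent of the form $L$, yielding a homotopy of idempotents in the unitized algebra, hence equality of $K$-theory classes.

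The main obstacle is the analytic bookkeeping in step two: showing that the off-diagonal entries $S_0(S_0+1)R$ and $S_1D^+$ are genuinely bounded operators lying in $C^*(X,M)^G$ (the issue being that $R$ involves the unbounded $Q_c = (D_c^-D_c^+)^{-1}D_c^-$ and $D^+$ is unbounded). This is handled exactly as in the cited Hochs--Wang--Wang argument: one exploits that $S_0$ and $S_1$ have \emph{smooth} Schwartz kernels supported near $M$ (because $[D,\varphi_j]\psi_j = 0$ forces the parametrix defect to be concentrated in the tubular neighbourhood of $\partial M$), so that composing with $R$ or $D^+$ still yields a locally compact finite-propagation operator supported near $M$; the uniform positive scalar curvature lower bound at infinity is what makes $Q_c$ bounded and ensures the decay estimates. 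Since the excerpt only asks for a sketch and cites \cite[Lemma 5.2]{HWWanequiapsindthemforpropacII}, I would present step one (the algebraic idempotent identity) in full, state step two as a consequence of the preceding Lemma and the cited reference, and dispatch step three by the parametrix-independence of the higher index.
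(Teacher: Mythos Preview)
Your plan is correct and would certainly establish the proposition; it is the standard Connes--Skandalis route (verify $L^2=L$ algebraically from the parametrix relations, check the entries land in $C^*(X,M)^G$, then invoke parametrix-independence of the index class). The paper, however, does not do this. Its entire sketch consists of specifying a particular normalizing function $b$ to plug into Definition~\ref{def: index I}: namely $b(x)=\mathrm{sgn}(x)\sqrt{1-f^2(x)}$ with $f$ a tent function supported in $[-\sqrt{h_0}/2,\sqrt{h_0}/2]$. The intended point is that since the class in Definition~\ref{def: index I} is independent of the admissible $b$, one may pick $b$ so that $b(D)^2-1$ is a genuine smoothing operator supported near $M$ and the resulting idempotent visibly has the Connes--Skandalis shape; the remaining identification with the specific heat-kernel parametrix $R=\varphi_1 Q\psi_1+\varphi_2 Q_c\psi_2$ is then the content of the cited \cite[Lemma~5.2]{HWWanequiapsindthemforpropacII}.

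So the difference is one of emphasis: you argue via abstract parametrix-independence and a homotopy between $R$ and the parametrix implicit in $b(D)$, whereas the paper instead freezes $b$ and defers the matching to the cited reference. Your version is more self-contained and makes the algebra explicit; the paper's version is shorter but leaves the reader to consult Hochs--Wang--Wang for the actual computation. Either is acceptable here, and your step-one algebra (the identities $D^+S_0=S_1D^+$, $S_0R=RS_1$, $L^2=L$) is a useful addition the paper omits entirely.
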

	\bpr
	This can be seen by choosing the real function $b: (-\infty, \infty)\to [-1,1]$ in Subsection \ref{sec:higher index a definition} to be \[
	b(x)= \left\{\begin{array}{cc}
	\sqrt{1- f^2(x)} & x\geq 0 \\
	-\sqrt{1- f^2(x)} & x<0
	\end{array}
	\right.,
	\]
	where $f: (-\infty, \infty)\to [-1,1] $ is a real function defined as
	\[
	f(x)= \left\{\begin{array}{cc}
	0 & x\geq \frac{\sqrt{h_0}}{2} \\
	1-\frac{2x}{\sqrt{h_0}} & x\in [0,\frac{\sqrt{h_0}}{2}] \\
	1+ \frac{2x}{\sqrt{h_0}} & x\in [-\frac{\sqrt{h_0}}{2}, 0]\\
	0 & x<-\frac{\sqrt{h_0}}{2}
	\end{array}
	\right..
	\]
	\epr

	\subsection{Formula for the delocalized eta-invariant at infinity}\label{subsec:formulafordelocalizedeta}
	
	%\begin{Def}
	%Let $g\in G$ be a nontrivial element. The delocalized eta-invariant at $g\in G$ is
	%\[
	%\eta_{g, \infty}(h)=\lim_{t\to 0}2\int_t^{\infty}\tr_g(e^{-sD_c^-D_c^+}D_c^-\psi_2')ds.
	%\]
	%\end{Def}
	
	The main goal of this subsection is to  prove a formula for the delocalized eta invariant at infinity  in light of the techniques developed in \cite{HWW19equivariantAPSindexforproperaction}, along with which we establish an Atiyah-Patodi-Singer type index theorem for Dirac operator on manifold with a metric with uniform positive scalar curvature at infinity. We mention that with the technique developed in \cite{WangWang16jdg}, one can generalize the main result of this subsection to the case of proper actions.
	
%	Let $\sigma$ be the grading operator on the spinor bundle of $X$.
	%The well-definedness of $\eta_{g,\infty}$ follows from the following APS-index theorem.
	
	\begin{Thm}\label{thm:formula for delocalized eta}
		Let $g$ be an element of $G$ whose conjugacy class has polynomial growth. Then
		\begin{equation}\label{eq:main eq 1}
			\tr_{g}(\ind_G (D))=\int_{M^g}I(g)-\lim_{t\to 0}\int_t^{\infty}\tr_g(e^{-sD_c^-D_c^+}D_c^- [D^+,\psi_2])ds,
		\end{equation}
		where $M^g$ is the fixed point submanifold  of $g$, and $I(g)$ is the integrand as in the Atiyah-Segal-Singer fixed point theorem.
		In particular, if $g\neq e$, we have
		\begin{equation}\label{eq:main eq 2}
			\frac{1}{2}\eta_{g, \infty}(D)=-\tr_{g}(\ind_G (D))=\lim_{t\to 0}\int_t^{\infty}\tr_g(e^{-sD_c^-D_c^+}D_c^-  [D^+,\psi_2])ds.
		\end{equation}
		The second integral on the right hand side of \eqref{eq:main eq 1}
		and the integral on the right hand side of \eqref{eq:main eq 2} are independent from the choice of the cutoff function $\psi_2.$
	\end{Thm}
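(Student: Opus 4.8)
The plan is to adapt the parametrix method of Hochs--Wang--Wang (cf.\ \cite[\S5]{HWW19equivariantAPSindexforproperaction}, \cite{HWWanequiapsindthemforpropacII}) to the present noncompact situation, starting from the explicit idempotent representative of $\ind_G(D)$ furnished by Proposition \ref{prop:formula for higher index}. I will make the dependence of the parametrix on the heat parameter explicit, writing $R=R_t$, $S_0=S_0(t)=1-R_tD^+$, $S_1=S_1(t)=1-D^+R_t$, with $Q=Q(t)=\frac{1-e^{-tD^-D^+}}{D^-D^+}D^-$ and $Q_c=(D_c^-D_c^+)^{-1}D_c^-$. Since $\tr_g$ is a trace on the algebra of $g$-trace-class operators and the off-diagonal blocks of the formal difference of idempotents in Proposition \ref{prop:formula for higher index} do not contribute to $\tr_g$, one gets
\[
\tr_g(\ind_G(D))=\tr_g\bigl(S_0(t)^2\bigr)-\tr_g\bigl(S_1(t)^2\bigr)\qquad\text{for every }t>0,
\]
a quantity which is therefore independent of $t$; the whole point is to evaluate it by sending $t\to0$ on the right.

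First I would unwind $S_0(t)$ and $S_1(t)$. The functional calculus for the self-adjoint operators $D^\mp D^\pm$ gives $Q(t)D^+=1-e^{-tD^-D^+}$ and $D^+Q(t)=1-e^{-tD^+D^-}$, while $Q_cD_c^+=1$ and $D_c^+Q_c=1$ on $C:=X\backslash M$; substituting these, together with the cut-off relations $\varphi_j\psi_j=\psi_j$, $[D^+,\varphi_j]\psi_j=0$, $\psi_1+\psi_2=1$ and $[D^+,\psi_1]=-[D^+,\psi_2]$, in the expressions for $S_0(t)$ and $S_1(t)$ rewrites each as a heat term localized near $M$ by the cut-offs $\varphi_1,\psi_1$, plus collar terms built from $Q(t)[D^+,\psi_2]$ and $Q_c[D^+,\psi_2]$, where $[D^+,\psi_2]$ is supported in the tubular region $X_{[1/4,3/4]}\subset C$ on which $D$ coincides with the invertible operator $D_c$. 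Squaring, applying $\tr_g$, and discarding the cross terms which a finite-propagation / off-diagonal heat-decay estimate (licensed by the bounded-geometry hypothesis, exactly as in \cite[\S5]{HWW19equivariantAPSindexforproperaction}) shows are $g$-trace negligible as $t\to0$, this reorganizes into an identity of the form
\[
\tr_g\bigl(S_0(t)^2\bigr)-\tr_g\bigl(S_1(t)^2\bigr)=\mathrm{Int}(t)-\int_t^{\infty}\tr_g\bigl(e^{-sD_c^-D_c^+}D_c^-[D^+,\psi_2]\bigr)\,ds+o(1)\qquad(t\to0),
\]
where $\mathrm{Int}(t)$ is a McKean--Singer-type delocalized supertrace of the heat operators of $D$ cut off near $M$. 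The collar integral is produced by the elementary identity, valid on $C$,
\[
Q_c-\frac{1-e^{-tD_c^-D_c^+}}{D_c^-D_c^+}D_c^-=(D_c^-D_c^+)^{-1}e^{-tD_c^-D_c^+}D_c^-=\int_t^{\infty}e^{-sD_c^-D_c^+}D_c^-\,ds,
\]
which records the discrepancy between the true inverse $Q_c$ appearing in $R_t$ on $C$ and the finite-time heat parametrix; the $s$-integral converges exponentially at $s=\infty$ because $D_c^-D_c^+\geq\frac{h_0}{4}>0$, and $[D^+,\psi_2]$ has compact support inside $C$.

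Next I would pass to the limit. By the delocalized McKean--Singer argument together with Getzler rescaling, the small-time limit of $\mathrm{Int}(t)$ localizes on the fixed-point set $X^g$; since the cut-offs confine everything to a relatively compact collar of $M$, only the part of $X^g$ lying in $M$, namely $M^g$, contributes, and $\mathrm{Int}(t)\to\int_{M^g}I(g)$, with $I(g)$ the Atiyah--Segal--Singer fixed-point integrand (the $\hat A$-form when $g=e$). As the left-hand side is the constant $\tr_g(\ind_G(D))$, the remaining limit on the right necessarily exists, and this gives \eqref{eq:main eq 1}. When $g\neq e$, the $G$-action on $X$ being free forces $X^g=\emptyset$, hence $M^g=\emptyset$ and $\int_{M^g}I(g)=0$; combined with Definition \ref{def: delocalized eta}, which reads $\frac{1}{2}\eta_{g,\infty}(D)=-\tr_g(\ind_G(D))$, this is \eqref{eq:main eq 2}. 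Finally, the entire derivation applies unchanged for any admissible choice of the cut-off functions, whereas $\tr_g(\ind_G(D))$ and $\int_{M^g}I(g)$ are manifestly independent of it; hence so is the eta-integral. (Alternatively: for two admissible choices $\psi_2,\psi_2'$ the operator $[D^+,\psi_2-\psi_2']$ has compact support in $C$, and an integration by parts in $s$ together with $\tr_g([D_c^-D_c^+,\,\cdot\,])=0$ shows the two eta-integrals coincide.)

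The main obstacle is the analytic content buried in the last two steps: one must show that the terms discarded in the reorganization are genuinely $g$-trace negligible as $t\to0$, that $\mathrm{Int}(t)$ and the collar integral each admit the asserted $t\to0$ limit, and that all the intermediate operators (namely the $S_i(t)$, their squares, and $e^{-sD_c^-D_c^+}D_c^-[D^+,\psi_2]$) are $g$-trace class with smooth Schwartz kernels. This is exactly where the bounded-geometry hypothesis, the uniform positive lower bound $h_0$ for the scalar curvature outside $M$, and the polynomial-growth hypothesis on $\langle g\rangle$ are used; concretely one transplants the Gaussian off-diagonal heat estimates and the equivariant local index theorem of \cite{HWW19equivariantAPSindexforproperaction, HWWanequiapsindthemforpropacII} to the noncompact setting with uniform positive scalar curvature at infinity.
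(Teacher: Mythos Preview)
Your overall strategy---the Hochs--Wang--Wang parametrix scheme starting from Proposition \ref{prop:formula for higher index} and then sending $t\to0$---is exactly the paper's, and your identification of the collar integral via $Q_c-\frac{1-e^{-tD_c^-D_c^+}}{D_c^-D_c^+}D_c^-=\int_t^{\infty}e^{-sD_c^-D_c^+}D_c^-\,ds$ is the right mechanism. Your final paragraph on the analytic prerequisites is also accurate.

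Where you diverge from the paper is in the reduction step. You propose to expand $S_0(t)^2$ and $S_1(t)^2$ directly, then ``discard the cross terms'' using off-diagonal heat decay to get an identity up to $o(1)$. The paper avoids this entirely by two clean algebraic devices. First, the trace identity
\[
\tr_g(S_0^2)-\tr_g(S_1^2)=\tr_g(S_0)-\tr_g(S_1)
\]
holds \emph{exactly} for every $t$ (Lemma \ref{lem:eliminate square}): one computes $\tr_g(S_0-S_0^2)=\tr_g(S_0RD^+)=\tr_g(RS_1D^+)=\tr_g(S_1D^+R)=\tr_g(S_1-S_1^2)$ using only $S_0R=RS_1$ and the trace property. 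Second, the paper introduces an auxiliary parametrix $R'=\psi_1 Q\varphi_1+\psi_2 Q_c\varphi_2$ (cut-offs swapped) and sets $S_0'=1-R'D^+$; the decomposition $\tr_g(S_0)-\tr_g(S_1)=\bigl[\tr_g(S_0)-\tr_g(S_0')\bigr]+\bigl[\tr_g(S_0')-\tr_g(S_1)\bigr]$ then separates the boundary and interior contributions with no error terms (Lemmas \ref{lem:contribution of inner} and \ref{lem:contribution of boundary}). The point is that in $S_0'$ and $S_1$ the commutator terms $[D^+,\varphi_j]\psi_j$ vanish identically, so $\tr_g(S_0')-\tr_g(S_1)$ reduces to the McKean--Singer expression $\tr_g\bigl((e^{-tD^-D^+}-e^{-tD^+D^-})\psi_1\bigr)$, while $\tr_g(S_0)-\tr_g(S_0')$ isolates the eta integral via a finite-propagation comparison of $Q$ and $Q_c'$ on the collar.

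Your ``discard cross terms as $o(1)$'' step is thus both unnecessary and the weakest link in your sketch: it is not actually what \cite{HWW19equivariantAPSindexforproperaction} does, and making it rigorous would require controlling nine-term expansions of each square, whereas the paper's two algebraic moves bypass this completely. I would recommend replacing that paragraph with the trace-identity reduction and the $S_0'$ device; the rest of your outline then goes through as written.
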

	
	We divide the proof of Theorem \ref{thm:formula for delocalized eta} into the following series of lemmas.
	
	%\begin{Lem}\label{lem:basic trace formula for index}
	%	Recall that $S^2_0$ and $S^2_1$ are both $g$-trace classes, then
	%	\[
	%	\tr_{g,*} (\ind_G D)= \tr_g (S_0^2) -\tr_g (S^2_1).
	%	\]
	%\end{Lem}
	%\bpr
	%This lemma follows basic index theory.
	%\epr

	\begin{Lem}\label{lem:expanse s0s1}
		We have
		\begin{eqnarray}
			S_0 & = & \phi_1 (1- QD^+) \psi_1 + \phi_1 Q  [D^+,\psi_1]  + \phi_2 Q_c [D^+,\psi_2] ,\\
			S_0' & = & \psi_1 (1- QD^+)   \phi_1 + \psi_1 Q  [D^+,\phi_1]  + \psi_2 Q_c [D^+,\phi_2] ,\\
			S_1 & = & \phi_1  (1- D^+ Q)  \psi_1 - [D^+,\phi_1]  Q  \psi_1  - [D^+,\phi_2] Q_c  \psi_2 .
		\end{eqnarray}
	\end{Lem}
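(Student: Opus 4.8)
The plan is to expand the definitions $S_0 = 1 - RD^+$, $S_0' = 1 - R'D^+$, $S_1 = 1 - D^+R$ with $R = \varphi_1 Q \psi_1 + \varphi_2 Q_c \psi_2$ and $R' = \psi_1 Q \varphi_1 + \psi_2 Q_c \varphi_2$, and then use the commutator identity to rewrite $1 - RD^+$ as a sum of a term that is "morally" the parametrix remainder and two commutator terms. Concretely, for the first identity I would write
\[
S_0 = 1 - RD^+ = 1 - \varphi_1 Q \psi_1 D^+ - \varphi_2 Q_c \psi_2 D^+,
\]
and then use $\psi_j D^+ = D^+ \psi_j - [D^+, \psi_j]$ to pull $D^+$ to the left of each cutoff. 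The key bookkeeping fact is that $\psi_1 + \psi_2 = 1$, so $1 = \varphi_1 \psi_1 \cdot 1 + \varphi_2 \psi_2 \cdot 1$ can be distributed across the two summands (using $\varphi_j \psi_j = \psi_j$, stated right before the lemma), letting me match up the leading $1$ with each parametrix block. After collecting terms, the $QD^+$ (resp.\ $Q_c D^+$) pieces combine with the matching $1$ into $\varphi_1(1 - QD^+)\psi_1$ (resp.\ $\varphi_2(1 - Q_c D^+)\psi_2$), but since $Q_c = (D_c^- D_c^+)^{-1} D_c^-$ is the genuine inverse on $X \setminus M$, one has $1 - Q_c D^+ = 0$ on the support of $\psi_2$, which kills the $\varphi_2(1 - Q_c D^+)\psi_2$ term and leaves only $\varphi_2 Q_c [D^+, \psi_2]$.

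Second, I would do the same for $S_0' = 1 - R'D^+$, which is literally the previous computation with the roles of $\psi_j$ and $\varphi_j$ interchanged (and $\varphi_j \psi_j = \psi_j$ still gives what is needed since $\varphi_j = 1$ wherever $\psi_j \neq 0$). For $S_1 = 1 - D^+R = 1 - D^+\varphi_1 Q \psi_1 - D^+ \varphi_2 Q_c \psi_2$ the manipulation is the mirror image: I pull $D^+$ through $\varphi_j$ from the left via $D^+\varphi_j = \varphi_j D^+ + [D^+, \varphi_j]$, and again use $1 - D^+ Q_c = 0$ (equivalently $D_c^+ Q_c = \mathrm{Id}$) on the relevant support together with $\varphi_j \psi_j = \psi_j$ to collapse the second block, leaving $\varphi_1(1 - D^+ Q)\psi_1 - [D^+, \varphi_1] Q \psi_1 - [D^+, \varphi_2] Q_c \psi_2$. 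Note the statement writes $\phi_j$ where the running text writes $\varphi_j$; I take these to be the same functions, and I would state this identification at the start.

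The only genuinely delicate point is the interchange of $D^+$ with the operators $Q$, $Q_c$ and the cutoffs at the level of unbounded operators: one must check that all products and commutators are densely defined and that the formal identities hold on a common core (smooth compactly supported spinors), which is legitimate because $X$ is complete, $D^-D^+$ and $D_c^- D_c^+$ have the stated self-adjoint/Friedrichs extensions, the cutoff functions are smooth with the stated support conditions, and the commutators $[D^+, \psi_j]$, $[D^+, \varphi_j]$ are bounded (zeroth-order, compactly supported, smooth coefficients). Since the excerpt explicitly says the proof is the verbatim analogue of \cite[Lemma 5.2]{HWWanequiapsindthemforpropacII} and the surrounding lemma already records that $S_0, S_1, S_0'$ have smooth kernels and lie in $C^*(X,M)^G$, I would not belabor these domain issues but simply cite that the formal algebraic identity above, carried out on the core, yields the claimed decompositions. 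The main obstacle is thus not conceptual but purely the careful tracking of which cutoff multiplies which parametrix block and making sure the "$1$" is correctly distributed using $\psi_1 + \psi_2 = 1$ and $\varphi_j \psi_j = \psi_j$.
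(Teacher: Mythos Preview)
Your proposal is correct and is precisely the ``direct computation'' the paper has in mind: expand $S_0=1-RD^+$, $S_0'=1-R'D^+$, $S_1=1-D^+R$, commute $D^+$ past the cutoffs, rewrite $1=\varphi_1\psi_1+\varphi_2\psi_2$ using $\psi_1+\psi_2=1$ and $\varphi_j\psi_j=\psi_j$, and kill the $\varphi_2(1-Q_cD^+)\psi_2$ (resp.\ $\psi_2(1-Q_cD^+)\varphi_2$, $\varphi_2(1-D^+Q_c)\psi_2$) term because $Q_c$ is the genuine inverse of $D_c^+$. Your identification $\phi_j=\varphi_j$ is the intended reading; the only inaccuracy is that the reference to \cite[Lemma 5.2]{HWWanequiapsindthemforpropacII} in the surrounding text pertains to Proposition~\ref{prop:formula for higher index}, not to this lemma.
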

	\bpr
	Direct computation.
	\epr
	
	From now on, to emphasize the dependence of operators $Q$ (resp. $Q_c,\ S_0,\ S_0',\ S_1$) on $t$, we write $Q(t)$ (resp. $Q_c(t),\ S_0(t),\ S_0'(t),\ S_1(t)$) for $Q$ (resp. $Q_c,\ S_0,\ S_0',\ S_1$).
	
	\begin{Lem}\label{lem:eliminate square}
		We have
		\[
		\tr_g (S_0^2)- \tr_g (S_1^2)= \tr_g (S_0)-\tr_g(S_1).
		\]
	\end{Lem}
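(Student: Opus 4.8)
The plan is to exploit the standard fact that for parametrices $R$, $R'$ of $D^+$, the index defect operators $S_0 = 1 - RD^+$, $S_1 = 1 - D^+R$ satisfy algebraic identities relating $S_0$, $S_1$ and the ``other side'' parametrix $R'$, and then to pass these identities through the trace $\tr_g$ using its trace property on $g$-trace class operators. First I would record the two elementary operator identities: since $R$ and $R'$ are both parametrices for the same operator $D^+$, we have
\[
S_0' = 1 - R'D^+ = (1-R'D^+), \qquad S_1 = 1 - D^+R,
\]
and, crucially, $R - R' = R(1 - D^+R') - (1-RD^+)R' = R S_1' - S_0 R'$ (with $S_1' = 1 - D^+R'$); more usefully, one checks directly from Lemma \ref{lem:expanse s0s1} and the relations $\varphi_j\psi_j = \psi_j$, $[D,\varphi_j]\psi_j = 0$ that $R'D^+$ and $D^+R$ differ from $RD^+$ and $R'D^+$ respectively only by operators already controlled in $C^*(X,M)^G$, so that in particular $S_0 - S_0'$, $S_1 - S_0'$, and the relevant products are all $g$-trace class with smooth kernel.

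The core computation is then the following. Using $D^+ R = 1 - S_1$ and $R D^+ = 1 - S_0$, one computes
\[
S_1^2 = (1 - D^+R)^2 = 1 - 2D^+R + D^+RD^+R = 1 - 2D^+R + D^+(1-S_0)R = 1 - D^+R - D^+S_0 R,
\]
and hence $S_1 - S_1^2 = D^+ S_0 R$. Symmetrically, $S_0 - S_0^2 = R D^+ S_0 \cdot(\cdot)$; more precisely $S_0^2 = (1-RD^+)^2 = 1 - RD^+ - R D^+ S_0' \cdot$, so one gets $S_0 - S_0^2 = R D^+ S_0$ after using $D^+R$ on the appropriate side. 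Thus
\[
\big(\tr_g(S_0) - \tr_g(S_0^2)\big) - \big(\tr_g(S_1) - \tr_g(S_1^2)\big) = \tr_g(R D^+ S_0) - \tr_g(D^+ S_0 R).
\]
By the trace property of $\tr_g$ on $g$-trace class operators (valid here since, as recalled in the paragraph preceding Proposition \ref{prop:formula for higher index}, $S_0$, $S_1$, $S_0^2$, $S_1^2$ and the pertinent products have smooth kernels and are $g$-trace class, $X$ having bounded geometry and $\langle g\rangle$ polynomial growth), we have $\tr_g(RD^+S_0) = \tr_g(D^+S_0 R)$ provided each factorization $RD^+S_0 = R\cdot(D^+S_0)$ and $D^+S_0 R = (D^+S_0)\cdot R$ has at least one $g$-trace class factor, which holds because $S_0 \in C^*(X,M)^G$ and the parametrix $R$ has the required decay. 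Rearranging gives $\tr_g(S_0^2) - \tr_g(S_1^2) = \tr_g(S_0) - \tr_g(S_1)$.

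The main obstacle I anticipate is not the algebra but the \emph{justification of the cyclicity step}: $\tr_g(AB) = \tr_g(BA)$ is not automatic for merely $g$-trace class operators, and one must verify that the specific products appearing ($RD^+S_0$ versus $D^+S_0R$, and the analogous ones on the $S_0$ side) admit a factorization into two operators one of which is genuinely $g$-trace class while the other is bounded with finite propagation, or is itself $g$-trace class — so that the approximation-by-finite-propagation argument (as in \cite{HWW19equivariantAPSindexforproperaction}) applies. This hinges on the fact, established earlier, that $S_0, S_1, S_0'$ lie in $C^*(X,M)^G$ with smooth kernels and are $g$-trace class; the factor $D^+$ is unbounded, so one should absorb it against $Q$ or $Q_c$ inside $S_0$ (using $D^+R = 1-S_1$) rather than treating it as a free factor, which is precisely why rewriting everything via Lemma \ref{lem:expanse s0s1} is the right bookkeeping. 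Once that is in place, the identity follows by the short computation above.
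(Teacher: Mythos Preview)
Your core argument is correct and is essentially the paper's own proof: the paper writes $S_0 - S_0^2 = S_0RD^+ = RS_1D^+$ (using the intertwining $S_0R = RS_1$) and then applies the trace property once to get $\tr_g(RS_1D^+) = \tr_g(S_1D^+R) = \tr_g(S_1 - S_1^2)$, which is the same single cyclicity move you make with the factorization $RD^+S_0$ versus $D^+S_0R$. Your opening paragraph about $R'$ and $S_0'$ is extraneous --- those operators play no role in this lemma --- and while your caution about justifying cyclicity in the presence of the unbounded factor $D^+$ is reasonable, the paper treats that step as routine given the smoothing and $g$-trace class properties already recorded.
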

	\bpr
	To begin with, note that
	\[
	\tr_g (S_0)-\tr_g(S_0^2)
	= \tr_g(S_0-S_0^2)
	= \tr_g(S_0 R D^+).
	\]
	By definition, $S_0 R D^+= R S_1 D^+ .$ Moreover, applying trace property of $\tr_g$, we obtain
	\[\tr_g (R S_1 D^+)
	= \tr_g (S_1 D^+ R)
	= \tr_g (S_1(1-S_1))
	= \tr_g (S_1)- \tr_g(S_1^2).\]
	Hence the lemma follows.
	\epr
	
	\begin{Lem}\label{lem:contribution of inner}
		\[  \lim\limits_{t\to 0^+}[	\tr_g(S_0') - \tr_g (S_1) ]=  \int_{M^g} I(g) \]
	\end{Lem}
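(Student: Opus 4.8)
The plan is to compute $\tr_g(S_0'(t))-\tr_g(S_1(t))$ exactly for each fixed $t>0$, reduce it to a McKean--Singer heat supertrace localized near $M$, and then let $t\to 0^+$. The starting point is Lemma \ref{lem:expanse s0s1}. Since $Q(t)=\frac{1-e^{-tD^-D^+}}{D^-D^+}D^-$, functional calculus together with the intertwining identity $D^+f(D^-D^+)=f(D^+D^-)D^+$ gives $1-Q(t)D^+=e^{-tD^-D^+}$ on $L^2(X,S^+)$ and $1-D^+Q(t)=e^{-tD^+D^-}$ on $L^2(X,S^-)$. Feeding these into Lemma \ref{lem:expanse s0s1} and subtracting, $\tr_g(S_0'(t))-\tr_g(S_1(t))$ becomes the difference $\tr_g(\psi_1 e^{-tD^-D^+}\varphi_1)-\tr_g(\varphi_1 e^{-tD^+D^-}\psi_1)$ plus the four commutator terms $\tr_g(\psi_1 Q(t)[D^+,\varphi_1])$, $\tr_g([D^+,\varphi_1]Q(t)\psi_1)$, $\tr_g(\psi_2 Q_c[D^+,\varphi_2])$ and $\tr_g([D^+,\varphi_2]Q_c\psi_2)$.

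Next I would show that every commutator term vanishes, for all $t>0$. Each $[D^+,\varphi_j]=c(d\varphi_j)$ is a bounded zeroth-order operator supported in $\mathrm{supp}(d\varphi_j)$; the support conditions on the cut-offs give $\mathrm{supp}(d\varphi_1)\subseteq X_{[7/8,1]}$, disjoint from $\mathrm{supp}(\psi_1)\subseteq X_{\leq 3/4}$, and $\mathrm{supp}(d\varphi_2)\subseteq X_{[1/16,1/8]}$, disjoint from $\mathrm{supp}(\psi_2)\subseteq X_{\geq 1/4}$, so $\psi_j[D^+,\varphi_j]=[D^+,\varphi_j]\psi_j=0$. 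All operators involved are $g$-trace class (as established earlier in this section, together with the Gaussian bound on the Schwartz kernel of $Q(t)$ and the exponential off-diagonal decay of $Q_c$, which uses bounded geometry of $X$ and $D_c^-D_c^+\geq h_0/4$), so the trace property of $\tr_g$ lets me cyclically permute the factors in each commutator term, producing the $g$-trace of an operator carrying a factor $\psi_j[D^+,\varphi_j]$ or $[D^+,\varphi_j]\psi_j$, hence $0$. Using cyclicity once more and $\varphi_1\psi_1=\psi_1$ to absorb $\varphi_1$, I get, for every $t>0$,
\[
\tr_g(S_0'(t))-\tr_g(S_1(t))=\tr_g\bigl(\psi_1 e^{-tD^-D^+}\bigr)-\tr_g\bigl(\psi_1 e^{-tD^+D^-}\bigr).
\]

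Finally I would let $t\to 0^+$. Expressed through Schwartz kernels, the right-hand side is $\sum_{h\in\langle g\rangle}\int_{\mathcal F}\psi_1(x)\bigl(\mathrm{tr}\,e^{-tD^-D^+}(x,hx)-\mathrm{tr}\,e^{-tD^+D^-}(x,hx)\bigr)\,dx$. The diagonal term $h=e$, present only when $g=e$, converges as $t\to 0^+$ to $\int_{\mathcal F}\psi_1 I(e)$ by the local index theorem (Getzler rescaling, dominated convergence being legitimate because $\psi_1$ is cocompactly supported). For $h\neq e$ the Gaussian bounds $|e^{-tD^-D^+}(x,hx)|,\,|e^{-tD^+D^-}(x,hx)|\leq Ct^{-\dim X/2}e^{-d(x,hx)^2/Ct}$, the polynomial growth of $\langle g\rangle$, and the lower bound $d(x,hx)\geq c(1+|h|)$ for $x\in\mathrm{supp}(\psi_1)$ (valid because the $G$-action is free, proper and cocompact) force the sum over $h\neq e$ to tend to $0$; more generally these same estimates localize the limit to the fixed-point submanifold $M^g$ with integrand the Atiyah--Segal--Singer density $I(g)$. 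Since $\psi_1\equiv 1$ on a neighbourhood of $M$ and the metric is a product on $\mathrm{supp}(\psi_1)\setminus M$ (so that $I(g)$, being pulled back from a lower-dimensional factor there, has vanishing top-degree part), one concludes $\lim_{t\to 0^+}\bigl[\tr_g(S_0'(t))-\tr_g(S_1(t))\bigr]=\int_{M^g}\psi_1 I(g)=\int_{M^g}I(g)$.

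The genuinely delicate ingredient is this last step --- the small-time asymptotics of the delocalized heat supertrace on the noncompact manifold $X$ --- but the cut-off $\psi_1$ confines everything to the cocompact region about $M$, so the argument is exactly the one already carried out in \cite[Section 5]{HWW19equivariantAPSindexforproperaction} and \cite{HWWanequiapsindthemforpropacII}. The only new bookkeeping relative to those references is the reduction performed in the first two paragraphs above, which disposes of the extra terms produced by the cut-offs $\varphi_j$ and $\psi_j$.
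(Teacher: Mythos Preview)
Your proof is correct and follows essentially the same route as the paper: expand $S_0'$ and $S_1$ via Lemma~\ref{lem:expanse s0s1}, kill the commutator terms using $[D^+,\varphi_j]\psi_j=0$ together with the trace property of $\tr_g$, reduce to $\tr_g(e^{-tD^-D^+}\psi_1)-\tr_g(e^{-tD^+D^-}\psi_1)$, and then invoke the local heat-kernel asymptotics. The paper compresses that last step into the single phrase ``a standard heat kernel argument'', whereas you spell out the Gaussian off-diagonal bounds and the fixed-point localization explicitly.
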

	\bpr
	By Lemma \ref{lem:expanse s0s1}, one can see
	\[
	\tr_g (S_0')  =  \tr_g ( \psi_1  (1- QD^+)   \phi_1 + \psi_1 Q  [D^+,\phi_1]  + \psi_2 Q_c [D^+,\phi_2]  ).
	\]
	However, since $[D^+,\phi_1]\psi_1 = [D^+,\phi_2]\psi_2=0$,
	\[
	\tr_g (S_0') =  \tr_g ( \psi_1  (1- QD^+)  \phi_1) =\tr_g ((1- QD^+)  \psi_1).
	\]
	Similarly,
	\[
	\tr_g (S_1) = \tr_g ((1- D^+Q) \psi_1).
	\]
	%Choose $\psi_1$ properly, such  that
	%\begin{eqnarray*}
	%&& 2\tr_g ((1- QD^+) )= \tr_g (\widetilde S_0\psi_1)+ \tr_g (\widetilde S_0 (1_{\widetilde M}-\psi_1)) = 2 \tr_g (\widetilde S_0\psi_1)\\
	%&& 2\tr_g ((1- QD^+) )= \tr_g (\widetilde S_1\psi_1)+ \tr_g (\widetilde S_1 (1_{\widetilde M}-\psi_1)) = 2 \tr_g (\widetilde S_1\psi_1).
	%\end{eqnarray*}

	Then a standard heat kernel argument shows that
	\[
	\lim\limits_{t\to 0^+} [\tr_g ( S'_0)-\tr_g ( S_1)]=  \int_{M^g}I(g).
	\]
	This proves the Lemma.
	\epr

	\begin{Lem}\label{lem:contribution of boundary}
		\[\lim\limits_{t\to 0^+}[\tr_g ( S_0) -\tr_g (S'_0)] =- \lim_{t\to 0^+}\int_t^{\infty}\tr_g(e^{-sD_c^-D_c^+}D_c^-  [D^+,\psi_2])ds\]
	\end{Lem}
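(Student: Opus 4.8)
The plan is to start from the explicit formulas in Lemma \ref{lem:expanse s0s1}, extract the difference $\tr_g(S_0(t)) - \tr_g(S_0'(t))$ for each fixed $t>0$, and show that the limit $t\to 0^+$ of this difference equals the claimed time-integral of a delocalized heat supertrace on the cylindrical end. First I would write, using Lemma \ref{lem:expanse s0s1} together with the identities $\varphi_j\psi_j = \psi_j$ and $[D^+,\varphi_j]\psi_j = 0$, that
\[
S_0(t) - S_0'(t) = \varphi_1\big(Q(t) D^+ \psi_1 - \psi_1 D^+ Q(t)\big)\varphi_1 + \varphi_1 Q(t)[D^+,\psi_1] - \psi_1 Q(t)[D^+,\varphi_1] + \varphi_2 Q_c[D^+,\psi_2] - \psi_2 Q_c[D^+,\varphi_2],
\]
and then simplify: the terms living strictly inside $M$ (those involving $\psi_1$, $\varphi_1$, and $Q(t) = \frac{1-e^{-tD^-D^+}}{D^-D^+}D^-$) are $g$-trace class with smooth kernel and, as $t\to 0^+$, their $g$-traces converge to the same local interior quantity and hence cancel in the difference $\tr_g(S_0)-\tr_g(S_0')$. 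This reduces the problem, modulo a routine heat-kernel estimate, to the cylindrical-end terms $\varphi_2 Q_c [D^+,\psi_2]$ and $\psi_2 Q_c[D^+,\varphi_2]$; since $\varphi_2\psi_2=\psi_2$ and $[D^+,\varphi_2]\psi_2=0$, only the $\psi_2$-localized contribution survives, giving $\lim_{t\to 0^+}[\tr_g(S_0) - \tr_g(S_0')] = \lim_{t\to 0^+}\tr_g(Q_c(t)[D^+,\psi_2])$ where now I reinstate the $t$-dependence by replacing the idempotent parametrix $Q_c$ with $Q_c(t) = \frac{1 - e^{-tD_c^-D_c^+}}{D_c^-D_c^+}D_c^-$ in the cylindrical region (legitimate because $D_c^-D_c^+$ is bounded below, so $Q_c(t)\to Q_c$ and the correction is controlled).

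The key analytic step is then to recognize the time-derivative structure: differentiating, $\frac{d}{dt}Q_c(t) = e^{-tD_c^-D_c^+}D_c^-$, so that
\[
\tr_g\big(Q_c(t)[D^+,\psi_2]\big) = \tr_g\Big(\Big(\int_t^\infty \frac{d}{ds}\big(-Q_c(s)\big)\,ds + Q_c(\infty)\Big)[D^+,\psi_2]\Big),
\]
and because $Q_c(\infty) = (D_c^-D_c^+)^{-1}D_c^- $ is the honest parametrix whereas $e^{-sD_c^-D_c^+}D_c^-$ decays exponentially as $s\to\infty$ (here the uniform positivity of the scalar curvature outside $M$ is essential), one gets
\[
\tr_g\big(Q_c(t)[D^+,\psi_2]\big) = -\int_t^\infty \tr_g\big(e^{-sD_c^-D_c^+}D_c^-[D^+,\psi_2]\big)\,ds + \tr_g\big((D_c^-D_c^+)^{-1}D_c^-[D^+,\psi_2]\big).
\]
The second, $t$-independent term must be shown to vanish — this is where I would use that $[D^+,\psi_2]$ is supported in the compact collar $X_{[\frac14,\frac34]}$ where $\psi_1$ transitions, combined with a trace-property argument for $\tr_g$ (as in Lemma \ref{lem:eliminate square}) and the fact that in the cylindrical-end algebra the relevant operator is, up to $g$-trace-class cancellation, a commutator; alternatively one shows directly that $\tr_g$ of this operator is zero because $g\ne e$ forces the relevant kernel integral to vanish by a propagation/support argument far from the diagonal, exactly as in the proof of Lemma \ref{lem commut of tr and tau}. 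Taking $t\to 0^+$ then yields the claimed formula.

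The main obstacle I anticipate is justifying the interchange of the $g$-trace with the time integral $\int_t^\infty(\cdot)ds$ and the convergence of that integral at $s\to\infty$: one needs uniform-in-$s$ $g$-trace-class bounds on $e^{-sD_c^-D_c^+}D_c^-[D^+,\psi_2]$ with exponential decay, which relies on finite-propagation speed of the wave operator, the bounded-geometry hypothesis, the polynomial-growth condition on $\langle g\rangle$, and a Davies–Gaffney-type off-diagonal heat kernel estimate; these are exactly the ingredients imported from \cite{HWW19equivariantAPSindexforproperaction} and the Engel–Samurka\c s algebra, so I would cite those rather than re-prove them. A secondary subtlety is the claimed independence of the cutoff $\psi_2$: since two choices of $\psi_2$ differ by a compactly supported function, the difference of the corresponding integrands is, after the manipulations above, a total $s$-derivative of a $g$-trace-class operator whose boundary values at $s=0$ and $s=\infty$ cancel, which I would record as a short remark after the main computation.
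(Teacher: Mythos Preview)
Your proposal has a genuine gap. After reducing (correctly, via the trace property and the support identities) to
\[
\tr_g(S_0)-\tr_g(S_0')=\tr_g\big(\varphi_1 Q(t)[D^+,\psi_1]\big)+\tr_g\big(\varphi_2 Q_c[D^+,\psi_2]\big),
\]
you replace the \emph{$t$-independent} exact parametrix $Q_c=(D_c^-D_c^+)^{-1}D_c^-$ by the heat-regularized $Q_c(t)$ and then claim that the residual constant term $\tr_g\big(Q_c[D^+,\psi_2]\big)$ vanishes. It does not: by the Laplace representation $Q_c=\int_0^\infty e^{-sD_c^-D_c^+}D_c^-\,ds$ this term equals $\int_0^\infty \tr_g\big(e^{-sD_c^-D_c^+}D_c^-[D^+,\psi_2]\big)\,ds$, which is (up to sign) precisely the delocalized eta invariant at infinity you are trying to produce. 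Neither of your suggested justifications applies: $Q_c[D^+,\psi_2]$ is not a commutator in any useful sense, and $Q_c$ has infinite propagation, so the small-propagation argument behind Lemma~\ref{lem commut of tr and tau} is unavailable. Equivalently: your ``reinstate the $t$-dependence'' step has no content, because the cylindrical term in $S_0,S_0'$ was never $t$-dependent to begin with.

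The idea you are missing is the one the paper actually uses. One introduces the auxiliary cylinder heat parametrix $Q_c'(t)=\frac{1-e^{-tD_c^-D_c^+}}{D_c^-D_c^+}D_c^-$ and compares it to $Q(t)$ by \emph{finite propagation speed}: since $D$ and $D_c$ agree on a neighborhood of $\mathrm{supp}\,[D^+,\psi_1]$, one has $\phi\big(e^{-sD^-D^+}D^- - e^{-sD_c^-D_c^+}D_c^-\big)[D^+,\psi_1]=0$ for $s$ small (with $\phi$ a suitable cutoff), hence $\tr_g\big(\varphi_1 Q(t)[D^+,\psi_1]\big)=\tr_g\big(\varphi_2 Q_c'(t)[D^+,\psi_1]\big)$ for small $t$. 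Using $[D^+,\psi_1]=-[D^+,\psi_2]$ this converts the interior term into $-\tr_g\big(\varphi_2 Q_c'(t)[D^+,\psi_2]\big)$, and the whole expression collapses to
\[
\tr_g\big(\varphi_2(Q_c-Q_c'(t))[D^+,\psi_2]\big)=\tr_g\big(\varphi_2\,e^{-tD_c^-D_c^+}Q_c\,[D^+,\psi_2]\big)=\int_t^\infty \tr_g\big(e^{-sD_c^-D_c^+}D_c^-[D^+,\psi_2]\big)\,ds.
\]
In other words, the interior commutator term is not discarded in the limit --- it is \emph{absorbed} into the cylindrical term via the finite-propagation comparison, and that absorption is exactly what introduces the lower limit $t$ of the $s$-integral. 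Your plan never produces this lower limit in a principled way.
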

	
	\bpr
	By Lemma \ref{lem:expanse s0s1}, the following equality
	\[
	\tr_g (S_0) -\tr_g (S'_0) = \tr_g( \phi_1 Q  [D^+,\psi_1] ) + \tr_g (\phi_2 Q_c  [D^+,\psi_2])
	\]
	holds.
	Construct a new parametrix for $D_c$
	\[
	Q_c':=\frac{1- e^{-tD_c^-D_c^+}}{D^{-}_cD^+_c}D_c^-.
	\]
	Choose $\phi\in C^\infty (X)$ such that $\phi=1$ on the support of $[D^+,\psi_j], j=1,2$ and $\phi=0$ on the support of $1-\phi_j$. This implies that
	\begin{eqnarray}
		&& \tr_g ( \phi_1 Q  [D^+,\psi_1] -  \phi_2 Q_c'   [D^+,\psi_1])\\
		&=& \tr_g ( \phi (Q-Q_c')  [D^+,\psi_1]) +\tr_g  ((1-\phi)( \phi_1 Q -\phi_2 Q_c' )[D^+,\psi_1]) \\
		&=& \tr_g ( \phi (Q-Q_c') [D^+,\psi_1]) .
	\end{eqnarray}

	%Note that $\tr_g( \phi_1 Q(t)   [D^+,\psi_1] ) $ and $\tr_g( \phi_2 Q_c'(t)  [D^+,\psi_1] ) $ do not depend on $t$.
	Hence we have
	\begin{eqnarray}
		&& \tr_g ( \phi (Q-Q_c')  [D^+,\psi_1])\\
		&=& \tr_g ( \phi( Q(t_0)-Q_c'(t_0) ) [D^+,\psi_1])\\
		&=& \tr_g ( \phi \int_0^{t_0}  e^{-s D^-D^+}D^- - e^{-s D_c^- D_c^+}D_c^-  ds [D^+,\psi_1] )\\
		&=&  \tr_g (  \int_0^{t_0}\phi (  e^{-s D^-D^+}D^- - e^{-s D_c^- D_c^+}D_c^- ) [D^+,\psi_1] ds  ).
	\end{eqnarray}
	However, by a finite propagation argument, one can show
	\[\phi(e^{-s D^-D^+}D^- - e^{-s D_c^- D_c^+}D_c^- ) [D^+,\psi_1]=0\]
	when $s$ is sufficiently small. Thus
	\[\tr_g ( \phi_1 Q  [D^+,\psi_1] -  \phi_2 Q_c'  [D^+,\psi_1])=\tr_g ( \phi (Q-Q_c' ) [D^+,\psi_1])=0. \]
	On the other hand, we have
	\begin{eqnarray}
		&&\lim\limits_{t\to 0^+}[\tr_g (\phi_2 Q_c  [D^+,\psi_2] - \phi_2 Q_c'  [D^+,\psi_2])]\\
		&=& \lim\limits_{t\to 0^+}[\tr_g ( \phi_2  (Q_c - \frac{1- e^{-tD_c^-D_c^+}}{D^{-}_cD^+_c}D_c^-)   [D^+,\psi_2]  )]\\
		&=&\lim\limits_{t\to 0^+}[ \tr_g ( \phi_2  (Q_c - \frac{1- e^{-tD_c^-D_c^+}}{D^{-}_cD^+_c}D_c^-D_c^+ Q_c)   [D^+,\psi_2]  )]\\
		&=&\lim\limits_{t\to 0^+}[ \tr_g ( \phi_2   e^{-tD_c^-D_c^+} Q_c   [D^+,\psi_2]  )]\\
		&=& -\lim\limits_{t\to 0^+}[\tr_g (\phi_2\int_t^\infty  e^{-sD_c^-D_c^+} D^-_c dt   [D^+,\psi_2])]\\
		&=& -\lim\limits_{t\to 0^+}\int_t^\infty \tr_g (e^{-sD_c^-D_c^+} D^-_c  [D^+,\psi_2]) dt .
	\end{eqnarray}
	Note that $[D^+,\psi_1]+[D^+,\psi_2]=0$, thus we have
	\begin{eqnarray*}
		&&\lim\limits_{t\to 0^+}[\tr_g (S_0) -\tr_g (S'_0)] \\
		& =& \lim\limits_{t\to 0^+}[\tr_g( \phi_1 Q   [D^+,\psi_1] ) + \tr_g (\phi_2 Q_c [D^+,\psi_2])]\\
		&=&  \lim\limits_{t\to 0^+}[[\tr_g( \phi_1 Q   [D^+,\psi_1] )-\tr_g (\phi_2 Q_c'  [D^+,\psi_1])] \\
		&& +\lim\limits_{t\to 0^+}[\tr_g (\phi_2 Q_c   [D^+,\psi_2]) - \tr_g (\phi_2 Q_c'  [D^+,\psi_2]) ]\\
		&=& -\lim_{t\to 0^+}\int_t^\infty \tr_g (e^{-tD_c^-D_c^+} D^-_c  [D^+,\psi_2]) dt.
	\end{eqnarray*}
	The lemma is then proved.
	\epr
	
   The following lemma is a direct result of Proposition 4.1 and Proposition 5.1 of \cite{HWWanequiapsindthemforpropacII}.
	\begin{Lem}\label{lem: basic index theo lemma}
		By the isomorphism between $C^*_r(G)\otimes \mathcal{K}$ and $C^*(X, M)^G$, and basic index theory, we have
		\[
		\tr_{g} (\ind_G D) = \tr_g (S_0^2) -\tr_g (S^2_1).
		\]
	\end{Lem}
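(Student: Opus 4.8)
The plan is to read the identity directly off the explicit formula for $\ind_G(D)$ established in Proposition~\ref{prop:formula for higher index}, combined with the description of the delocalized trace $\tr_g$ on $K_0(C^*_r(G))$ as ``matrix trace followed by the extended $g$-trace'' applied to idempotent representatives lying over the smooth dense subalgebra of $C^*(X,M)^G\cong C^*_r(G)\otimes\mathcal K$ (the Connes--Moscovici algebra $\mathcal B(M)^G$, on which $\tr_g$ is the cyclic trace recalled in Section~\ref{sec: preliminary}; equivalently one may use the Engel--Samurka\c s algebra). The index-theoretic input is exactly \cite[Propositions~4.1 and~5.1]{HWWanequiapsindthemforpropacII}, and the algebraic input is the material of \cite{XYdelocalizedetainvalgebraicityandKtheoryofgroupCalgebras}.

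First I would record, using the preparatory discussion preceding Proposition~\ref{prop:formula for higher index} (this is where bounded geometry of $X$, the uniform positive lower bound on the scalar curvature outside $M$, and the polynomial growth of $\langle g\rangle$ enter), that $S_0$, $S_1$, $S_0'$ and their squares are $g$-trace class operators with smooth Schwartz kernels lying in $C^*(X,M)^G$, and that on such operators $\tr_g$ is the convergent $g$-integral of the kernel along the diagonal. Consequently the two idempotents
\[
p=\begin{bmatrix}S_0^2 & S_0(S_0+1)R \\ S_1 D^+ & 1-S_1^2\end{bmatrix},\qquad
q=\begin{bmatrix}0 & 0 \\ 0 & 1\end{bmatrix}
\]
of Proposition~\ref{prop:formula for higher index} are idempotents over the unitization of a matrix algebra over the smooth dense subalgebra, they have the same image $q$ in $M_2(\C)$, and $\ind_G(D)=[p]-[q]$.

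Second I would compute the pairing. Since $\tr_g$ is a cyclic trace, the value of $\tr_g$ on $[p]-[q]$ equals $(\Tr\otimes\widetilde{\tr_g})(p)-(\Tr\otimes\widetilde{\tr_g})(q)$, where $\Tr$ is the $M_2$ matrix trace and $\widetilde{\tr_g}$ is the canonical extension of $\tr_g$ to the unitization; this is independent of the chosen representatives precisely because the scalar parts of $p$ and $q$ coincide. Only the diagonal entries contribute, so the value is $(\tr_g(S_0^2)+\widetilde{\tr_g}(1-S_1^2))-\widetilde{\tr_g}(1)=\tr_g(S_0^2)-\tr_g(S_1^2)$, the contributions of the adjoined unit cancelling. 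This is exactly the assertion of the lemma.

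The only non-formal point --- and hence the main obstacle --- is the bookkeeping behind the first step: one must check that $p$ is a genuine idempotent over the unitized smooth subalgebra despite the presence of the unbounded $D^+$ in the entry $S_1D^+$ (note $S_1D^+=D^+S_0$, which is smoothing because $S_0$ maps into the smooth domains of all powers of $D$) and of the parametrix $R$ in the entry $S_0(S_0+1)R$; that $S_0^2$ and $S_1^2$ genuinely lie in $C^*(X,M)^G$ and are $g$-trace class with smooth kernel; and that $\widetilde{\tr_g}$ is a well-defined cyclic trace on the unitization, so that the formal difference $[p]-[q]$ pairs as claimed and the value is representative-independent. All of these are furnished by the lemmas cited above, so once they are in place the lemma is immediate.
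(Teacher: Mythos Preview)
Your proposal is correct and is exactly the approach the paper takes: the paper gives no argument beyond citing \cite[Propositions~4.1 and~5.1]{HWWanequiapsindthemforpropacII}, and your write-up simply unpacks what that citation entails---reading off the diagonal entries of the idempotent from Proposition~\ref{prop:formula for higher index} and pairing with $\tr_g$ on the smooth subalgebra. The bookkeeping obstacles you flag (smoothing of $S_1D^+$, $g$-trace class of $S_0^2$, $S_1^2$, well-definedness on the unitization) are precisely what those cited propositions supply.
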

	
	\begin{proof}[Proof of Theorem \ref{thm:formula for delocalized eta} ]
		Combine Lemma \ref{lem:eliminate square}, \ref{lem:contribution of inner}, \ref{lem:contribution of boundary} and \ref{lem: basic index theo lemma}, we obtain
		\begin{eqnarray}
			\tr_{g} (\ind_G (D)) &=& \tr_g (S_0^2) -\tr_g (S^2_1)\\
			&=& \tr_g (S_0) -\tr_g (S_1)\\
			&=& \tr_g (S_0) -\tr_g(S_0') + \tr_g(S_0')-\tr_g(S_1)\\
			&=& \int_{M^g}I(g) - \lim_{t\to 0}\int_t^\infty \tr_g (e^{-sD_c^-D_c^+} D^-_c  [D^+,\psi_2]) dt.
		\end{eqnarray}
		The proof is completed.
	\end{proof}

%	that the metric on $X$ is of bounded geometry, and

		\section{M\"uller's Atiyah-Patodi-Singer type theorem}\label{sec manifold with corner}
	
		In this section, we apply the technique from Subsection \ref{subsec:formulafordelocalizedeta} to develop a Atiyah-Patodi-Singer type theorem on manifold with corner of codimension 2. Our computation can be easily generalized to the case of manifolds with corners of codimensions $k\geq 2$. This subsection can be viewed as an $L^2$ generalization of Muller's extraordinary pioneer work in \cite{Muller94manifoldwithboundary} and \cite{Muller96manifoldwithcorner} in the particular case of manifolds with metric with uniform positive scalar curvature at  infinity.

Let us first fix some notions.  Let $M$ be an even dimensional manifold with corner with $\partial M= \partial_1 M \cup \partial_2 M$, where $\partial_1 M$ and $\partial_2 M$ are manifolds with common boundary, i.e. $\partial \partial_1 M= \partial \partial_2 M = Y$. The metric of $M$ is collared near $\partial M$ and have positive scalar curvature on $\partial_1 M$ and $\partial_2 M$. Furthermore, the metric on $\partial_1 M$ and $\partial_2 M$ is collared near $Y$. Let $G$ be a discrete group acting on $M$ properly, freely and cocompactly by isometries.
	
	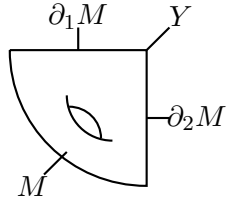
\begin{figure}[h]
		\centering
		\begin{tikzpicture}[scale=0.3]
		\draw[thick](0,3)--(6,3)--(6,-3)to[out=180,in=270](0,3);
		%\draw[thick](3.5,0.5)to[out=315,in=225](7,0.5);
		%\draw[thick](3,0)to[out=50,in=210](3.5,0.5)to[out=30,in=150](7,0.5)to[out=330,in=130](7.5,0);
		\node at (3,4.5){$\partial_1 M$};
		\draw[thick](3,4)--(3,3);
		\node at (8.2,0) {$\partial_2 M$};
		\draw[thick](7,0)--(6,0);
		\node at (7.5,4.5) {$Y$};
		\draw[thick](7,4)--(6,3);
		\node at (1,-3) {$M$};
		\draw[thick](1.5,-2.5)--(2.5,-1.5);
		\draw[thick](2.5,1)to[out=270,in=180](4.5,-1);
		\draw[thick](2.6,0.5)to[out=0,in=90](4,-0.9);
		\end{tikzpicture}
		\caption{Manifold with corner of codimension 2.}
		\label{fig:manifold with corner}
	\end{figure}
	
	Let $Z_i, i=1,2$ be $\partial_i M\cup (Y\times \mathbb{R_+})$ and $Z_{i, \leq T}, i=1,2$ be $\partial_i M\cup (Y\times [0,T])$. Denote $C_i, i=1,2$ as $ \partial_i M \times \mathbb{R}_+, i=1,2$ and $C_{i, \leq T}, i=1,2 $ as $ \partial_i M \times [0,T], i=1,2$. Let $C_0$ be $Y\times \mathbb{R}_+^2$ and $C_{0, \leq T}$ be $Y\times [0,T]\times [0,T]$. Let $M_{i}, i=1,2$ be $M \cup C_i, i=1,2$ and $M_{i, \leq T}, i=1,2$ be $M \cup C_{i, \leq T}, i=1,2$.
	
	\begin{figure}[h]
		\centering
		\begin{tikzpicture}[scale=0.3]
		\draw[thick](0,3)--(6,3)--(6,-3)to[out=180,in=270](0,3);
		\node at (0.8,-3.2) {$M$};
		\draw[thick](1.5,-2.5)--(2.5,-1.5);
		\draw[thick](2.5,1)to[out=270,in=180](4.5,-1);
		\draw[thick](2.6,0.5)to[out=0,in=90](4,-0.9);
		\draw[thick](0,3)--(0,13);
		\draw[thick](6,3)--(6,13);
		\draw[thick](6,-3)--(16,-3);
		\draw[thick](6,3)--(16,3);
		%\node at (10,-4.5) {$C_2=\partial_2 M\times \mathbb{R}_+$};
		%\draw[thick](10,-3.7)--(10,-1);
		\node at (14,-1) {$C_2$($=\partial_2 M\times \mathbb{R}_+$)};
		\node at (-5,8) {$C_1$($=\partial_1 M \times \mathbb{R}_+$)};
		\draw[thick](-0.5,8)--(1,8);
		%\node at (3,8){$C_1$};
		%\node at (3,5){($=\partial_1 M \times \mathbb{R}_+$)};
		\node at (13,10) {$C_0$($=Y \times \mathbb{R}_+^2$)};
		\end{tikzpicture}
		\caption{Complete manifold $X$.}
		\label{fig:complete corner}
	\end{figure}
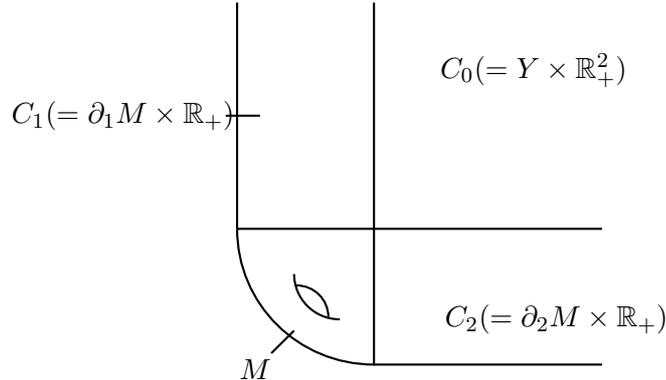
	
	Set a complete manifold
	\begin{equation}\label{eq: def of X}
		X=M_1 \cup ( Z_{2}\times \mathbb{R_+})= M_{2}\cup (Z_{1}\times \mathbb{R_+})
	\end{equation}
	and
	\begin{equation}\label{eq: def of X T}
		X_{\leq T}= M_{1,\leq T}\cup Z_{1,\leq T}\times [0,T]= M_{2,\leq T}\cup Z_{2,\leq T}\times [0,T].
	\end{equation}
	
	We assume in addition  $G$ is a finitely generated discrete group acting on $M$, properly, cocompactly, and freely by isometries.

	Let $\sigma$ be the grading operator on the spinor bundle of $X$.
	
	Recall that $Z_1\times \mathbb{R_+}= C_1\cup C_0$ and $Z_2\times \mathbb{R_+}= C_2\cup C_0$ (see Figures \ref{fig:complete corner} and \ref{fig:complete corner 2}).
	
	\begin{figure}[h]
		\centering
		\begin{tikzpicture}[scale=0.3]
		\draw[thick](0,3)--(6,3)--(6,-3)to[out=180,in=270](0,3);
		\node at (0.8,-3.2) {$M$};
		\draw[thick](1.5,-2.5)--(2.5,-1.5);
		\draw[thick](2.5,1)to[out=270,in=180](4.5,-1);
		\draw[thick](2.6,0.5)to[out=0,in=90](4,-0.9);
		\draw[thick](0,3)--(0,13);
		\draw[thick](6,3)--(6,13);
		\draw[thick](6,-3)--(16,-3);
		\draw[thick](6,3)--(16,3);
		%\node at (10,-4.5) {$C_2=\partial_2 M\times \mathbb{R}_+$};
		%\draw[thick](10,-3.7)--(10,-1);
		\node at (17,-6) {$Z_2\times \mathbb{R}_+$};
		\node at (-5,8) {$Z_1\times \mathbb{R_+}$};
		\draw[dashed,thick](-2,8)--(2,7);
		\draw[dashed,thick](-2,8)--(8,8);
		\draw[dashed,thick](17,-5)--(14,-1);
		\draw[dashed,thick](17,-5)--(14,5);
		%\node at (3,8){$C_1$};
		%\node at (3,5){($=\partial_1 M \times \mathbb{R}_+$)};
		\node at (13,10) {$C_0$($=Y \times \mathbb{R}_+^2$)};
		\end{tikzpicture}
		\caption{$Z_1\times \mathbb{R_+}$ and $Z_2\times \mathbb{R_+}$.}
		\label{fig:complete corner 2}
	\end{figure}
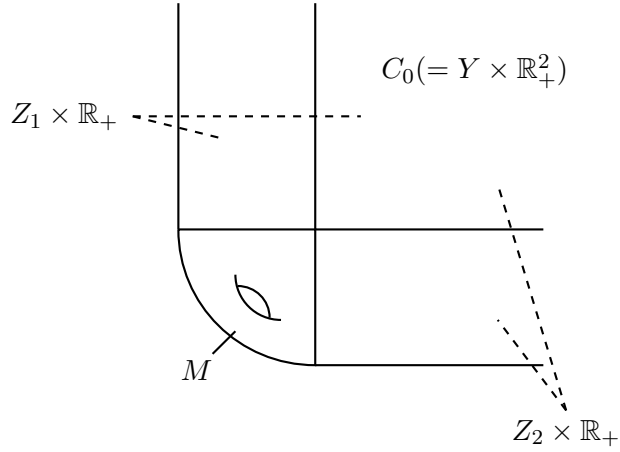
	
	Let $\psi_1:\mathbb{R}_+\to [0,1]$ be a smooth function  such that $\psi_1 \equiv 1 $ on $(0, \frac{1}{4})$ and $\psi_1 \equiv 0 $ on $( \frac{3}{4}, \infty) $. Set $\psi_2= 1- \psi_1$. Let $\varphi_1,\ \varphi_2: \mathbb{R}_+\to [0,1]$ be smooth functions such that $\varphi_1 \equiv 1$ on $(0, \frac{7}{8})$ and $\varphi_1 \equiv 0 $ on $(1,\infty)$, while $\varphi_2 \equiv 0$ on $(0, \frac{1}{16})$ and $\varphi_2 \equiv 1$ on $(\frac{1}{8}, \infty)$. Note that $\varphi_j \psi_j =\psi_j$ and $\varphi_j'\psi_j=0$, $j=1,2$. Denote $\psi_{i,j}$ by $\psi_j\otimes \chi_{Z_i}$, and $\varphi_{i,j}$ by $\varphi_j\otimes \chi_{Z_i}$ for $i,j=1,2$, where $\chi_{Z_i}$ is the characteristic function of $Z_i$. Note that the range of $\varphi_{i,j}$ is contained in $[0,1]$.
	
	Let $D_{X}$ be the Dirac operator on $X$, $D_{Z_i\times \mathbb{R}_+}$ and $D_0$ be the Dirac operator on $Z_i\times \mathbb{R}_+$ and $C_0$ respectively. Let $Q_{i}$ be defined using $(D_{Z_i\times \mathbb{R}_+}^-D_{Z_i\times \mathbb{R}_+}^+)^{-1}D_{Z_i\times \mathbb{R}_+}^-$, the inverse of $D_{Z_i\times \mathbb{R}_+}^+$ , and $Q_0$ be defined using the inverse of $D_0^+$. The existence of $Q_0,\ Q_1,\ Q_2$ are ensured by the uniform positive scalar curvature on all boundary pieces of $M$. For any $t\in [0, \infty)$, set $Q(t)$ as
	\[
	\frac{1-e^{-tD_{X}^-D_X^+}}{D_{X}^-D_X^+}D_{X}^-,
	\]
	$Q'_{i}(t)$ as
	\[
	\frac{1-e^{-tD_{Z_i\times \mathbb{R}_+}^-D_{Z_i\times \mathbb{R}_+}^+}}{D_{Z_i\times \mathbb{R}_+}^-D_{Z_i\times \mathbb{R}_+}^+}D_{Z_i\times \mathbb{R}_+}^-,
	\]
	and $Q'_0(t)$ as
	\[
	\frac{1-e^{-tD_{0}^-D_0^+}}{D_{0}^-D_0^+}D_{0}^-.
	\]
	
	Similarly as in Section \ref{sec:formula for higher index}, we construct two paramatices as following
	\begin{eqnarray*}
		R(t) &=& \varphi_{1,1}\varphi_{2,1} Q(t) \psi_{1,1}\psi_{2,1}+ \varphi_{1,2}Q_1(t) \psi_{1,2}+ \varphi_{2,2}Q_2(t)\psi_{2,2}-\varphi_{1,2}\varphi_{2,2}Q_0(t)\psi_{1,2}\psi_{2,2},\\
		R'(t)&=&  \psi_{1,1}\psi_{2,1} Q(t)  \varphi_{1,1}\varphi_{2,1}+ \psi_{1,2} Q_1(t)  \varphi_{1,2}+ \psi_{2,2}Q_2(t)\varphi_{2,2}-\psi_{1,2}\psi_{2,2}Q_0(t)\varphi_{1,2}\varphi_{2,2}.
	\end{eqnarray*}
	Write
	\begin{eqnarray*}
		S_0(t)&=&1-R(t)D^+_X ,\\
		S_0'(t)&=&1-R'(t)D^+_X,\\
		S_1(t)&=&1-D^+_XR(t).
	\end{eqnarray*}
	From
	\[
	\psi_{1,1}+\psi_{1,2}=1 \text{ and }\psi_{2,1}+\psi_{2,2}=1,
	\]
	we obtain
	\[
	\psi_{1,1}\psi_{2,1}+\psi_{1,2}+\psi_{2,2}-\psi_{1,2}\psi_{2,2}=1.
	\]
	Replacing $1$ in $S_0,\ S_0',\ S_1$ by
	$
	\psi_{1,1}\psi_{2,1}+\psi_{1,2}+\psi_{2,2}-\psi_{1,2}\psi_{2,2},
	$ and direct computation shows that
	\begin{eqnarray*}
		S_0(t)&=& \varphi_{1,1}\varphi_{2,1} e^{-tD_{X}^-D_X^+} \psi_{1,1}\psi_{2,1} + \varphi_{1,1}\varphi_{2,1} Q(t)   [D^+_X,\psi_{1,1}\psi_{2,1}] \\ && + \varphi_{1,2}Q_1(t)   [D^+_X,\psi_{1,2}] + \varphi_{2,2}Q_2(t)   [D^+_X,\psi_{2,2}]  -\varphi_{1,2}\varphi_{2,2}Q_0(t)  [D^+_X,\psi_{1,2}\psi_{2,2}]  ,     \\
		S_0'(t)&=& \psi_{1,1}\psi_{2,1} e^{-tD_{X}^-D_X^+}  \varphi_{1,1}\varphi_{2,1}  +  \psi_{1,1}\psi_{2,1}Q(t)   [D^+_X,\varphi_{1,1}\varphi_{2,1}]  \\&& + \psi_{1,2}Q_1(t)   [D^+_X,\varphi_{1,2}] +\psi_{2,2} Q_2(t)   [D^+_X,\varphi_{2,2}] - \psi_{1,2}\psi_{2,2}Q_0(t)    [D^+_X,\varphi_{1,2}\varphi_{2,2}]   ,             \\
		S_1(t)&=& \varphi_{1,1}\varphi_{2,1} e^{-tD_{X}^-D_X^+} \psi_{1,1}\psi_{2,1} +[D^+_X,\varphi_{1,1}\varphi_{2,1}] Q(t)   \psi_{1,1}\psi_{2,1}\\ && + [D^+_X,\varphi_{1,2}]Q_1(t)    \psi_{1,2} + [D^+_X,\varphi_{2,2}]Q_2(t)   \psi_{2,2} -[D^+_X,\varphi_{1,2}\phi_{2,2})]Q_0(t)    \psi_{1,2}\psi_{2,2}.
	\end{eqnarray*}
	
	Note that $S_0,\ S_0',\  S_1$ and their squares are $e$-trace class operators with smooth Schwartz Kernels. Then taking the trace of the $K$-theoretic index, we obtain
	\begin{equation}
		\tr_e(\ind_G(D_M))=\tr_e (S_0^2(t))-\tr_e (S_1^2(t)).
	\end{equation}
	
	Exactly the same argument of Lemma \ref{lem:eliminate square} shows that
	\begin{equation}
		\tr_e (S_0^2(t))-\tr_e (S_1^2(t))=\tr_e (S_0(t))-\tr_e (S_1(t)).
	\end{equation}
	
	For the same reason as in the proof of Lemma \ref{lem:contribution of inner}, we have
	\[
	\lim\limits_{t\to 0^+}(\tr_e (S'_0(t))-\tr_e (S_1(t)) )= \int_{M} \hat A(M).
	\]

	Similarly as in the proof of Lemma \ref{lem:contribution of boundary}, one can also obtain
	\begin{eqnarray*}
		& & \lim\limits_{t\to 0^+} \tr_e (S_0(t)-S_0'(t))\\
		%&=&
		%&=& \lim\limits_{t\to 0} \tr_e ((Q_1(t)-Q(t))\psi_{1,2}')+ \lim_{t\to 0}\tr_e  ((Q_2(t)-Q(t))\psi_{2,2}')\\&& -\lim_{t\to 0} \tr_e (\varphi_{1,2}\varphi_{2,2}(Q_0(t)-Q(t))(\psi_{1,2}\psi_{2,2})')\\
		&=& \lim\limits_{t\to 0} \tr_e ((Q_1(t)-Q_1'(t))\sigma \psi_{1,2}')+ \lim_{t\to 0}\tr_e  ((Q_2(t)-Q_2'(t))\sigma \psi_{2,2}')\\
		&& -\lim_{t\to 0} \tr_e (\varphi_{1,2}\varphi_{2,2}(Q_0(t)-Q_0'(t))[D^+_X, \psi_{1,2}\psi_{2,2}] )\\
		&=& \lim_{t\to 0} \tr_e ((Q_1(t)-Q_1'(t))\sigma \psi_{1,2}')+ \lim_{t\to 0} \tr_e  ((Q_2(t)-Q_2'(t))\sigma \psi_{2,2}')\\ && -\lim_{t\to0}\tr_e ((Q_0(t)-Q_0'(t))\sigma \psi_{1,2}'\psi_{2,2})-\lim_{t\to 0}\tr_e ((Q_0(t)-Q_0'(t))\sigma \psi_{1,2}\psi_{2,2}').
	\end{eqnarray*}
	Let $\chi_{\psi_{i,2}=1}, i=1,2$ be the characteristic function on the set $\{x\in X| \psi_{i,2}(x)=1\}$, and $\chi_{Z_i\times \mathbb{R}_+}, i=1,2$ be the characteristic function on $Z_i\times \mathbb{R}_+$. Then we have
	\begin{eqnarray*}
		& &\lim\limits_{t\to 0^+} \tr_e (S_0(t)-S_0'(t))\\
		&=& \lim_{t\to 0} \tr_e ((Q_1(t)-Q_1'(t))\sigma \psi_{1,2}' - (Q_0(t)-Q_0'(t))\sigma \psi_{1,2}'\chi_{\psi_{2,2}=1})\\&&+ \lim_{t\to 0}\tr_e  ((Q_2(t)-Q_2'(t))\sigma \psi_{2,2}'-(Q_0(t)-Q_0'(t))\sigma \psi_{2,2}'\chi_{\psi_{1,2}=1})\\&&-\lim_{t\to 0} \tr_e ((Q_0(t)-Q_0'(t))\sigma (\psi_{1,2}'\psi_{2,2}(1-\chi_{\psi_{2,2}=1})
		+\psi_{1,2}\psi_{2,2}'(1-\chi_{\psi_{1,2}=1})))\\
		&=& \lim_{t\to 0} \tr_e ((Q_1(t)-Q_1'(t))\sigma \psi_{1,2}' - (Q_0(t)-Q_0'(t))\sigma \psi_{1,2}'\chi_{Z_2\times \mathbb{R}_+})\\&&+ \lim_{t\to 0}\tr_e  ((Q_2(t)-Q_2'(t))\sigma \psi_{2,2}'-(Q_0(t)-Q_0'(t))\sigma \psi_{2,2}'\chi_{Z_1\times \mathbb{R}_+})\\
		&&-\lim_{t\to 0}\tr_e (\sigma (\psi_{1,2}'\psi_{2,2}(1-\chi_{\psi_{2,2}=1})+\psi_{1,2}\psi_{2,2}'(1-\chi_{\psi_{1,2}=1}))(Q_0(t)-Q_0'(t))\\
		&&-\lim_{t\to 0}\tr_e (\sigma(\psi_{1,2}'(\chi_{Z_2\times \mathbb{R}_+}-\chi_{\psi_{2,2}=1}) + \psi_{2,2}'(\chi_{Z_1\times \mathbb{R}_+}- \chi_{\psi_{1,2}=1}))(Q_0(t)-Q_0'(t)).
	\end{eqnarray*}
	
	\begin{Lem}\label{lem:first term}
		We have
		\begin{enumerate}
			\item \begin{eqnarray*}
				&& [\tr_e ( (Q_1(t)-Q_1'(t))\sigma \psi_{1,2}' - (Q_0(t)-Q_0'(t))\sigma \psi_{1,2}'\chi_{Z_2\times \mathbb{R}_+} )] \\
				&=& -\frac{1}{\sqrt{4\pi }} \int_t^\infty \frac{1}{\sqrt{s}} \tr_e ( e^{-sD_{Z_1}^2} D_{Z_1} - e^{-sD_{Y\times \mathbb{R}_+}^2} D_{Y\times \mathbb{R}_+})ds
			\end{eqnarray*}
			\item
			\begin{eqnarray*}
				&& [\tr_e ((Q_2(t)-Q_2'(t))\sigma \psi_{2,2}' - (Q_0(t)-Q_0'(t))\sigma \psi_{2,2}'\chi_{Z_1\times \mathbb{R}_+})] \\
				&=& -\frac{1}{\sqrt{4\pi }} \int_t^\infty \frac{1}{\sqrt{s}} \tr_e ( e^{-sD_{Z_2}^2} D_{Z_2} - e^{-sD_{Y\times \mathbb{R}_+}^2} D_{Y\times \mathbb{R}_+})ds
			\end{eqnarray*}
		\end{enumerate}
	\end{Lem}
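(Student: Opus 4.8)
The plan is to reduce both parts of the lemma to a single model computation on a product half-cylinder and then to recognise the outcome as a relative heat trace of the type studied by M\"uller. Parts (1) and (2) are mirror images of each other under interchanging $\partial_1 M$ with $\partial_2 M$ (equivalently $Z_1$ with $Z_2$, together with the two $\mathbb{R}_+$-factors of the corner $C_0=Y\times\mathbb{R}_+^2$), so I would prove (1) in detail and remark that (2) follows verbatim.

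First I would turn the parametrix differences into heat integrals. Since $D_{Z_1\times\mathbb{R}_+}^-D_{Z_1\times\mathbb{R}_+}^+$ is bounded below by $h_0/4>0$ — the positive scalar curvature of $\partial_1 M$ and its collar propagates to all of $Z_1$, so Lichnerowicz gives the gap — one has
\[
Q_1(t)-Q_1'(t)=\frac{e^{-tD_{Z_1\times\mathbb{R}_+}^-D_{Z_1\times\mathbb{R}_+}^+}}{D_{Z_1\times\mathbb{R}_+}^-D_{Z_1\times\mathbb{R}_+}^+}\,D_{Z_1\times\mathbb{R}_+}^-=\int_t^\infty e^{-sD_{Z_1\times\mathbb{R}_+}^-D_{Z_1\times\mathbb{R}_+}^+}D_{Z_1\times\mathbb{R}_+}^-\,ds ,
\]
and similarly $Q_0(t)-Q_0'(t)=\int_t^\infty e^{-sD_0^-D_0^+}D_0^-\,ds$ on $C_0$. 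The Gaussian off-diagonal bounds and spectral-gap estimates already invoked in Section \ref{subsec:formulafordelocalizedeta} let the $e$-trace be interchanged with the $s$-integral, so it suffices to prove, for each $s>0$,
\[
\tr_e\big(e^{-sD_{Z_1\times\mathbb{R}_+}^-D_{Z_1\times\mathbb{R}_+}^+}D_{Z_1\times\mathbb{R}_+}^-\,\sigma\psi_{1,2}'-e^{-sD_0^-D_0^+}D_0^-\,\sigma\psi_{1,2}'\chi_{Z_2\times\mathbb{R}_+}\big)=-\frac{1}{\sqrt{4\pi s}}\,\tr_e\big(e^{-sD_{Z_1}^2}D_{Z_1}-e^{-sD_{Y\times\mathbb{R}_+}^2}D_{Y\times\mathbb{R}_+}\big),
\]
the right-hand side being a convergent relative trace.

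The core is the model computation. On $Z_1\times\mathbb{R}_+$ the metric is the Riemannian product $g_{Z_1}+du^2$, so the Dirac operator there has the form $c(du)(\partial_u+D_{Z_1})$ — with $c(du)$ Clifford multiplication by $du$, which also carries the $\Z_2$-grading — whence $D_{Z_1\times\mathbb{R}_+}^-D_{Z_1\times\mathbb{R}_+}^+=-\partial_u^2+D_{Z_1}^2$. Consequently, on the region $\{u\in(\tfrac14,\tfrac34)\}$ where $\psi_{1,2}'=\psi_2'\otimes\chi_{Z_1}$ is supported and the geometry is a product, the Schwartz kernel of $e^{-sD_{Z_1\times\mathbb{R}_+}^-D_{Z_1\times\mathbb{R}_+}^+}D_{Z_1\times\mathbb{R}_+}^-$ factors as the one-dimensional heat kernel $p_s^{\mathbb{R}}(u,u')=(4\pi s)^{-1/2}e^{-(u-u')^2/(4s)}$ times the kernel of $e^{-sD_{Z_1}^2}D_{Z_1}$, up to a fixed Clifford factor (the $\partial_u$-part of the operator contributes $\partial_u p_s^{\mathbb{R}}$, odd in $u-u'$, which drops out on the diagonal; finite propagation controls the small-$s$ error off the product region, and the large-$s$ tail is exponentially suppressed by $D^2\ge h_0/4$). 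Taking the $e$-trace — integrating the diagonal against $\psi_2'(u)$ in $u$, then tracing over $Z_1$ — and using $p_s^{\mathbb{R}}(u,u)=(4\pi s)^{-1/2}$ together with $\int_0^\infty\psi_2'(u)\,du=\psi_2(\infty)-\psi_2(0)=1$, a short Clifford computation fixes the constant and yields
\[
\tr_e\big(e^{-sD_{Z_1\times\mathbb{R}_+}^-D_{Z_1\times\mathbb{R}_+}^+}D_{Z_1\times\mathbb{R}_+}^-\,\sigma\psi_{1,2}'\big)=-\frac{1}{\sqrt{4\pi s}}\,\tr_e\big(e^{-sD_{Z_1}^2}D_{Z_1}\big).
\]
The identical computation on $C_0=Y\times\mathbb{R}_+^2$, where $\chi_{Z_2\times\mathbb{R}_+}\equiv1$ and $D_0=c(du)(\partial_u+D_{Y\times\mathbb{R}_+})$ with $Y\times\mathbb{R}_+$ the cylindrical end of $Z_1$, produces $-(4\pi s)^{-1/2}\tr_e(e^{-sD_{Y\times\mathbb{R}_+}^2}D_{Y\times\mathbb{R}_+})$; subtracting gives the displayed per-$s$ identity. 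The subtraction is exactly what renders both sides finite: $\tr_e(e^{-sD_{Z_1}^2}D_{Z_1})$ and $\tr_e(e^{-sD_{Y\times\mathbb{R}_+}^2}D_{Y\times\mathbb{R}_+})$ each diverge (the diagonal heat density picks up a term growing linearly along the end), but $D_{Z_1}$ coincides with $D_{Y\times\mathbb{R}_+}$ on a neighbourhood of the end of $Z_1$, so by finite propagation their heat kernels agree there up to $O(e^{-c/s})$ for small $s$ and $O(e^{-cs})$ for large $s$; hence the difference is trace class and the right-hand side is precisely the relative heat trace of \cite{Muller94manifoldwithboundary,Muller96manifoldwithcorner}. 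Integrating in $s$ over $[t,\infty)$ gives part (1), and part (2) follows by exchanging the indices $1$ and $2$.

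The main obstacle will be making the reduction to the product model uniform enough in $s$ to survive the integral $\int_t^\infty$: one must control the difference between the genuine heat kernel on $Z_1\times\mathbb{R}_+$ (a piece of the global manifold $X$, carrying a boundary and a corner) and the product kernel $p_s^{\mathbb{R}}\otimes e^{-sD_{Z_1}^2}$ over the support of $\psi_{1,2}'$, and simultaneously establish the finiteness of the relative heat traces and the legitimacy of exchanging trace and $s$-integral. All of this rests on Duhamel and finite-propagation estimates of the kind already used in Section \ref{subsec:formulafordelocalizedeta} and in \cite{HWWanequiapsindthemforpropacII,Muller94manifoldwithboundary,Muller96manifoldwithcorner}, but the bookkeeping — arranging that the $u$-integral, the $s$-integral and the trace over $Z_1$ all converge at once with constants that close up — is where the care is needed.
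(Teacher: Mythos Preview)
Your proposal is correct and follows essentially the same route as the paper: convert $Q_i-Q_i'$ to a heat integral $\int_t^\infty e^{-sD^-D^+}D^-\,ds$, exploit the product structure $D^-D^+=-\partial_u^2+D_{Z_1}^2$ so that the kernel factors as the one-dimensional Gaussian times the transverse heat kernel, observe that the $\partial_u$-contribution vanishes on the diagonal, and evaluate $\int\psi_2'(u)\,p_s^{\mathbb{R}}(u,u)\,du=(4\pi s)^{-1/2}$. The only presentational difference is that the paper keeps the subtraction $e^{-sD_{Z_1}^2}D_{Z_1}-e^{-sD_{Y\times\mathbb{R}_+}^2}D_{Y\times\mathbb{R}_+}$ together \emph{before} factoring off the one-dimensional trace, whereas you compute each summand formally and then remark that only the difference is finite; the paper's order avoids ever writing down a divergent trace, but the content is the same.
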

	\bpr It is sufficient to prove the first item. By the proof of Lemma \ref{lem:contribution of boundary}, we have
	\begin{eqnarray*}
		&& [\tr_e ((Q_1(t)-Q_1'(t))\sigma \psi_{1,2}' - (Q_0(t)-Q_0'(t))\sigma \psi_{1,2}'\chi_{Z_2\times \mathbb{R}_+})] \\
		&=& - \int_t^\infty  tr  (e^{-sD_{Z_1\times \mathbb{R}_+}^-D_{Z_1\times \mathbb{R}_+}^+}D_{Z_1\times \mathbb{R}_+}^--e^{-sD_0^-D_0^+}D_0^- \chi_{Z_2\times \mathbb{R}_+} )\sigma \psi_{1,2}'   ds.
	\end{eqnarray*}
	Note that
	\[
	D_{Z_1\times \mathbb{R}_+}=\begin{bmatrix}
	0 & D_{Z_1\times \mathbb{R}_+}^-\\
	D_{Z_1\times \mathbb{R}_+}^+ &0
	\end{bmatrix}= \begin{bmatrix}
	0 & \frac{\partial}{\partial x}+D_{Z_1} \\
	-\frac{\partial}{\partial x}+D_{Z_1} & 0
	\end{bmatrix},
	\]
	and
	\[
	D_{0}=\begin{bmatrix}
	0 & D_{0}^-\\
	D_{0}^+ &0
	\end{bmatrix}= \begin{bmatrix}
	0 & \frac{\partial}{\partial x}+D_{Y\times \mathbb{R_+}} \\
	-\frac{\partial}{\partial x}+D_{Y\times \mathbb{R_+}} & 0
	\end{bmatrix}.
	\]
	Hence
	\begin{eqnarray*}
		&& \int_t^\infty  \tr_e  (e^{-sD_{Z_1\times \mathbb{R}_+}^-D_{Z_1\times \mathbb{R}_+}^+}D_{Z_1\times \mathbb{R}_+}^--e^{-sD_0^-D_0^+}D_0^- \chi_{Z_2\times \mathbb{R}_+} )\sigma \psi_{1,2}'   ds \\
		&=& \int_t^\infty  \tr_e  (e^{s\frac{\partial^2}{\partial^2 x}}(e^{-sD_{Z_1}^2}D_{Z_1}-e^{-sD_{Y\times \mathbb{R}_+}^2}D_{Y\times \mathbb{R}_+} ) \psi_{1,2}'   ds \\
		&&-\int_t^\infty  \tr_e  (e^{s\frac{\partial^2}{\partial^2 x}}\frac{\partial }{\partial x}(e^{-sD_{Z_1}^2}-e^{-sD_{Y\times \mathbb{R}_+}^2}) \psi_{1,2}'   ds \\
		&=& \int_t^\infty  \tr_e (e^{s\frac{\partial^2}{\partial^2 x}}  \psi_{2}') \tr_e (e^{-sD_{Z_1}^2}D_{Z_1}-e^{-sD_{Y\times \mathbb{R}_+}^2}D_{Y\times \mathbb{R}_+} )  ds \\
		&&- \int_t^\infty  \tr_e (e^{s\frac{\partial^2}{\partial^2 x}}\frac{\partial }{\partial x}  \psi_{2}')  \tr_e(e^{-sD_{Z_1}^2}-e^{-sD_{Y\times \mathbb{R}_+}^2})   ds.
	\end{eqnarray*}
	However, the kernel of $e^{s\frac{\partial^2}{\partial^2 x}}$ on $\mathbb{R}$ equals
	\[
	\kappa_{e^{s\frac{\partial^2}{\partial^2 x}}}(u,u'):=\frac{1}{\sqrt{4\pi s}}e^{\frac{-|u-u'|^2}{4s}},
	\]
	while the kernel of
	$e^{s\frac{\partial^2}{\partial^2 x}}\frac{\partial }{\partial x}$ equals
	\[
	\kappa_{e^{s\frac{\partial^2}{\partial^2 x}}\frac{\partial }{\partial x}}(u,u'):=-\frac{1}{\sqrt{4\pi s}}\frac{|u-u'|}{4s}e^{\frac{-|u-u'|^2}{4s}}.
	\]
	Hence
	\[
	\tr_e (e^{s\frac{\partial^2}{\partial^2 x}} \psi_{2}')
	= \int_{-\infty}^\infty   \kappa_{e^{s\frac{\partial^2}{\partial^2 x}}}(x,x) \psi_{2}'dx
	= \frac{1}{\sqrt{4\pi s}}
	\]
	and
	\[
	\tr_e (e^{s\frac{\partial^2}{\partial^2 x}}\frac{\partial }{\partial x} \psi_{2}')
	= \int_{-\infty}^\infty  \kappa_{e^{s\frac{\partial^2}{\partial^2 x}}\frac{\partial }{\partial x}}(x,x) \psi_{2}'dx
	= 0.
	\]
	This completes the proof.
	\epr
	
	\begin{Lem}\label{lem:second term}
		We have
		\begin{enumerate}
			\item
			$\tr_e (\sigma(\psi_{1,2}'\psi_{2,2}(1-\chi_{\psi_{2,2}=1})+\psi_{1,2}\psi_{2,2}'(1-\chi_{\psi_{1,2}=1}))(Q_0(t)-Q_0'(t)))=0$
			\item
			$\tr_e (\sigma(\psi_{1,2}'(\chi_{Z_1\times \mathbb{R}_+}-\chi_{\psi_{2,2}=1}) + \psi_{2,2}'(\chi_{Z_2\times \mathbb{R}_+}- \chi_{\psi_{1,2}=1}))(Q_0(t)-Q_0'(t)))=0$
		\end{enumerate}
	\end{Lem}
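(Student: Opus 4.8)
The plan is to re-run the heat--kernel reduction from the proof of Lemma~\ref{lem:first term}, but now to iterate it through \emph{both} cylindrical directions of the double corner $C_0=Y\times\mathbb{R}_+\times\mathbb{R}_+$, with corner--normal coordinates $x_1,x_2$. In Lemma~\ref{lem:first term} the cutoff $\psi_{i,2}'$ depends on only one corner--normal coordinate, so the reduction terminates at a trace over the \emph{odd}-dimensional stratum $Z_i$ (or $Y\times\mathbb{R}_+$), which is why the non-trivial $\eta$-type integrand $\tr_e(e^{-sD_{Z_i}^2}D_{Z_i})$ survives. By contrast, each of the two cutoffs $F$ appearing in Lemma~\ref{lem:second term} genuinely depends on both $x_1$ and $x_2$, so its reduction must be carried out in both directions and therefore terminates at a trace over the \emph{even}-dimensional corner $Y$; there the integrand is a grading--preserving Clifford multiple of $e^{-sD_Y^2}D_Y$, which is off-diagonal and hence traceless. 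Since the two items differ only in the explicit $F$, and $F$ enters the final answer only through its integral along $x_1$ and $x_2$, I would present the argument once.

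Concretely: write $Q_0(t)-Q_0'(t)=\int_t^\infty e^{-sD_0^-D_0^+}D_0^-\,ds$; near the corner the metric is a product, so $D_0$ decomposes along $x_1$ exactly as in Lemma~\ref{lem:first term},
\[
D_0=\begin{bmatrix}0 & \tfrac{\partial}{\partial x_1}+D_{Y\times\mathbb{R}_+}\\[4pt] -\tfrac{\partial}{\partial x_1}+D_{Y\times\mathbb{R}_+} & 0\end{bmatrix},
\]
whence $e^{-sD_0^-D_0^+}D_0^-=e^{s\partial_{x_1}^2}e^{-sD_{Y\times\mathbb{R}_+}^2}\bigl(-\partial_{x_1}+D_{Y\times\mathbb{R}_+}\bigr)$, where $D_{Y\times\mathbb{R}_+}$ is the Dirac operator on $Y\times\mathbb{R}_+$ in the variable $x_2$. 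Plug this into $\tr_e\bigl(\sigma F\,(Q_0(t)-Q_0'(t))\bigr)$, restrict the smooth Schwartz kernel to the diagonal, and use --- as in Lemma~\ref{lem:first term} --- that $\kappa_{e^{s\partial_{x_1}^2}}(u,u)=\tfrac{1}{\sqrt{4\pi s}}$ while $\kappa_{e^{s\partial_{x_1}^2}\partial_{x_1}}(u,u)=0$. The $\partial_{x_1}$--term is killed and one is left with $\tfrac{1}{\sqrt{4\pi s}}$ times the $x_1$--integral of the $x_1$--dependent factor of $F$, times a $\tr_e$ over $Y\times\mathbb{R}_+$ of $e^{-sD_{Y\times\mathbb{R}_+}^2}D_{Y\times\mathbb{R}_+}$ weighted by the remaining, $x_2$--dependent, factor of $F$. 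Now repeat the identical step for $D_{Y\times\mathbb{R}_+}=c(dx_2)(\partial_{x_2}+D_Y)$: the $\partial_{x_2}$--term is killed and, since for the even-dimensional $Y$ the extra normal Clifford generator $c(dx_2)$ acts as the chirality $\sigma_Y$, one is left with a finite multiple of $\tr_e\bigl(\sigma_Y\,e^{-sD_Y^2}D_Y\bigr)$. This last trace vanishes because $e^{-sD_Y^2}D_Y$ reverses the $\mathbb{Z}_2$--grading of $S_Y$ while $\sigma_Y$ preserves it, so $\sigma_Y e^{-sD_Y^2}D_Y$ is off-diagonal. Summing over the summands of $F$ and integrating in $s$ gives $0$, which proves both (1) and (2).

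The step I expect to need the most care is the Clifford/grading bookkeeping: verifying that after integrating out the two Gaussian directions the residual operator on $Y$ really is $\sigma_Y$ (or, more generally, a grading--preserving factor) times $e^{-sD_Y^2}D_Y$ --- so that the trace genuinely vanishes --- rather than picking up a grading--reversing factor; equivalently, that the $\sigma$ appearing in $(Q_0-Q_0')\sigma$ contributes the corner chirality. One should also check that the two $x_i$--integrals of $F$ converge, so that the final answer is $(\text{finite})\cdot 0$ and not $\infty\cdot 0$; this is exactly why the cutoffs were arranged so that on $C_0$ one has $\psi_{i,2}(1-\chi_{\psi_{i,2}=1})=\psi_{i,2}-\chi_{\psi_{i,2}=1}$ and $\chi_{Z_j\times\mathbb{R}_+}-\chi_{\psi_{i,2}=1}=1-\chi_{\psi_{i,2}=1}$, each compactly supported in a corner--normal coordinate, the complementary factor being $\psi_{i,2}'$, whose integral along its coordinate is $1$. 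All the remaining estimates are the heat--kernel manipulations already carried out in Lemmas~\ref{lem:contribution of boundary} and~\ref{lem:first term}.
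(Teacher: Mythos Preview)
Your proposal is correct and follows essentially the same route as the paper: reduce $Q_0(t)-Q_0'(t)$ to $\int_t^\infty e^{-sD_0^-D_0^+}D_0^-\,ds$, separate variables in both corner-normal directions of $C_0=Y\times\mathbb{R}_+^2$, kill the $\partial_{x_i}$ contributions via $\kappa_{e^{s\partial^2}\partial}(u,u)=0$, and observe that the residual $D_Y$-term is off-diagonal with respect to the grading and hence traceless. The only organizational difference is that the paper carries out the two separations simultaneously, writing $D_0^\pm$ explicitly as $2\times 2$ matrices in $\partial_x+i\partial_y$ and $D_Y^\pm$, which makes the grading bookkeeping you flag as delicate entirely transparent; your iterated reduction arrives at the same vanishing but requires the extra check that $c(dx_2)$ acts as the $Y$-chirality.
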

	\bpr
	
	Actually we have
	\begin{equation}\label{eq the first of 4}
		\tr_e(\sigma\psi_{1,2}'\psi_{2,2}(1-\chi_{\psi_{2,2}=1}) (Q_0(t)-Q_0'(t) ) )=0
	\end{equation}
	\begin{equation}\tr_e(\sigma\psi_{1,2}\psi_{2,2}'(1-\chi_{\psi_{1,2}=1}) (Q_0(t)-Q_0'(t)) )=0
	\end{equation}
	\begin{equation} \tr_e(\sigma\psi_{1,2}'(\chi_{Z_1\times \mathbb{R}_+}-\chi_{\psi_{2,2}=1} ) (Q_0(t)-Q_0'(t)) )=0   \end{equation}
	\begin{equation}
		\tr_e( \sigma\psi_{2,2}'(\chi_{Z_2\times \mathbb{R}_+}- \chi_{\psi_{1,2}=1})(Q_0(t)-Q_0'(t))  )=0
	\end{equation}
	We will prove Equation \eqref{eq the first of 4} only, the other three are totally parallel.
	
	%Note that both $\psi_{1,2}'\psi_{2,2}(1-\chi_{\psi_{2,2}=1})+\psi_{1,2}\psi_{2,2}'(1-\chi_{\psi_{1,2}=1})$ and $\psi_{1,2}'(\chi_{Z_2\times \mathbb{R}_+}-\chi_{\psi_{2,2}=1}) + \psi_{2,2}'(\chi_{Z_1\times \mathbb{R}_+}- \chi_{\psi_{1,2}=1})$ are supported on $ Y\times [0,100]\times [0,100]$.
	
	% Thus it is sufficient to show
	%\[
	%\tr_e (  (Q_0(t)-Q_0'(t) )\chi_{Y\times [0,100]\times [0,100] }  )=0.
	%\]
	Now, we have
	\[
	\tr_e (  (Q_0(t)-Q_0'(t))\sigma \psi_{1,2}'\psi_{2,2}(1-\chi_{\psi_{2,2}=1})  )=\int_t^{\infty}\tr_e  (e^{-sD_0^{-}D^+_0}D^-_0 \sigma \psi_{1,2}'\psi_{2,2}(1-\chi_{\psi_{2,2}=1}) )ds.
	\]
	By definition, we have
	\begin{eqnarray*}
		D_0^-&=&\begin{bmatrix}
			\frac{\partial}{\partial x} +i\frac{\partial }{\partial y} & D_Y^{-} \\
			-D_Y^{+} & -\frac{\partial}{\partial x} +i\frac{\partial }{\partial y}
		\end{bmatrix}\\
		D_0^+&=& \begin{bmatrix}
			-\frac{\partial}{\partial x} +i\frac{\partial }{\partial y} & -D_Y^-\\
			D_Y^+ & \frac{\partial}{\partial x} +i\frac{\partial }{\partial y}
		\end{bmatrix}.
	\end{eqnarray*}
Direct computation shows
	\begin{eqnarray*}
		&&\tr_e  (e^{-sD_0^{-}D^+_0}D^-_0 \sigma \psi_{1,2}\psi_{2,2}'(1-\chi_{\psi_{1,2}=1}) )\\
		&=& \tr_e(e^{-s\frac{\partial^2}{\partial^2 x}} \frac{\partial }{\partial x}   \psi_2' )\tr_e(e^{-s\frac{\partial^2}{\partial^2 y} } \sigma \psi_2 (1-\chi_{\psi_2=1}) )\tr_e ( \begin{bmatrix}e^{-s D_Y^- D_Y^+} &0 \\ 0 & e^{-s D_Y^+ D_Y^-}  \end{bmatrix} )\\
		&&+ \tr_e(e^{-s\frac{\partial^2}{\partial^2 x} } \psi_2')\tr_e(e^{-s\frac{\partial^2}{\partial^2 y} } \frac{\partial}{\partial y}  \psi_2 (1-\chi_{\psi_2=1}))\tr_e ( \begin{bmatrix}e^{-s D_Y^- D_Y^+} &0 \\ 0 & e^{-s D_Y^+ D_Y^-}  \end{bmatrix} )\\
		&&+ \tr_e(e^{-s\frac{\partial^2}{\partial^2 x} } \psi_2')\tr_e(e^{-s\frac{\partial^2}{\partial^2 y} } \psi_2 (1-\chi_{\psi_2=1})  )\tr_e( \begin{bmatrix}0 & e^{-s D_Y^- D_Y^+} D^{-}_Y\\   - e^{-s D_Y^+ D_Y^-}D_Y^+ &0 \end{bmatrix} )
	\end{eqnarray*}
	In the proof of Lemma \ref{lem:first term}, we have shown that
	\[
	\tr_e(e^{-s\frac{\partial^2}{\partial^2 x}} \frac{\partial}{\partial x}  \psi_2'  )=\tr_e(e^{-s\frac{\partial^2}{\partial^2 y} } \frac{\partial}{\partial y} \psi_2 (1-\chi_{\psi_2=1}))=0.
	\]
	At the same time, we have
	\[
	\tr_e ( \begin{bmatrix}0 & e^{-s D_Y^- D_Y^+} D^{-}_Y\\   - e^{-s D_Y^+ D_Y^-}D_Y^+ &0 \end{bmatrix} )=0.
	\]
	The proof is completed.
	\epr
	
	In a word, by the above argument and Lemmas \ref{lem:first term} and \ref{lem:second term}, we have
	\begin{Thm}\label{thm:muller thm}
		The following equality holds:
		\begin{eqnarray*}
			\tr_e (\ind_G D_M)& =& \int_{M} I(e)
			- \lim_{t\to 0} \frac{1}{\sqrt{4\pi }} \int_t^\infty \tr_e ( e^{-sD_{Z_1}^2} D_{Z_1} - e^{-sD_{Y\times \mathbb{R}_+}^2} D_{Y\times \mathbb{R}_+}) \frac{1}{\sqrt{s}} ds \\
			&&- \lim_{t\to 0}\frac{1}{\sqrt{4\pi }} \int_t^\infty \tr_e ( e^{-sD_{Z_2}^2} D_{Z_2} - e^{-sD_{Y\times \mathbb{R}_+}^2} D_{Y\times \mathbb{R}_+}) \frac{1}{\sqrt{s}} ds.
		\end{eqnarray*}
	\end{Thm}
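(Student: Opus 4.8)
The plan is to follow the proof of Theorem~\ref{thm:formula for delocalized eta} essentially line by line, carrying along the extra corner contribution coming from $C_0$. The starting point is already in place: we have recorded that $\tr_e(\ind_G(D_M)) = \tr_e(S_0^2(t)) - \tr_e(S_1^2(t))$ for every $t$, and that the verbatim analogue of Lemma~\ref{lem:eliminate square} reduces this to $\tr_e(S_0(t)) - \tr_e(S_1(t))$. So it suffices to compute $\lim_{t\to 0^+}[\tr_e(S_0(t)) - \tr_e(S_1(t))]$, and, exactly as in the proof of Theorem~\ref{thm:formula for delocalized eta}, we split it as $\lim_{t\to 0^+}[\tr_e(S_0'(t)) - \tr_e(S_1(t))] + \lim_{t\to 0^+}[\tr_e(S_0(t)) - \tr_e(S_0'(t))]$.

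For the first summand, the argument of Lemma~\ref{lem:contribution of inner} applies without change: using the explicit expansions of $S_0'(t)$ and $S_1(t)$ together with $[D_X^+,\varphi_{i,j}]\psi_{i,j}=0$, every term supported on a collar or on the corner cancels, leaving $\tr_e\big((1-Q(t)D_X^+)\psi_{1,1}\psi_{2,1}\big) - \tr_e\big((1-D_X^+Q(t))\psi_{1,1}\psi_{2,1}\big)$, whose $t\to 0^+$ limit is the interior local index integral $\int_M I(e)=\int_M \hat A(M)$ by the standard heat-kernel computation. This is the identity already stated just before Lemma~\ref{lem:first term}.

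For the second summand, the computation carried out above expresses $\lim_{t\to 0^+}\tr_e(S_0(t)-S_0'(t))$ as the sum of two \emph{face} contributions --- one involving $Q_1(t)-Q_1'(t)$ corrected by $Q_0(t)-Q_0'(t)$ restricted to $Z_2\times\mathbb{R}_+$, and symmetrically one for $Q_2(t)-Q_2'(t)$ --- plus several \emph{corner} contributions of the form $\tr_e(\sigma(\cdots)(Q_0(t)-Q_0'(t)))$. Lemma~\ref{lem:second term} shows every corner contribution vanishes, using the product structure of $D_0$ along $C_0=Y\times\mathbb{R}_+^2$: the fibrewise trace over $Y$ kills the off-diagonal $D_Y$ term, and the diagonal value of the kernel of $e^{s\partial_x^2}\partial_x$ (equivalently $e^{s\partial_y^2}\partial_y$) is zero. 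Lemma~\ref{lem:first term}, itself the corner-adapted version of Lemma~\ref{lem:contribution of boundary}, identifies each face contribution with $-\tfrac{1}{\sqrt{4\pi}}\int_t^\infty s^{-1/2}\tr_e\big(e^{-sD_{Z_i}^2}D_{Z_i}-e^{-sD_{Y\times\mathbb{R}_+}^2}D_{Y\times\mathbb{R}_+}\big)\,ds$ for $i=1,2$. Adding the contributions of the three steps gives precisely the stated formula.

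The main obstacle is not any single step but the analytic bookkeeping that makes all of the above legitimate: one needs $S_0,S_1,S_0'$ and their squares to be $e$-trace class with smooth Schwartz kernels (this uses bounded geometry and the uniform lower bound on scalar curvature along all boundary pieces of $M$, and is asserted in the text), the finite-propagation cancellations used inside the proofs of Lemmas~\ref{lem:contribution of boundary} and \ref{lem:first term} to hold on the corner manifold, and the $s$-integrands $\tr_e\big(e^{-sD_{Z_i}^2}D_{Z_i}-e^{-sD_{Y\times\mathbb{R}_+}^2}D_{Y\times\mathbb{R}_+}\big)$ to decay fast enough as $s\to\infty$ while remaining integrable against $s^{-1/2}$ near $s=0$ --- both of which follow from the uniform invertibility of the Dirac operators in the normal directions of the collars and the usual heat-kernel asymptotics. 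Once these are granted, Theorem~\ref{thm:muller thm} is simply the sum of the interior term, Lemma~\ref{lem:first term}, and Lemma~\ref{lem:second term}.
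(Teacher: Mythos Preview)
Your proposal is correct and follows essentially the same approach as the paper: the paper likewise reduces $\tr_e(\ind_G D_M)$ to $\tr_e(S_0)-\tr_e(S_1)$ via the square-elimination lemma, splits off the interior contribution $\tr_e(S_0')-\tr_e(S_1)\to\int_M I(e)$ by the argument of Lemma~\ref{lem:contribution of inner}, and then identifies $\tr_e(S_0)-\tr_e(S_0')$ as the two face terms of Lemma~\ref{lem:first term} once the corner terms are shown to vanish by Lemma~\ref{lem:second term}. Your added paragraph on the analytic prerequisites (trace-class properties, finite propagation, integrability in $s$) makes explicit what the paper leaves implicit, but the logical skeleton is identical.
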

	
	%\begin{Cor}
	%We have
	%\[
	%\eta_{\infty,g}(D_X)= \eta_{\partial_1 M, Y,g}(D_{\partial_1 M}) + \eta_{\partial_2 M, Y,g}(D_{\partial_2 M})
	%\]
	%\end{Cor}

\bibliography{invertible_at_infinity}

\end{document}